\newtheorem{theorem}{Theorem}
\newtheorem{corollary}{Corollary}
\newtheorem{lemma}{Lemma}
\newtheorem{proposition}{Proposition}
\newtheorem{remark}{Remark}
\newtheorem{definition}{Definition}
\newcommand{\E}{\mathbb{E}}
\renewcommand{\d}{\mathrm{d}}
\renewcommand{\P}{\mathbb{P}}
\newcommand{\Q}{\mathbb{Q}}
\renewcommand{\i}{\mathrm{i}}
\renewcommand{\hat}{\widehat}
\newcommand{\Var}{\mathrm{Var}}
\newcommand{\TV}{\mathrm{TV}}
\newcommand{\KL}{\mathrm{D_{KL}}}
\newcommand{\uniform}{\mathrm{uniform}}
\newcommand{\Binom}{\mathrm{binom}}
\newcommand{\const}{\mathrm{const}}
\newcommand{\fa}{\mathrm{fa}}
\newcommand{\miss}{\mathrm{miss}}
\newcommand{\ones}[1]{\mathbf{1}\left\{#1\right\}}
\newcommand{\stepa}[1]{\stackrel{\mathrm{(a)}}{#1}}
\newcommand{\stepb}[1]{\stackrel{\mathrm{(b)}}{#1}}
\title{Detecting Planted Structure in Circular Data}
\author{
    Taha Ameen and Bruce Hajek 
    \thanks{
        The authors are with the Department of Electrical and Computer Engineering and the Coordinated Science Laboratory, University of Illinois, Urbana, IL 61801, USA. E-mails: \{ \texttt{tahaa3, b-hajek} \} \texttt{@illinois.edu} 
    }
}
\date{}
\begin{document}
\maketitle

\begin{abstract}
    Hypothesis testing problems for circular data are formulated, where observations take values on the unit circle and may contain a hidden, phase-coherent structure. Under the null, the data are independent uniform on the unit circle; under the alternative, either (i) a planted subset of size $K$ concentrates around an unknown phase (the flat setting), or (ii) a planted community of size $k$ induces coherence among the edges of a complete graph (the community setting). In each of the two settings, two circular signal distributions are considered: a hard-cluster distribution, where correlated planted observations lie in an arc of known length and unknown location, and a von Mises distribution, where correlated planted observations follow a von Mises distribution with a common unknown location parameter. For each of the four resulting models, nearly matching necessary and sufficient conditions are derived (up to constants and occasional logarithmic factors) for detectability, thereby establishing information-theoretic phase transitions. 
\end{abstract}


\tableofcontents

\section{Introduction} \label{sec-introduction}

Many modern data sets are inherently directional, taking values on the unit circle rather than the real line. Examples include phases of oscillatory signals, times of day or year (viewed modulo 24 hours or one year), headings of moving objects, and phase differences between pairs of time series. In such settings, a natural statistical question is whether observed angles are unstructured (compatible with a uniform distribution on the circle), or whether there is a hidden subset that exhibits a preferred direction or phase.

This paper studies circular  data in the presence of planted structure. Under the null hypothesis, all observations are i.i.d. uniform on the circle, while under the alternative, there is a small unknown subset of indices (or community of vertices in a network) whose angles concentrate around an unknown phase. These hypothesis testing problems combine two classical themes: (i) circular statistics, where the von Mises distribution serves as a ``Gaussian on the circle''~\cite{mardia2009directional}, and (ii) planted structure and phase transitions in high-dimensional detection, such as planted clique and community detection in stochastic block models.

Our goal is to characterize when detection is information-theoretically possible in directional planted models. We describe three application domains where such problems arise.
\begin{enumerate}
    \item \textit{Seasonal and diurnal effects in event data.} 
    Consider time-stamped events (transactions, failures, attacks, messages) where $X_i\in[0,2\pi)$ denotes the time of the $i$th event (modulo 24 hours or one year). Many applications exhibit background activity spread uniformly across all times, alongside a smaller subset of events driven by a seasonal or diurnal mechanism (failures that concentrate during hot summer days, or cyber-attacks that concentrate at night). Under a natural null model, all event times are i.i.d.\ uniform on $[0,2\pi)$; under the alternative, an unknown subpopulation of size $K$ has times that concentrate in an unknown arc of length $2\pi\tau$ (for $\tau < 1$), or around an unknown preferred phase with concentration $\kappa$. Here $\tau$ or $\kappa$ quantifies the strength of concentration, and $K$ the size of the affected population. 
    \item \textit{Phase-locked oscillators.} 
    Suppose we measure the instantaneous phases $X_1,\dots,X_N\in[0,2\pi)$ of $N$  oscillators (neurons in a neural population, devices in a power grid, or Kuramoto oscillators modeling synchronization phenomena). When oscillators are fully desynchronized, their phases are close to uniform on the circle. However, when a subset of $K$ oscillators is driven by a common source, their phases become localized near a common direction. The strength of this localization can be modeled by a concentration parameter $\kappa$, with the von Mises distribution providing a canonical alternative to uniformity. 
    \item \textit{Phase-coherent communities in networks.} 
    Consider a network of $n$ time series (EEG channels, financial assets, or sensor readings). At a given frequency, the pairwise relationship between series $i$ and $j$ can be summarized by their cross-spectral coherence, a complex-valued quantity whose phase $X_{ij}\in[0,2\pi)$ encodes the phase offset between the two signals. This yields a matrix $(X_{ij})_{1\leq i<j\leq n}$ of angular measurements on the edges of the complete graph. Under unstructured interactions, these edge phases are approximately uniform. However, if a community $C^*$ of $k$ nodes is driven by a common oscillatory source (e.g. a shared rhythm in sensor networks), then the phases $X_{ij}$ for $i,j\in C^*$ become aligned around some offset $\Theta^*$ and exhibit significantly greater coherence than background edges. 
\end{enumerate}
The task is to determine when a phase-coherent subset (or community) can be detected from the observed circular data, as a function of the population size ($N$ or $n$), the subset size ($K$ or $k$), and the concentration strength ($\tau$ or $\kappa$).

\subsection{Models, tests and objectives}
A random variable $X$ is said to have the von~Mises distribution with location parameter $\Theta^*\in[0,2\pi)$
and concentration parameter $\kappa>0$, denoted $X\sim \mathrm{vonMises}(\Theta^*,\kappa)$, if it has density
\[
    f(\theta;\Theta^*,\kappa)
    = \frac{1}{2\pi I_0(\kappa)}\exp\!\big(\kappa \cos(\theta-\Theta^*)\big) \, ,
    ~~~~~~ \theta\in[0,2\pi) \, ,
\]
where $I_0$ is the modified Bessel function of the first kind of order $0$,
\(
    I_0(\kappa)=\frac{1}{2\pi}\int_{0}^{2\pi}\exp\!\big(\kappa\cos u\big)\, \d u \, .
\)

We introduce four hypothesis testing problems below, with model parameters listed after the model names. In all models, the null hypothesis $H_0$ assumes directional homogeneity (i.i.d.\ uniform data on the circle), while the alternative $H_1$ introduces a structured component.

\begin{enumerate}
    \item \textbf{Hard-cluster flat model ($N,K,\tau$).} Angles $X_1,\dots,X_N\in[0,2\pi)$ are observed. Under $H_0$, the $X_i$ are independent and uniform on $[0,2\pi)$. Under $H_1$, there is an unknown planted subset $S^*\subset[N]$ of size $K$ and an unknown arc location $\Theta^*\in[0,2\pi)$ such that
    \[
        X_i \sim \mathrm{Uniform}([\Theta^*,\Theta^* + 2\pi\tau]) ~~~~ \mbox{for } i\in S^*,
        \qquad
        X_i \sim \mathrm{Uniform}([0,2\pi]) ~~~~ \mbox{for } i\notin S^* \, ,
    \]
    independently. This captures a hard cluster of events or oscillators localized in an unknown window of width $2\pi\tau$ on the circle.

    \item \textbf{Von Mises flat model ($N,K,\kappa$).} Angles $X_1,\dots,X_N\in[0,2\pi)$ are observed, with the same null $H_0$ as in the hard-cluster flat model. Under $H_1$, there is a planted subset $S^*$ of size $K$ and a preferred phase $\Theta^*$ such that
    \[
        X_i \sim \mathrm{vonMises}(\Theta^*,\kappa) ~~~~ \mbox{for } i\in S^* \, ,
        \qquad
        X_i \sim \mathrm{Uniform}([0,2\pi]) ~~~~ \mbox{for } i\notin S^*,
    \]
    independently. 

    \item \textbf{Hard-cluster community model ($n,k,\tau$).} A symmetric matrix of angles $(X_{ij})_{1\leq i, j\leq n}$ is observed, viewed as angular labels on the edges of the complete graph on $n$ vertices. Under $H_0$, the $X_{ij}$ for $i<j$ are independent uniform on $[0,2\pi)$. Under $H_1$, there is an unknown community $C^*\subset[n]$ of size $k$ and an unknown phase $\Theta^*$ such that
    \[
        X_{e} \sim \mathrm{Uniform}([\Theta^*,\Theta^* + 2\pi\tau]) ~~~~ \mbox{for } e\in E(C^*) \, ,
    \]
    where $E(C) \triangleq \{e = \{i,j\}: i,j\in C,~i<j\}$ is the set of all edges with both end-points in $C$, and the angles for all edges with at least one endpoint outside $C^*$ remain uniform on $[0,2\pi)$. 

    \item \textbf{Von Mises community model ($n,k,\kappa$).} A symmetric matrix of angles $(X_{ij})_{1\leq i,j\leq n}$ is observed. Under $H_0$ the $X_{ij}$ for $i<j$ are independent uniform on $[0,2\pi)$. Under $H_1$, there is an unknown community $C^*\subset [n]$ of size $k$ and an unknown phase $\Theta^*$ such that
    \[
        X_{e} \sim \mathrm{vonMises}(\Theta^*,\kappa) ~~~~ \mbox{for } e\in E(C^*) \, ,
    \]
    whereas all other edges remain uniform on $[0,2\pi)$.
\end{enumerate}

In each model, under $H_1$ it is assumed that the planted set $S^*$ (respectively $C^*$) is drawn uniformly at random among all subsets of size $K$ (respectively $k$), and that the planted phase $\Theta^*$ is drawn independently from the uniform distribution on $[0,2\pi)$.  The law of the observations under $H_1$, denoted by $\P$, is then the resulting mixture over $(S^*,\Theta^*)$ (respectively $(C^*,\Theta^*$)), and $\Q$ denotes the corresponding law under $H_0$.

The four models above are studied in parallel because they represent related manifestations of directional structure.  The hard-cluster and von~Mises models correspond to two canonical ways of modeling concentration on the circle, and in the regime of small window widths $\tau$ or large concentration parameters $\kappa$ they are closely related, with the rough correspondence $\tau \approx \kappa^{-1/2}$.  Likewise, the flat and community models differ primarily in their combinatorial structure, while sharing many of the same probabilistic mechanisms and bounding techniques.  We study these models side by side to highlight which phenomena are specific to a particular geometry and which are intrinsic to directional planted structure more broadly.

We now formalize the detection problem. A \textit{test} is a measurable function $\phi$ mapping data to $\{0,1\}$, where $\phi(X)=1$ means rejecting $H_0$ in favor of $H_1$.  The \emph{false alarm} and \emph{miss} probabilities are defined by
\[
    p_{\fa}(\phi) \,\triangleq\, \Q\{\phi(X)=1\} \, ,
    \qquad
    p_{\miss}(\phi) \,\triangleq\, \P\{\phi(X)=0\} \, .
\]
We will be interested in the following detection regimes as $n$ (or $N$) tends to infinity.

\medskip 

\begin{definition}[Strong and weak detection]
\label{def:strong_weak_detection}
\hfill
\begin{itemize}
    \item \emph{Strong detection is possible} if there exists a sequence of tests $(\phi_n)$ such that $p_{\fa}(\phi_n) \to 0$ and $p_{\miss}(\phi_n) \to 0$.

    \item \emph{Weak detection is possible} if there exists a sequence of tests $(\phi_n)$ and a constant $\delta>0$ such that $p_{\fa}(\phi_n) + p_{\miss}(\phi_n) \leq 1 - \delta$ for all sufficiently large $n$.

    \item \emph{Weak detection is impossible} if for every sequence of tests $(\phi_n)$, $p_{\fa}(\phi_n) + p_{\miss}(\phi_n) \geq 1 - o(1)$. Equivalently, weak detection is impossible if the total variation distance between $\P$ and $\Q$ tends to zero.
\end{itemize}
\end{definition}

\paragraph{Interval and coherence tests}
We propose and analyze natural testing procedures tailored to directional planted structure. For the flat models, we study an interval test with window length $2\pi\tau$ and threshold $\gamma$. Given observations $X_1,\dots,X_N\in[0,2\pi)$, define the statistic 
\begin{align} \label{eq: interval-statistic}
    T_{\mathrm{int}}(X)
    \triangleq
    \sup_{\theta \in [0,2\pi)}
    \sum_{i=1}^N \mathbf{1}\{X_i \in [\theta,\theta + 2\pi\tau]\} .
\end{align}
The interval test for the flat models with parameters $(\tau, \gamma)$  rejects $H_0$ if $T_{\mathrm{int}}(X) \geq \gamma$.

Similarly, for the community models, we study an interval test with window length $2\pi\tau$. Given observations $(X_{ij})_{1 \leq i < j \leq n}$, consider the statistic
\begin{align} 
    T_{\mathrm{int}}^{\mathrm{comm}}(X) \triangleq  \sup_{\theta \in [0,2\pi)}  \,\, \sup_{C\subseteq [n] : |C| = k} 
    \mathbf{1}\left\{ X_e \in [\theta, \theta + 2\pi\tau] \, \mbox{ for all}~ e \in E(C)\right\} \,.
\end{align}
The community interval test with parameters $(k,\tau)$ rejects $H_0$ if $T_{\mathrm{int}}^{\mathrm{comm}}(X) =1$.

For the von Mises community model, we also study a coherence test. For each edge $e = \{i,j\}$, let $Z_e = e^{\mathrm{i} X_{e}}$ (with $\mathrm{i} \triangleq \sqrt{-1}$), and define the coherence statistic
\begin{align}
    T_{\mathrm{coh}}(X)
    \triangleq 
    \max_{C\subset[n]:\,|C|=k} \, \bigg|\sum_{e\in E(C)} Z_{e}\bigg| \, ,
\end{align}
The coherence test with parameters $(k,\beta)$ rejects $H_0$ if $T_{\mathrm{coh}}(X) \geq \beta$.

The interval and coherence tests correspond to  generalized likelihood ratio tests for the hard-cluster and von Mises family of detection problems, respectively. A derivation is presented in Appendix~\ref{app: GLRT}. 
The coherence test is related to the Rayleigh test in circular statistics, which compares the sum of $Z_e$ over \emph{all} edges $e$ to a threshold. This statistic, although efficiently computable, performs worse than the coherence test at detecting planted communities; see Appendix~\ref{app: poly-time}. 
Finally, the interval and coherence tests are themselves related; see Appendix~\ref{app-relationship-test}.

\subsection{Related work}

\paragraph{Directional statistics}
Directional statistics concerns data on circles or spheres (angles, phases), and has well-developed tools for modeling and inference~\cite{mardia2009directional, fisher1995statistical}. A central problem in this area is testing circular uniformity: given i.i.d.\ angles $\theta_1,\dots,\theta_n\in[0,2\pi)$, decide whether they are consistent with $\mathrm{uniform}([0,2\pi))$ or exhibit some systematic departure. Classical tests include the classical Rayleigh test~\cite{rayleigh1880resultant} (related to our coherence test), as well as Kuiper’s test~\cite{kuiper1960tests} and Watson’s test~\cite{watson1961goodness} (comparing empirical and uniform CDFs), and Rao’s spacing test~\cite{rao1976tests} (based on gaps between ordered angles). These methods (and their Bayesian counterparts~\cite{landler2018circular,mulder2021bayesian}) test goodness-of-fit in the homogeneous setting where all observations are drawn from a single common distribution, and the goal is to detect any global non-uniformity. A related contamination model is studied in~\cite{bentley2006modelling}, which fits a mixture of von Mises and uniformly distributed data, and uses likelihood ratio and Watson-type goodness-of-fit procedures to choose between uniform, von Mises, and mixture fits. Here, each observation is drawn i.i.d.\ from the same mixture.

In contrast, our setting differs in two ways.  First, under the alternative only a small, unknown subset of observations (or edges, in the community models) is structured, while the majority remain uniform. Second, we focus on high-dimensional regimes in which the problem size and planted subset size grow, and we ask for information-theoretic thresholds that separate detectability from impossibility. Classical uniformity tests typically have little power in such sparse planted regimes, where the global empirical distribution can remain near-uniform despite the coherent subset.

Beyond hypothesis testing, scan statistics have been studied in other contexts as well. In the circular setting, the scan statistic is the maximum number of observations contained in any arc of a given length as the arc sweeps around the circle.  Foundational work of Cressie~\cite{cressie1977scan,cressie1980asymptotic} analyzes the distribution of this statistic under circular uniformity, and broader developments and extensions are reviewed in the monograph~\cite{glaz2005scan}.  Scan statistics are also used in applications with periodic structure (e.g., seasonality), where one searches for a localized pulse of events by scanning a window around the year~\cite{wallenstein1989pulse, wallenstein1993power}. 

\paragraph{Planted structures in non-circular data} From a high-dimensional testing viewpoint, our models belong to the broad class of problems with planted structure, where a low-complexity signal (a sparse subset, a community, or a low-rank perturbation) is embedded in a high-dimensional noise background. Canonical examples of such problems include planted clique~\cite{jerrum1992large, alon1998finding, feige2000finding}, sparse PCA~\cite{dAspremont2004direct, johnstone2009consistency, deshpande2014information}, and sparse submatrix detection~\cite{butucea2013submatrix}; see Wu and Xu~\cite{wu2021planted} for a recent survey of information-theoretic and computational thresholds in such models. Particularly relevant in the graph setting are detectability and recovery thresholds for community detection in sparse stochastic block models, e.g.~\cite{mossel2018proof, massoulie2014community, banks2016information}, and the survey~\cite{abbe2018community}. As a special case, the \textit{planted dense subgraph} problem studies the detection of an anomalously dense vertex subset embedded in an otherwise homogeneous random graph, and has been analyzed from both statistical and computational perspectives~\cite{arias2014community, verzelen2015community, deshpande2015finding, hajek2017information}. Our work can be viewed as a directional analog of these models: we replace scalar observations (Bernoulli or Gaussian) with circular valued measurements, and characterize the corresponding detection and impossibility regimes for planted clusters and communities. Our community models also differ from stochastic
block models for a planted dense community in that for our models the
marginal distributions of the edge labels are the same under
either hypothesis.

A related line of work studies statistical--computational gaps in planted sparse structure problems, where detection is information-theoretically possible but conjecturally hard for polynomial time algorithms. For instance, Berthet and Rigollet~\cite{berthet2013complexity} gives evidence for such gaps for sparse PCA via average case reductions from planted clique, and Brennan, Bresler, and Huleihel~\cite{brennan2018planted} broadens this reduction-based program to obtain tight conditional lower bounds for several canonical planted models. Our focus here is on information-theoretic detectability and impossibility in directional planted models; extending these reduction frameworks to circular-valued observations is a natural direction for future work.

\subsection{Summary and discussion of main results}

\paragraph{Information-theoretic detection thresholds} 
For each of the four models, we derive sharp conditions (up to constants and occasional logarithmic factors) on the triplets $(N,K,\tau)$, $(N,K,\kappa)$, $(n,k,\tau)$, and $(n,k,\kappa)$ that separate detectability from impossibility. Table~\ref{tab:recovery_conditions} summarizes these results for specific regimes, with more general conditions for achievability and impossibility listed in the theorem statements. (Functions $A(\kappa)$ and $R(\kappa)$ are defined in Appendix~\ref{app:function_bounds}.) 
Our results assume that the decision maker does not know the planted phase $\Theta^*$; see Appendix~\ref{app: knowing-theta} for a discussion of how access to $\Theta^*$ can improve performance in the flat case, and Appendix~\ref{app: SBM} for a discussion on the connection to stochastic block models in the community case.    

\begin{table}[t]
    \centering
    \renewcommand{\arraystretch}{1.5}
    \setlength{\tabcolsep}{9pt}
    \caption{Summary of necessary and sufficient conditions for detection}
    \label{tab:recovery_conditions}
    \vspace{1 em}
    \begin{tabular}{@{}llll@{}}
        \toprule
        \textbf{Model} & \textbf{Regime} & \textbf{Impossibility} & \textbf{Achievability} \\ \midrule
        
        \multirow{3}{*}{\shortstack[l]{\textbf{Hard-cluster}\\\textbf{flat}\\($N, K, \tau$)}} 
        & $K$ constant & $\tau = \omega\big(N^{-1 - \frac{1}{K-1}}\big)$ & $\tau = o\big(N^{-1 - \frac{1}{K-1}}\big)$ \\
        & $K = N^\alpha, \alpha \le 1/2$ & $\tau = \frac{K^2}{c N \log N}, \ c < 1-2\alpha$ & $\tau = \frac{K^2}{c N \log N}, \ c > 2$ \\ 
        & $K = N^\alpha, \alpha > 1/2$ & --- &  $\tau \in (0,1-\epsilon]$, $\epsilon > 0$ \\ \midrule
        
        \multirow{2}{*}{\shortstack[l]{\textbf{Von Mises}\\\textbf{flat}\\($N, K, \kappa$)}} 
        & $K = N^\alpha, \alpha \le 1/2$ & $\frac{1}{\sqrt{\kappa}} \ge \frac{K^2}{c N \log N}$ ($c<\frac{1-2\alpha}{2\sqrt{\pi}}$) & $\frac{1}{\sqrt{\kappa}} \le \frac{K^2}{c N \log N}$ ($c>0.5057$) \\
        & $K = N^\alpha, \alpha > 1/2$ & --- & $\kappa \geq\epsilon$, $\epsilon > 0$  \\ \midrule
        
        \multirow{4}{*}{\shortstack[l]{\textbf{Hard-cluster}\\\textbf{community}\\($n, k, \tau$)}} 
        & $3\leq k \leq n$ & --- & $\tau \leq \left( \frac k {ne} \right)^{\frac{(1+\epsilon)k}{\binom k 2 -1}}$\\
        & $k = c\log n$ & $\tau > e^{-2/c}$ &  $\tau \leq  e^{-2/c}$ \\
        & $k = o(\sqrt{n})$ & $\tau \ge \exp\left(-\frac{(2-\epsilon)\log(n/k^2)}{k-1}\right)$ & $\tau \leq \left( \frac k {ne} \right)^{\frac{(1+\epsilon)k}{\binom k 2 -1}}$\\
        & $k = \omega(\log{n})$ & $\tau \geq 1 - \frac{2(1-\epsilon)}{k-1}\log\left(1 + \frac{\epsilon_n n}{k^2}\right)$
        & $\tau \leq 1 - \frac{2(1+\epsilon)}{k-1}\log \frac{ne}{k}$  \\
        
        \midrule 
        
        \multirow{3}{*}{\shortstack[l]{\textbf{Von Mises}\\\textbf{community}\\($n, k, \kappa$)}} 
        & $3/\epsilon < k = o(\log n)$ & $\kappa \leq \left(\frac{n}{k^2}\right)^{\frac{4-\epsilon}{k-1}}$ & $\kappa \! \geq \! \left(\frac{4}{\pi^2} \!+\! \epsilon\right)  \!  \left(\frac{n}{k}\right)^{\frac{4(1+\epsilon)}{k-1}} \log k$ \\[0.5em]
        & $k=c\log n$     &  $c \log R(\kappa) < 2$ & \parbox[t]{0.24\linewidth}{\raggedright $c\, A^2(\kappa) > 2$ and $c > 2$, \\[0.3em] or $\kappa \geq \frac{2\log k}{1-\cos(\pi \exp(-2/c) )}$} \\[2.5em]
        & $k = \omega(\log n)$  & \parbox[t]{0.22\linewidth}{\raggedright $\kappa \leq \sqrt{\frac{4(1-2c-\epsilon)\log n}{k-1}}$, \\[0.2em] $k \leq n^c$ and $c < 1/2$} & $\kappa \geq (1+\epsilon)\sqrt{\frac{8\log n}{k-1}}$ \\
        \bottomrule
    \end{tabular}
\end{table}

A recurring theme in our results is a {small $K$ versus large $K$} dichotomy. For the {flat} models, there is a qualitative change around $K =  \sqrt{N}$: when $K=o(\sqrt{N})$, the planted subset is sufficiently sparse that detection requires the directions within the cluster to become increasingly concentrated (equivalently, $\kappa\to\infty$ in the von~Mises model, or $\tau\to 0$ in the hard-cluster model), whereas when $K=\omega(\sqrt{N})$ the number of planted observations makes detection possible even at {constant} concentration (e.g., $\kappa=\Theta(1)$ and a window width $\tau$ bounded away from $0$ and $1$).
In contrast, for the {community} models the corresponding transition occurs for much smaller correlated sets, around $k = \log n$: if $k=o(\log n)$ then strong detection forces the edge-level concentration to strengthen with $n$ (again $\kappa\to\infty$ or $\tau\to 0$), while if $k=\omega(\log n)$ the community contributes so many intra-community edges that detection can succeed with only bounded concentration.

In the small $k$ regimes, the hard-cluster and von Mises distributions are closely aligned: For large $\kappa$ the von Mises$(\Theta^*,\kappa)$ law is well approximated by the Gaussian distribution with mean $\Theta^*$ and standard deviation $\frac 1 {\sqrt{\kappa}}$ yielding the heuristic correspondence $ \tau \,\approx\, \kappa^{-1/2}$ as $\kappa\to \infty.$
This correspondence helps explain why the same scanning-based procedures and thresholds emerge in parallel across the hard-cluster and von~Mises models.  

The hard-cluster and von Mises distributions are also aligned in the  large $k$ regimes, where $\tau\to 1$ and $\kappa\to 0$, in the sense that both the uniform distribution on $[0,2\pi\tau]$ and the von Mises distribution with spread parameter $\kappa$ converge to the uniform distribution on $[0,2\pi].$  Comparing the achievability conditions for the two community models suggests the correspondence $1-\tau \approx {\kappa^2} /4.$
The KL divergences in this regime satisfy
\[
    \KL( \uniform ([0,2\pi\tau]) || \uniform ([0,2\pi]) = -\ln \tau = 1 - \tau + o(1-\tau)
\]
and 
\[
    \KL( \mathrm{von Mises}(0,\kappa) || \uniform ([0,2\pi]) = {\kappa^2}/{4} + o(\kappa^2),
\]
which suggests the same correspondence.

\paragraph{Methodology}
Our achievability results are obtained by analyzing the interval and coherence tests, combined with sharp tail bounds for the false alarm and miss probabilities. The converse results rely on second-moment methods and control of likelihood ratios, together with convex-order comparisons between hypergeometric and binomial overlap distributions.  In the von Mises models, the analysis uses asymptotics for modified Bessel functions and a Gaussian approximation for the von Mises distribution in the large-concentration regime. The techniques illustrate how directional geometry and planted combinatorial structure can be analyzed in a unified way.

\section{The hard-cluster flat model} \label{sec-hard-cluster-flat}

In this section and the next three sections, the main result is presented as a theorem, followed by corollaries specializing the necessary and sufficient conditions to various regimes of interest, and then followed by subsections giving the proof of the theorem.

\bigskip

\begin{theorem}
\label{thm:hard-cluster-flat}
Consider the hard-cluster flat model with parameters $(N,K,\tau)$.
\begin{itemize}
\item[$\mathrm{(a)}$] (Achievability)
Strong detection is achievable by the interval test with threshold $\gamma$ in either of the following two cases:
\begin{enumerate}
    \item[$\mathrm{(A1)}$] 
    $\gamma=K$ and
    \(
        \frac{N^K \, \tau^{K-1}}{(K-1)!} \to 0\, .
    \)

    \item[$\mathrm{(A2)}$] 
    There exist sequences $c_N\to\infty$ and $\gamma \equiv \gamma_N$ with
    \[
        \gamma_N \triangleq (N-K)\tau + K - c_N \sqrt{(N-K)\tau} \, ,
        \qquad
        \gamma_N \geq 1+(N-1)\tau \ \text{for all sufficiently large }N,
    \]
    such that
    \begin{align}
        & \frac{(K-1)^2(1-\tau)^2}{(N-K)\tau} \to \infty \, ,
        \label{eq:suff_cond_1}\\[0.3em]
        & \frac{\big(K(1-\tau) - c_N \sqrt{(N-K)\tau}\big)^2}
        {2N\tau + K(1-\tau) - c_N \sqrt{(N-K)\tau}} \,-\, \log N \, \to \,  \infty \, .
        \label{eq:suff_cond_2}
    \end{align}
\end{enumerate}
\item[$\mathrm{(b)}$] (Converse)
If $\tau \leq 1/2$ for sufficiently large $N$ and
\begin{align}
\label{eq:nec-condition-hard-cluster-flat}
    \frac{2N\tau^2}{K^2}\left(1+\frac{K}{N\tau}\right)^{K+1} = o(1) \, ,
\end{align}
then weak detection is impossible.
\end{itemize}
\end{theorem}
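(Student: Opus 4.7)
The plan is to apply the second moment method, showing that the chi-squared divergence $\chi^2(\P\|\Q) = \E_{\Q}[L^2] - 1$ tends to zero, where $L = \d\P/\d\Q$. Since $\TV(\P,\Q)^2 \leq \tfrac{1}{4}\chi^2$, this rules out weak detection. Writing $L(X)$ as a mixture of conditional likelihood ratios over $(S^*, \Theta^*)$ and introducing a fresh independent copy $(\tilde S, \tilde\Theta)$, I would factorize $\E_\Q[L^2]$ index by index using the product structure of $\Q$. Each index $i \notin S^* \cap \tilde S$ contributes a factor of $1$, while for $i \in S^* \cap \tilde S$ the contribution is $\tau^{-2}\,\Q\{X_i \in A \cap \tilde A\} = U/\tau^2$, where $A, \tilde A$ are the two planted arcs and $U = |A \cap \tilde A|/(2\pi)$ is the (normalized) length of their intersection. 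Consequently
\[
    \E_\Q[L^2] \;=\; \E\bigl[(U/\tau^2)^M\bigr], \qquad M := |S^* \cap \tilde S|,
\]
with $U$ and $M$ independent, since the arcs are determined by the phases and the subset overlap by the sets.

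The next step is to describe these two laws. By the independent uniform choice of $K$-subsets, $M \sim \mathrm{Hypergeometric}(N, K, K)$. For $U$, the difference $(\tilde \Theta - \Theta^*) \bmod 2\pi$ is uniform on $[0, 2\pi)$, and under the assumption $\tau \leq 1/2$ a short geometric computation (carefully accounting for wrap-around) yields that $U$ has a point mass of $1 - 2\tau$ at $0$ and density $2$ on $(0, \tau]$.

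To combine these ingredients, I would invoke the classical convex-order comparison $\mathrm{Hypergeometric}(N, K, K) \leq_{cx} \mathrm{Binomial}(K, K/N)$ (both distributions have the same mean $K^2/N$, with the hypergeometric less dispersed). Since $m \mapsto y^m$ is convex in $m$ for all $y \geq 0$, conditioning on $U = u$ gives the binomial moment generating function bound
\[
    \E_M\bigl[(u/\tau^2)^M\bigr] \;\leq\; \bigl(1 - K/N + (u/\tau^2) \cdot K/N\bigr)^K.
\]
Integrating against the law of $U$ with the substitution $v = 1 - K/N + uK/(N\tau^2)$ reduces the inner integral to $\int v^K \, \d v$, and after simplification yields
\[
    \chi^2(\P\|\Q) \;\leq\; \frac{2N\tau^2}{K(K+1)} \left(1 + \frac{K(1-\tau)}{N\tau}\right)^{K+1} \;\leq\; \frac{2N\tau^2}{K^2}\left(1 + \frac{K}{N\tau}\right)^{K+1},
\]
which is $o(1)$ by the hypothesis \eqref{eq:nec-condition-hard-cluster-flat}.

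The step I expect to be the most delicate is the geometric description of the law of $U$. The assumption $\tau \leq 1/2$ is used essentially here to avoid a case analysis of arcs that nontrivially wrap past one another, ensuring the clean two-piece form above. The remaining manipulations---the hypergeometric-to-binomial convex comparison and the MGF-based integration---are standard but form the crux of reducing the mixed combinatorial and geometric integral to a closed-form bound matching the stated sufficient condition.
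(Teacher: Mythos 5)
Your proposal addresses only part (b) of the theorem; the achievability claims (A1) and (A2) of part (a) are not touched at all, and this is a genuine gap rather than a cosmetic omission. Proving (a) requires a separate analysis of the interval test. For (A1) one observes that with threshold $\gamma=K$ the miss probability is exactly zero (the planted arc always contains all $K$ planted points), and then bounds the false alarm probability by a double union bound---over which observation $X_i$ anchors the left endpoint of the scanning window, and over which $K-1$ of the remaining points fall in $[X_i,X_i+2\pi\tau]$---giving $p_{\fa}\le N\binom{N-1}{K-1}\tau^{K-1}\le N^K\tau^{K-1}/(K-1)!$. For (A2) one needs a Chernoff upper-tail bound on $1+\Binom(N-1,\tau)$ (the window count under $H_0$) to control $p_{\fa}$, and a Chernoff lower-tail bound on $\Binom(N-K,\tau)$ (the number of non-planted points landing in the planted window under $H_1$) to control $p_{\miss}$; conditions \eqref{eq:suff_cond_1} and \eqref{eq:suff_cond_2} are precisely what make these two exponents diverge. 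None of this appears in your write-up.

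For the converse, your argument is correct and is essentially the paper's proof: the same mixture representation of $L$, the same per-index factorization yielding $\E_{\Q}[L^2]=\E\bigl[(U/\tau^2)^M\bigr]$ with $M$ hypergeometric and $U$ the normalized arc overlap, the same description of the law of $U$ under $\tau\le 1/2$ (an atom of mass $1-2\tau$ at $0$ and density $2$ on $(0,\tau]$), the same Hoeffding convex-order domination of the hypergeometric by $\Binom(K,K/N)$, and the same integration step. The paper integrates the generating-function inequality over $z\in[0,1/\tau]$ whereas you integrate against the law of $U$; these are the same computation and yield the same bound $\frac{2N\tau^2}{K^2}\bigl(1+\frac{K}{N\tau}\bigr)^{K+1}$. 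So part (b) is fine as sketched; you need to supply part (a).
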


\bigskip


\bigskip 

\begin{corollary}[Achievability regimes]
\label{cor:flat_uniform_achievability}
Under the hard-cluster flat model, strong detection is possible in each of the following regimes with the interval test.
\begin{itemize}
    \item $K\geq 2$ fixed and $\tau=o\big(N^{-1-\frac{1}{K-1}}\big)$ 
    [Take $\gamma = K$ so (A1) is satisfied.]

    \item $K=N^{\alpha}$ with $0<\alpha\leq 1/2$, and $\ \tau=\frac{K^2}{(2+\epsilon)N\log N} = \frac{N^{2\alpha-1}}{(2+\epsilon)\log N}$
    for some constant $\epsilon\in(0,1/2)$. Here, the threshold $\gamma_N$ is chosen so that $c_N\to\infty$ and $c_N=o \big(\sqrt{\log N}\big)$.

    \item $K=N^{\alpha}$ with $1/2 < \alpha < 1$, and $\ \tau\in(0,1-\epsilon]$
    for some constant $\epsilon\in(0,1)$. Here, the threshold $\gamma_N$ is chosen so that $c_N \to \infty$ and $c_N=o\big(K/\sqrt{N\tau}\,\big)$.
\end{itemize}
\end{corollary}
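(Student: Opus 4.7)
The plan is to verify the hypotheses of Theorem~\ref{thm:hard-cluster-flat}(a) in each of the three regimes: the fixed-$K$ regime invokes case (A1), while the two polynomial-$K$ regimes invoke case (A2) with appropriate choices of the auxiliary sequence $c_N$. The fixed-$K$ case is essentially a one-line substitution: under $\tau = o(N^{-1-1/(K-1)})$, we have $\tau^{K-1} = o(N^{-K})$, so $N^K \tau^{K-1}/(K-1)! = o(1)$, and (A1) holds with $\gamma = K$.

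For $K = N^\alpha$ with $0 < \alpha \leq 1/2$ and $\tau = K^2/((2+\epsilon)N\log N)$, I would pick $c_N \to \infty$ with $c_N = o(\sqrt{\log N})$. Condition \eqref{eq:suff_cond_1} then reduces to $K^2/(N\tau) \sim (2+\epsilon)\log N \to \infty$. The threshold constraint $\gamma_N \geq 1 + (N-1)\tau$ simplifies to $(K-1)(1-\tau) > c_N \sqrt{(N-K)\tau}$, which holds because $K/\sqrt{N\tau} \sim \sqrt{(2+\epsilon)\log N}$ while $c_N = o(\sqrt{\log N})$. In \eqref{eq:suff_cond_2}, the same estimate gives $c_N \sqrt{(N-K)\tau} = o(K)$, so the numerator is $\sim K^2$; since $K \to \infty$ implies $K \ll K^2/\log N$, the denominator is dominated by $2N\tau = 2K^2/((2+\epsilon)\log N)$. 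The resulting ratio is $(1+\epsilon/2)\log N + o(\log N)$, and subtracting $\log N$ leaves a diverging $(\epsilon/2)\log N$ term.

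For $K = N^\alpha$ with $1/2 < \alpha < 1$ and $\tau \in (0, 1-\epsilon]$, the quantity $K/\sqrt{N\tau}$ is at least of order $N^{\alpha-1/2}/\sqrt{1-\epsilon}$, so I can pick $c_N \to \infty$ with $c_N = o(K/\sqrt{N\tau})$; the threshold constraint follows exactly as above. In \eqref{eq:suff_cond_1} the ratio is bounded below by a constant times $K^2/(N\tau)$, which grows polynomially since $K^2/N = N^{2\alpha-1} \to \infty$ and $\tau \leq 1$. In \eqref{eq:suff_cond_2} the numerator is $\Theta(K^2(1-\tau)^2)$ while the denominator is $O(N\tau + K) = O(N)$, so the ratio is at least $\Omega(N^{2\alpha-1})$, comfortably dominating $\log N$.

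The main obstacle is the borderline case $\alpha = 1/2$ in the second regime, where both the numerator and the $2N\tau$ term in the denominator of \eqref{eq:suff_cond_2} are of order $K^2 = N$ up to logarithmic factors. There the entire argument turns on the explicit constant $2+\epsilon$ appearing in $\tau$: the margin of the ratio above $\log N$ is only $(\epsilon/2)\log N + o(\log N)$, which is why the corollary insists on $\epsilon > 0$ strictly rather than allowing the constant $2$ itself. In all other sub-regimes, the relevant margins are polynomially large and the verifications are comparatively routine bookkeeping.
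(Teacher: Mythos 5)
Your proposal is correct and takes essentially the same route as the paper: the paper's own justification consists of the bracketed hints in the corollary (invoke (A1) with $\gamma=K$ for fixed $K$, and (A2) with $c_N=o(\sqrt{\log N})$ resp.\ $c_N=o(K/\sqrt{N\tau})$ for the two polynomial regimes), and your verifications of \eqref{eq:suff_cond_1}, \eqref{eq:suff_cond_2}, and the constraint $\gamma_N\geq 1+(N-1)\tau$ match what those hints require. The asymptotics you compute (numerator $\sim K^2$, denominator dominated by $2N\tau$, ratio $(1+\epsilon/2)\log N$ in the second regime; ratio $\Omega(N^{2\alpha-1})$ in the third) are the intended ones.
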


\bigskip

\begin{corollary}[Impossibility regimes]
\label{cor:flat_uniform_impossibility}
Under the hard-cluster flat model, weak detection is impossible in each of the following regimes.
\begin{itemize}
    \item $K\ge 2$ fixed and $\tau=\omega\big(N^{-1-\frac{1}{K-1}}\big)$.
    [This scaling implies the condition \eqref{eq:nec-condition-hard-cluster-flat} of
    Theorem~\ref{thm:hard-cluster-flat}(b).]

    \item $K=N^{\alpha}$ with $0<\alpha< 1/2$, and
    \(
        \tau=\frac{K^2}{\epsilon\,N\log N}
        =
        \frac{N^{2\alpha-1}}{\epsilon\log N}
    \)
    for any constant $\epsilon\in(0,1-2\alpha)$.
    [For this choice of $(K,\tau)$,
    \[
        \frac{2N\tau^2}{K^2}
        = \frac{2}{\epsilon^2\log^2 N}\,N^{2\alpha-1} \, ,
        \qquad
        \frac{K(K+1)}{N\tau}
        = \epsilon\log N\,(1+N^{-\alpha}),
    \]
    hence
    \[
        \frac{2N\tau^2}{K^2} \, \left( 1 + \frac{K}{N\tau}\right)^{K+1} 
        \,\leq\,
        \frac{2N\tau^2}{K^2}\,\exp\left\{\frac{K(K+1)}{N\tau}\right\}
        \,=\,
        \frac{2+o(1)}{\epsilon^2\log^2 N}\,N^{2\alpha+\epsilon-1}
        \,\to \, 0 \, ,
    \]
    since $\epsilon<1-2\alpha$.]
\end{itemize}
\end{corollary}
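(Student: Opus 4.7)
Both parts of the corollary assert impossibility of weak detection, and each is an application of Theorem~\ref{thm:hard-cluster-flat}(b). Since $\tau \to 0$ in both stated regimes, the requirement $\tau \leq 1/2$ is automatic for large $N$, so the plan in each case reduces to verifying the algebraic condition \eqref{eq:nec-condition-hard-cluster-flat}. The substantive analysis (the second-moment bound on the likelihood ratio) is already carried out by Theorem~\ref{thm:hard-cluster-flat}(b); the corollary is essentially a substitution exercise.

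\textbf{Case~1 ($K$ fixed, $\tau = \omega(N^{-1-1/(K-1)})$).} I would expand $(1 + K/(N\tau))^{K+1}$ by the binomial theorem, writing the left-hand side of \eqref{eq:nec-condition-hard-cluster-flat} as a finite sum of terms of the form $c_{j,K}\, N^{1-j}\tau^{2-j}$ with $c_{j,K} = 2\binom{K+1}{j} K^{j-2}$, for $j = 0, \dots, K+1$. Two terms control the behavior: the $j = 0$ term, of order $N\tau^2$, which is $o(1)$ in the relevant small-$\tau$ range, and the $j = K+1$ term, $\frac{2K^{K-1}}{N^K \tau^{K-1}}$, which is $o(1)$ precisely when $\tau^{K-1} = \omega(N^{-K})$, equivalent to the hypothesis $\tau = \omega(N^{-1-1/(K-1)})$. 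The intermediate terms are geometric interpolations and are dominated by one of these two extremes, so the entire sum is $o(1)$.

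\textbf{Case~2 ($K = N^\alpha$, $\alpha < 1/2$, $\tau = N^{2\alpha - 1}/(\epsilon \log N)$).} This is the calculation sketched in the corollary's bracketed hint. Using the elementary bound $(1 + x)^{K+1} \leq e^{(K+1)x}$ with $x = K/(N\tau)$, I bound \eqref{eq:nec-condition-hard-cluster-flat} by $\frac{2N\tau^2}{K^2}\exp(K(K+1)/(N\tau))$. Plugging in the stated values, the polynomial factor is $\frac{2}{\epsilon^2 \log^2 N} N^{2\alpha-1}$ and the exponent equals $(1+o(1))\epsilon \log N$, so the overall bound is $\Theta(N^{2\alpha-1+\epsilon}/\log^2 N) \to 0$ since $\epsilon < 1 - 2\alpha$. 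The only technical point is tracking the $1 + o(1)$ factor inside the exponential, which is harmless because $K = N^\alpha \to \infty$ forces $K+1 \sim K$. No genuine obstacle arises in either case; once Theorem~\ref{thm:hard-cluster-flat}(b) is in hand, both parts are one-step substitutions.
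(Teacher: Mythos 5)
Your Case 2 is the paper's own computation essentially verbatim: bound $\bigl(1+\tfrac{K}{N\tau}\bigr)^{K+1}\le \exp\bigl(\tfrac{K(K+1)}{N\tau}\bigr)$, substitute the stated $\tau$, and use $\epsilon<1-2\alpha$; your remark that $K+1\sim K$ handles the only $o(1)$ bookkeeping. Your Case 1 fills in a verification the paper leaves as a one-line bracketed assertion, and the binomial expansion is a legitimate way to do it: with $K$ fixed there are boundedly many terms, the $j$th term equals $N\tau^2(N\tau)^{-j}$ up to bounded constants, so the sum is controlled by the two endpoints $N\tau^2$ and $N^{-K}\tau^{-(K-1)}$, and the latter is $o(1)$ precisely under $\tau=\omega\bigl(N^{-1-\frac{1}{K-1}}\bigr)$. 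So the approach matches the paper's and the algebra is right.

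One point deserves flagging, though it is a gap you share with the paper rather than one you introduce. In Case 1 the $j=0$ endpoint $\Theta(N\tau^2)$ is $o(1)$ only if $\tau=o(N^{-1/2})$, and the stated hypothesis is only a \emph{lower} bound on $\tau$; likewise your opening claim that ``$\tau\to 0$ in both stated regimes'' is not implied by the Case 1 hypothesis. For instance, with $K=2$ and constant $\tau\le 1/2$ the hypothesis $\tau=\omega(N^{-2})$ holds, yet the left side of \eqref{eq:nec-condition-hard-cluster-flat} is $\Theta(N)\to\infty$, so Theorem~\ref{thm:hard-cluster-flat}(b) yields nothing. Your phrase ``in the relevant small-$\tau$ range'' silently imports the extra restriction $\tau=o(N^{-1/2})$. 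The intended reading is presumably that $\tau$ sits just above the achievability threshold $N^{-1-\frac{1}{K-1}}$, where $N\tau^2\to 0$ holds automatically (since $1-2\bigl(1+\tfrac{1}{K-1}\bigr)<0$); to make the argument airtight you should either state that restriction explicitly or note that the corollary's first bullet, read literally for all $\tau=\omega\bigl(N^{-1-\frac{1}{K-1}}\bigr)$, is not a consequence of condition \eqref{eq:nec-condition-hard-cluster-flat} alone.
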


\subsection{Achievability for hard-cluster flat model -- interval test}   \label{sec-2-achievability}

\noindent (a) 
Fix an integer threshold $\gamma \geq K$ and consider the interval test that thresholds the
statistic~\eqref{eq: interval-statistic}, i.e., it decides $H_1$ if there exists an interval of length $2\pi\tau$ containing at least $\gamma$ of the $X_i$'s, and decides $H_0$ otherwise.
This decision rule is equivalent to the generalized maximum likelihood decision rule obtained by comparing the maximum over $\theta$ of the conditional likelihood given $\Theta^*=\theta$ to a threshold; see Appendix~\ref{sec:scan_as_GLRT}.

\medskip
\noindent \emph{Case (A1): $\gamma=K$.}~~
Here the probability of missed detection is zero: under $H_1$ the interval $[\Theta^*,\Theta^*+2\pi\tau]$ contains all $K$ points indexed by $S^*$, so the scan statistic is at least $K$ and $p_{\miss}=0$. It remains to bound $p_{\fa}$ under $H_0$.

We upper bound the probability of false alarm $p_{\fa}$ as follows. Let $B_i$ be the event that the interval $[X_i,X_i+2\pi\tau]$ contains at least $\gamma$ points, including $X_i$ itself. Then $p_{\fa}=\Q(\cup_{i\in[N]}B_i)$, so by the union bound and the fact that $\Q(B_i)=\Q(B_1)$ for all $i$,
\[
    p_{\fa} \, \leq\,  N \cdot\Q(B_1) \, .
\]
Under $\Q$, the cardinality of $\{j\in\{2,\ldots,N\}:X_j\in[X_1,X_1+2\pi\tau]\}$ has the $\Binom(N-1,\tau)$ distribution, so the total number of observations in $[X_1,X_1+2\pi\tau]$ is distributed as $1+\Binom(N-1,\tau)$.
The event $B_1$ is the union of the events $\{X_j\in[X_1,X_1+2\pi\tau]\ \text{for } j\in S\}$ over all subsets $S\subset\{2,\ldots,N\}$ of cardinality $\gamma-1$. By another application of the union bound,
\[
    \Q(B_1) \, \leq \,\binom{N-1}{\gamma-1}\tau^{\gamma-1}.
\]
Hence,
\begin{align}   \label{eq:pfa_unif_up_bound1}
    p_{\fa}
    \,\leq\,
    N\binom{N-1}{\gamma-1}\tau^{\gamma-1}
    \,=\,
    \binom{N}{\gamma}\gamma\,\tau^{\gamma-1}
    \,\leq\,
    \frac{N^\gamma\,\tau^{\gamma-1}}{(\gamma-1)!} \, .
\end{align}
Specializing to $\gamma=K$, this shows that $p_{\fa}\to 0$ whenever
$ \frac{N^K \, \tau^{K-1}}{(K-1)!} \to 0\, .$ This concludes Case (A1).

\medskip
\noindent \emph{Case (A2): $\gamma=\gamma_N$.}~~
If $K$ increases with $N$ then the critical value of $\tau$ could be large enough that under $\P$ the interval $[\Theta^*,\Theta^*+2\pi\tau]$ contains not only the $K$ points indexed by $S^*$ but also, with high probability, a significant number of other points. This motivates the use of a threshold $\gamma$ that is greater than $K$. The probability of false alarm depends only on $N,\tau,$ and $\gamma$ and does not involve $K$.

We first bound $p_{\fa}$ using a Chernoff bound, which is suitable when $\gamma$ is at least the mean under $\Q$. As above, $p_{\fa}\le N\,\Q(B_1)$. Let $Y\triangleq 1+\Binom(N-1,\tau)$ denote the number of points falling in $[X_1,X_1+2\pi\tau]$ under $\Q$, with mean $\mu=1+(N-1)\tau$.
A standard Chernoff bound yields $P\{Y\geq (1+\delta)\mu\} \, \leq \, \exp\big(-\mu\delta^2/(2+\delta)\big)$, and therefore, for $\gamma \geq 1+(N-1)\tau$,
\begin{align*}
    \Q(B_1)
    \,= \,
    P\{1+\Binom(N-1,\tau)\geq \gamma\}
    \,\leq \,
    \exp\left\{
        -\frac{(\gamma-1-(N-1)\tau)^2}{1+(N-1)\tau+\gamma}
    \right\} \, . 
\end{align*}
Hence, for $\gamma\geq 1+(N-1)\tau$,
\begin{align} \label{eq:pfa_bnd}
    p_{\fa}
    \, \leq \,
    N \cdot \exp\left\{
        -\frac{(\gamma-1-(N-1)\tau)^2}{1+(N-1)\tau+\gamma}
    \right\} \, .
\end{align}
We next upper bound the probability of a miss. Suppose $H_1$ is true, so that the points $(X_i:i\in S^*)$ all lie in the interval $[\Theta^*,\Theta^*+2\pi\tau]$. Detection will occur if at least $\gamma-K$ of the points $(X_i:i\in[N]\setminus S^*)$ also lie in this interval. The number of such points has the $\Binom(N-K,\tau)$ distribution, so assuming $(N-K)\tau \geq \gamma-K$,
\begin{align} \label{eq:pmiss}
    p_{\miss}
    \, \leq \,
    P\{\Binom(N-K,\tau)<\gamma-K\}
    \, \leq \,
    \exp\left\{
        -\frac{((N-K)\tau-\gamma+K)^2}{2(N-K)\tau}
    \right\} \, ,
\end{align}
where we used the Chernoff bound
$P\{\Binom(\bar n,p) \leq (1-\delta)\bar n p\} \, \leq \, e^{-\bar n p\delta^2/2}$ with 
$\bar n=N-K$, $p=\tau$, and
$\delta=\frac{(N-K)\tau-\gamma+K}{(N-K)\tau} \, .$
Now suppose there exists a sequence $c_N\to\infty$ such that
\begin{align}
    &\frac{(K-1)^2(1-\tau)^2}{(N-K)\tau}\to\infty,
    \label{eq:suff_cond_1-repeat}\\
    &\frac{\bigl(K(1-\tau)-c_N\sqrt{(N-K)\tau}\bigr)^2}
    {2N\tau+K(1-\tau)-c_N\sqrt{(N-K)\tau}}-\log N\to\infty.
    \label{eq:suff_cond_2-repeat}
\end{align}
Select the threshold
\[
    \gamma=\gamma_N\triangleq (N-K)\tau+K-c_N\sqrt{(N-K)\tau},
\]
and assume (as in the theorem statement) that $\gamma_N\geq 1+(N-1)\tau$ for all sufficiently large $N$, so that \eqref{eq:pfa_bnd} applies. With this choice of $\gamma$, we have $(N-K)\tau-\gamma+K=c_N\sqrt{(N-K)\tau}$, and \eqref{eq:pmiss} implies
\[
    p_{\miss}\leq \exp\left(-{c_N^2}/{2}\right) \, \to \,  0 \, .
\]
Moreover, using $\gamma=\gamma_N$ in \eqref{eq:pfa_bnd} and noting that
$\gamma_N-1-(N-1)\tau = K(1-\tau)-c_N\sqrt{(N-K)\tau}-(1-\tau)$ 
and
$1+(N-1)\tau+\gamma_N = 2N\tau+K(1-\tau)-c_N\sqrt{(N-K)\tau}+O(1)$,
condition \eqref{eq:suff_cond_2-repeat} yields $p_{\fa}\to 0$.
This concludes Case (A2), and hence the proof of Theorem~\ref{thm:hard-cluster-flat}(a).

\medskip 

\subsection{Converse for hard-cluster flat model} \label{sec-2-converse}

We shall employ a second moment method for identifying conditions such that $\TV(\P,\Q)\to 0$, where $\TV(\P,\Q)$ is the total variation distance between $\P$ and $\Q$, thereby establishing the impossibility of weak detection.  The method takes advantage of the fact that $\P$ is a mixture of distributions which each have a  simple likelihood ratio with respect to $\Q.$  The method is described in~\cite{wu2021planted} and for the reader's convenience we explain it in detail here.  The key is to bound 
$\Var_{\Q}(L), $ where $L(X)=\frac{\d \P}{\d \Q}(X)$, which is also known as the $\chi^2$ divergence between $\P$ and $\Q$~\cite{tsybakov2009estimation}.
Since $\P\ll\Q$ it follows that
\[
    \TV(\P,\Q) = \frac 1 2 \, \E_{\Q} |L-1|  \leq \frac12 \sqrt{\Var_{\Q}(L)} \, .
\]
Hence, if $\Var_{\Q}(L)\to 0$ as $N\to\infty$, then $\TV(\P,\Q)\to 0$ and weak detection is impossible.   Since $E_{\Q}[L]=1$ it follows that $\Var_{\Q}(L) = \E_\Q[L^2] -1 .$

Averaging over the possible values of $(S^*,\Theta^*)$, we may write
\begin{align} \label{eq:L_as_mixture}
    L(X)
    =
    \frac{1}{2\pi}\int_0^{2\pi}
    \frac{1}{\binom{N}{K}}
    \sum_{\substack{S\subset[N] \, : \,  |S|=K}}
    L_{S,\theta}(X) \
    \d\theta \, ,
\end{align}
where
\[
    L_{S,\theta}(X)
    \triangleq
    \tau^{-K}\,
    \mathbf{1}\big\{X_i\in[\theta,\theta+2\pi\tau]\ \text{for all } i\in S\big\} 
\]
is the conditional likelihood ratio given $S^* = S$ and $\Theta^* = \theta$.
If we view $S$ and $\Theta$ as random variables independent of $X$ such that $S$ is uniformly distributed over subsets of $[N]$ with $|S|=K$, $\Theta$ is uniformly distributed over $[0,2\pi]$, and $S$ and $\Theta$ are mutually independent, then \eqref{eq:L_as_mixture} can compactly be rewritten as
$ L(X) =  \E_{S,\Theta}[L_{S,\Theta}(X)].$  Moreover, if $(S',\Theta')$ represents an independent copy of $(S,\Theta)$, then
\begin{align*}
L^2(X) =  \E_{S,\Theta}[L_{S,\Theta}(X)] \E_{S',\Theta'}[L_{S',\Theta'}(X)]
= \E_{S,\Theta, S', \Theta'}[L_{S,\Theta}(X)L_{S',\Theta'}(X)]
\end{align*}
Then, taking expectation with respect to $X$ having distribution $\Q$ and switching the order of integration yields:
\begin{align}  \label{eq:L2_as_mixture}
\E_{\Q}[L^2] = \E_{S,\Theta, S', \Theta'}\left[\E_Q[L_{S,\Theta}(X)L_{S',\Theta'}(X)]\right].
\end{align}
Equation \eqref{eq:L2_as_mixture} is the key equation used in this and other sections in this paper for calculating $\E_{\Q}[L^2]$ and $\Var_{\Q}(L) = \E_\Q[L^2] -1 .$

Fix $S,S',\theta,\theta'$, and let $j=|S\cap S'|$.
Let $2\pi\delta$ denote the length of the intersection of the two arcs
$[\theta,\theta+2\pi\tau]$ and $[\theta',\theta'+2\pi\tau]$ (modulo $2\pi$).
Under $\Q$, the coordinates $X_i$ are i.i.d.\ uniform on $[0,2\pi)$, and therefore
\begin{align*}
    \E_{\Q}\big[L_{S,\theta}(X) \cdot L_{S',\theta'}(X)\big]
    &=
    \tau^{-2K} \cdot 
    \Q 
    \left\{
    \begin{array}{rl}
        X_i \in [\theta,\theta+ 2\pi\tau] \cap [\theta',\theta'+2\pi\tau] & \mbox{ for } i\in S\cap S', 
        \\
        X_i \in [\theta,\theta+2\pi\tau] & \mbox{ for } i \in S \backslash S',
        \\
        X_i \in [\theta',\theta'+2\pi\tau] & \mbox{ for } i \in S' \backslash S
    \end{array} 
    \right\} 
    \\
    &=
    \tau^{-2K}\,\delta^{\,j}\,\tau^{2(K-j)}
    \,=\,
    \delta^{\,j}\,\tau^{-2j} \, .
\end{align*}

We next average over $\theta,\theta'$.
Let $u\in[0,1/2]$ denote the circular distance between $\theta$ and $\theta'$,
normalized by $2\pi$, i.e.
\[
    u \triangleq \frac{1}{2\pi}\min\big\{|\theta-\theta'|,
    \,
    2\pi-|\theta-\theta'|\big\},
    \qquad
    u\sim \mathrm{uniform}\left(\left[0, \, \frac12\right]\right) \, .
\]
Therefore, the normalized overlap $\delta$ is a function $\delta=\delta_\tau(u)$ given by
\begin{align} \label{eq:delta_general}
    \delta_\tau(u)
    =
    \begin{cases}
        (\tau-u)_+, & 0 < \tau \leq 1/2 \, ,\\[1mm]
        (2\tau-1) + (1-\tau-u)_+, & 1/2< \tau <1 \, ,
    \end{cases}
\end{align}
and thus, for each integer $j\geq 1$,
\begin{align} \label{eq:delta_moment}
    \E_{\theta,\theta'}[\delta^{\,j}]
    =
    2\int_0^{1/2}\delta_\tau(u)^{\,j}\,\d u
    =
    \begin{cases}
        \frac{2}{j+1}\,\tau^{j+1}, & 0<\tau\leq \frac12\\[2mm]
        \frac{2}{j+1}\Big(\tau^{j+1}-(2\tau \!-\! 1)^{j+1}\Big)
        +2\big(\tau - \frac12 \big) \big(2\tau \!-\! 1 \big)^j,
        & \frac12 < \tau <1
    \end{cases}
\end{align}
Now average over $S,S'$ as well.  Writing $J \triangleq |S\cap S'|$, we obtain
\begin{align} \label{eq:EQ_L2_general}
    \E_{\Q}[L^2]
    =
    \P\{J=0\}
    +
    \sum_{j=1}^K \P\{J=j\}\cdot 
    \tau^{-2j} \cdot 
    \E_{\theta,\theta'}\big[ \delta^{\,j} \big] \, , 
\end{align}
and $J$ has the hypergeometric distribution with parameters $(N,K,K)$.
Using $\Var_{\Q}(L)=\E_{\Q}[L^2]-1$ yields
\[
    \Var_{\Q}(L)
    =
    \sum_{j=1}^K \P\{J=j\}\,
    \Big(\tau^{-2j}\cdot \E_{\theta,\theta'}[\delta^{\,j}] - 1\Big)
    \,\leq \,
    \sum_{j=1}^K \P\{J=j\} \cdot 
    \tau^{-2j} \cdot \E_{\theta,\theta'}\big[\delta^{\,j} \big] \, .
\]
We specialize to the case $0< \tau \leq 1/2$ for the remainder of the proof.
Using \eqref{eq:delta_moment} in \eqref{eq:EQ_L2_general},
\begin{align*}
    \E_{\Q}[L^2]
    & \,=\,
    \P\{J=0\}
    \, + \,
    \sum_{j=1}^K \P\{J=j\}
    \left(\tau^{-2j}\cdot \frac{2}{j+1}\tau^{j+1}\right)\\
    & \,=\,
    \P\{J=0\}
    \, + \,
    \sum_{j=1}^K \P\{J=j\}\,
    \frac{2}{(j+1)\tau^{j-1}} \, ,
\end{align*}
so that
\begin{align} \label{eq:varQbnd}
    \Var_{\Q}(L)
    \, \leq \,
    \sum_{j=1}^K \P\{J=j\}\, \frac{2}{(j+1)\tau^{j-1}}.
\end{align}
We now compare the overlap distribution to a binomial.
The hypergeometric$(N,K,K)$ distribution is second-order stochastically less spread out than
$\Binom\left(K,\frac{K}{N}\right)$; see Hoeffding~\cite{hoeffding1963probability}.
Since $y\mapsto z^y$ is convex for each fixed $z\geq 0$, this implies that for all $z\geq 0$,
\[
    \sum_{j=0}^K \P\{J=j\} \cdot z^j
    \, \leq \, 
    \left(1-\frac{K}{N}+\frac{Kz}{N}\right)^K \, .
\]
Integrating over $z\in[0,r]$ for $r>0$ yields
\begin{align*}
    \sum_{j=0}^K \P\{J=j\}\,\frac{r^{j+1}}{j+1}
    &\leq
    \int_0^r
    \left(1-\frac{K}{N}+\frac{Kz}{N}\right)^K \d z
    =
    \frac{N}{K(K+1)}
    \left(1-\frac{K}{N}+\frac{Kz}{N}\right)^{K+1}\Bigg|_{z=0}^{r}
    \\
    &\leq
    \frac{N}{K^2}\left(1+\frac{Kr}{N}\right)^{K+1}.
\end{align*}
Dropping the $j=0$ term on the left, setting $r=1/\tau$, multiplying both sides by $2\tau^2$,
and using \eqref{eq:varQbnd} gives
\begin{align}
\Var_{\Q}(L)
&\leq
\sum_{j=1}^K \P\{J=j\}\,\frac{2}{(j+1)\tau^{j-1}}
\, \leq \, 
\frac{2N\tau^2}{K^2}
\left(1+\frac{K}{N\tau}\right)^{K+1}
\label{eq:varQL2_upper_bnd_unif_a}
\end{align}
Consequently, $\TV(\P,\Q)\le \frac12\sqrt{\Var_{\Q}(L)}\to 0$ whenever the right-hand side of
\eqref{eq:varQL2_upper_bnd_unif_a} tends to zero, and weak detection is impossible in that regime. This proves Theorem~\ref{thm:hard-cluster-flat}(b).

\section{The von Mises flat model} \label{sec-von-mises-flat}

\begin{theorem}
\label{thm:vm_flat}
Consider the von Mises flat model with parameters $(N,K,\kappa)$. For $\tau\in(0,1)$, define
\[
  p_\kappa(\tau) \triangleq \P\{\mathrm{vonMises}(0,\kappa)\in[-\pi\tau,\pi\tau]\} \, ,
  \qquad
  g \triangleq K\big(p_\kappa(\tau)-\tau\big) \, .
\]
\begin{itemize}
  \item[$\mathrm{(a)}$] (Achievability)
  Suppose there exist sequences $c_N\to\infty$ and $\gamma_N$ with
  \[
      \gamma_N \triangleq N\tau + g - c_N\sqrt{N\tau+g},
      ~~~~~~
      \gamma_N \geq 1+(N-1)\, \tau \ \text{for all sufficiently large }N,
  \]
  such that
  \begin{align}
    &\frac{g^2}{N\tau + g}  \to  \infty \, ,
    \label{eq:vm_suff_1}\\
    &\frac{\big(g - c_N\sqrt{N\tau + g}\big)^2}{2N\tau + g - c_N\sqrt{N\tau + g}}
      -  \log N \,\to\, \infty \, .
    \label{eq:vm_suff_2}
  \end{align}
  Then the interval test with window length $2\pi\tau$ and threshold $\gamma_N$ achieves strong detection.

  \item[$\mathrm{(b)}$] (Converse)
  Let $R(\kappa)\triangleq I_0(2\kappa)/I_0^2(\kappa)$, where $I_0$ is the modified Bessel function of the first kind. If
  \begin{align}
      \frac{K^2}{N}\big(R(\kappa)-1\big) - \log R(\kappa) \,\to\, -\infty,
      \label{eq:vm_conv}
  \end{align}
  then weak detection is impossible.
\end{itemize}
\end{theorem}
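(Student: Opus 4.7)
I would mirror the argument of Section~\ref{sec-2-achievability}, Case~(A2), replacing the ``excess signal'' $K(1-\tau)$ by $g = K(p_\kappa(\tau)-\tau)$, the quantity appropriate to the von~Mises planted distribution. Under $H_1$, the count of observations in the window $[\Theta^*-\pi\tau,\Theta^*+\pi\tau]$ centered on $\Theta^*$ is the sum of two independent binomials $\Binom(K,p_\kappa(\tau))$ and $\Binom(N-K,\tau)$, with mean $N\tau + g$. A Chernoff bound at the prescribed threshold $\gamma_N = N\tau + g - c_N\sqrt{N\tau+g}$ yields $p_{\miss}\leq\exp(-c_N^2/2)\to 0$, and condition~\eqref{eq:vm_suff_1} ensures $g/\sqrt{N\tau+g}\to\infty$ so that a slowly diverging $c_N$ can be chosen with $\gamma_N\geq 1+(N-1)\tau$. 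For the false alarm I would anchor one of $N$ candidate windows at each observation and apply the Chernoff bound to $1+\Binom(N-1,\tau)$, giving
\[
    p_{\fa} \,\leq\, N\cdot\exp\!\left\{-\frac{(\gamma_N-1-(N-1)\tau)^2}{1+(N-1)\tau+\gamma_N}\right\},
\]
which tends to zero under \eqref{eq:vm_suff_2}. This part is a direct adaptation of the hard-cluster proof and presents no essential new difficulty.

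\textbf{Converse (b), reduction.} I would apply the second moment method of Section~\ref{sec-2-converse}, taking the conditional likelihood ratio $L_{S,\theta}(X)=\prod_{i\in S} e^{\kappa\cos(X_i-\theta)}/I_0(\kappa)$ and computing $\E_\Q[L_{S,\theta}L_{S',\theta'}]$ coordinate by coordinate. Indices outside $S\cap S'$ contribute $1$ because $\E_\Q[e^{\kappa\cos(X-\theta)}] = I_0(\kappa)$. For $i\in S\cap S'$, the sum-to-product identity $\cos(x-\theta)+\cos(x-\theta')=2\cos(x-\bar\theta)\cos((\theta'-\theta)/2)$, together with the integral definition of $I_0$ and its evenness, gives a coordinate factor $\rho(\phi) \triangleq I_0(2\kappa|\cos(\phi/2)|)/I_0^2(\kappa)$ where $\phi = \theta'-\theta$. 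Averaging over $(S,\Theta,S',\Theta')$ and exchanging expectations,
\[
    \E_\Q[L^2] \,=\, \E_{J,\phi}\!\left[\rho(\phi)^{J}\right],
\]
with $J$ hypergeometric$(N,K,K)$ and $\phi$ uniform on $[0,2\pi]$. Hoeffding's convex-order comparison of $J$ to $\Binom(K,K/N)$, applied exactly as in Section~\ref{sec-2-converse}, then yields $\E_J[\rho^J]\leq(1+K(\rho-1)/N)^K\leq\exp(c(\rho-1))$ with $c\triangleq K^2/N$.

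\textbf{Closing the argument and main obstacle.} The remaining task is to bound $\E_\phi[\exp(c(\rho(\phi)-1))]$, and it rests on two properties of $\rho$: (i)~the normalization $\E_\phi[\rho]=1$, obtained by swapping the order of integration and observing that the uniform mixture over location of the von~Mises densities returns $1/(2\pi)$; and (ii)~the pointwise bound $\rho(\phi)\leq\rho(0)=R(\kappa)$. I would then apply the chord inequality for the convex function $y\mapsto e^{cy}$ on $[0,R(\kappa)-1]$ when $\rho\geq 1$, and the trivial bound $e^{c(\rho-1)}\leq 1$ when $\rho<1$, to obtain
\[
    \E_\phi\!\left[e^{c(\rho-1)}\right] \,\leq\, 1 + \frac{\E_\phi[(\rho-1)_+]}{R(\kappa)-1}\bigl(e^{c(R(\kappa)-1)}-1\bigr).
\]
Since $\E_\phi[\rho]=1$ implies $\E_\phi[(\rho-1)_+]=\E_\phi[(1-\rho)_+]\leq 1$, this yields $\Var_\Q(L)\leq(e^{c(R(\kappa)-1)}-1)/(R(\kappa)-1)$. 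The hypothesis~\eqref{eq:vm_conv} forces $R(\kappa)\to\infty$ (so $\log(R(\kappa)-1)$ and $\log R(\kappa)$ are asymptotically equivalent) and $c(R(\kappa)-1)-\log R(\kappa)\to-\infty$, under which the bound vanishes and hence $\TV(\P,\Q)\to 0$. The main obstacle is the Bessel identity in the reduction step and in particular the proof of $\E_\phi[\rho]=1$; without the latter the chord bound would sharpen only to $e^{c(R(\kappa)-1)}$ rather than to $e^{c(R(\kappa)-1)}/R(\kappa)$, and the threshold in \eqref{eq:vm_conv} would not be recovered.
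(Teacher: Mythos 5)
Your proposal is correct and follows essentially the same route as the paper: the same interval-test/Chernoff analysis for achievability (decomposing the count in the window centered at $\Theta^*$ as $\Binom(K,p_\kappa(\tau))+\Binom(N-K,\tau)$), and for the converse the same second-moment computation, Hoeffding convex-order comparison with $\Binom(K,K/N)$, normalization identity $\E_\phi[\rho]=1$ (the paper's Lemma~\ref{lem:vm_rho_mean_one}), and chord inequality. The only immaterial difference is that you apply the chord bound to $(\rho-1)_+$ on $[0,R(\kappa)-1]$ and use $\E_\phi[(\rho-1)_+]\le 1$, yielding a denominator $R(\kappa)-1$ where the paper applies it to $\rho$ on $[0,R(\kappa)]$ and gets $R(\kappa)$; since \eqref{eq:vm_conv} forces $R(\kappa)\to\infty$, as you correctly observe, the two bounds are asymptotically equivalent and both recover the stated threshold.
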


\bigskip

\begin{corollary}[Achievability regimes]
\label{cor:vm_flat_achievability}
Under the von Mises flat model, strong detection is possible in each of the following regimes with the interval test.
\begin{itemize}
  \item $K=N^{\alpha}$ with $0<\alpha\leq 1/2$, and
  \(
      \frac{1}{\sqrt{\kappa}}=\frac{K^2}{c_1\,N\log N}
  \)
  for a constant $c_1 > c_0$, where
  \[
      c_0 \triangleq \frac{2}{\pi}\min_{c_2>0}\frac{c_2}{\big(1-2 \, Q(c_2)\big)^2}\approx 0.5057,
      \qquad
      c_2^\star\approx 0.7518\ \text{(a minimizer)} \, ,
  \]    
  and $Q$ is the complementary CDF of the standard Gaussian. 
  Here, the test interval is chosen $\tau = {c_2^\star}/{(\pi\sqrt{\kappa}\,)}$, and the threshold $\gamma_N$ is chosen so that $c_N\to\infty$ and $c_N=o\big(\sqrt{\log N}\big) \, .
  $
  [For large $\kappa$, $\mathrm{vonMises}(0,\kappa)\approx \mathcal N(0,1/\kappa)$ so
  $p_\kappa(\tau)\to 1-2Q(c_2^\star)$ and $g\sim K(1-2Q(c_2^\star))$.
  With $\tau=c_2^\star/(\pi\sqrt{\kappa})$ and $1/\sqrt{\kappa}=K^2/(c_1N\log N)$, it follows that
  \[
      \frac{g^2}{2N\tau\log N}\sim \frac{c_1(1-2Q(c_2^\star))^2}{(2/\pi)c_2^\star}>1 \, ,
  \]
  and hence
  \eqref{eq:vm_suff_1}--\eqref{eq:vm_suff_2} are satisfied.]

  \item $K=N^{\alpha}$ with $1/2<\alpha<1$, and $\ \kappa=\Omega(1)$. Here, the test interval is chosen $\ \tau\in[\epsilon,1-\epsilon]$
  for some constant $\epsilon\in(0,1/2)$, and the threshold $\gamma_N$ is chosen so that $c_N\to\infty$ and $c_N=o\big(K/\sqrt{N}\big)$.
  [Then $p_\kappa(\tau)-\tau$ is bounded away from $0$, so $g=\Theta(K)$ and
  \eqref{eq:vm_suff_1}--\eqref{eq:vm_suff_2} hold.]
\end{itemize}
\end{corollary}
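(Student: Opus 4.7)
The plan is to verify, for each of the two regimes, the three hypotheses of Theorem~\ref{thm:vm_flat}(a): the condition $\gamma_N \geq 1+(N-1)\tau$ for $N$ sufficiently large, and the two limits \eqref{eq:vm_suff_1}--\eqref{eq:vm_suff_2}. In both bullets this reduces to determining the asymptotic orders of $g = K(p_\kappa(\tau) - \tau)$, of $N\tau$, and of the correction term $c_N\sqrt{N\tau+g}$, after which the two ratios can be estimated directly.

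For the first bullet ($K=N^\alpha$, $0<\alpha\leq 1/2$), the key input is the Gaussian approximation to the von Mises distribution. Since $1/\sqrt{\kappa} = K^2/(c_1 N\log N)$ and $K^2 \leq N$, we have $\kappa \to \infty$. Using the Bessel asymptotic $I_0(\kappa) \sim e^\kappa/\sqrt{2\pi\kappa}$ together with the Taylor expansion $\kappa(1-\cos(x/\sqrt{\kappa})) = x^2/2 + O(x^4/\kappa)$, I would first show that the density of $\sqrt{\kappa}\cdot\mathrm{vonMises}(0,\kappa)$ converges uniformly on compact sets to the standard normal density, so that $p_\kappa(c_2^\star/(\pi\sqrt{\kappa})) \to 1 - 2Q(c_2^\star)$. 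With $\tau = c_2^\star/(\pi\sqrt{\kappa}) \to 0$, this gives $g \sim K\bigl(1 - 2Q(c_2^\star)\bigr)$, while direct substitution yields $N\tau = c_2^\star K^2/(c_1\pi\log N)$. Since $K = N^\alpha$ eventually dominates $\log N$, we have $g = o(N\tau)$ and hence $\sqrt{N\tau+g} = \Theta\bigl(K/\sqrt{\log N}\bigr)$; consequently \eqref{eq:vm_suff_1} becomes $g^2/(N\tau+g) = \Theta(\log N) \to \infty$. The restriction $c_N = o(\sqrt{\log N})$ makes $c_N\sqrt{N\tau+g} = o(K) = o(g)$, so the ratio in \eqref{eq:vm_suff_2} is asymptotically
\[
    \frac{g^2}{2N\tau} \sim \frac{c_1\pi\bigl(1-2Q(c_2^\star)\bigr)^2}{2c_2^\star}\log N,
\]
and since $c_2^\star$ attains the minimum in the definition of $c_0$, the coefficient exceeds $1$ exactly when $c_1 > c_0$; thus \eqref{eq:vm_suff_2} holds. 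The inequality $\gamma_N \geq 1+(N-1)\tau$ reduces to $g - c_N\sqrt{N\tau+g} \geq 1-\tau$, which is immediate from $g\to\infty$ and $c_N\sqrt{N\tau+g}=o(g)$.

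For the second bullet ($K=N^\alpha$ with $\alpha>1/2$, $\kappa=\Omega(1)$, and $\tau\in[\epsilon,1-\epsilon]$), every relevant quantity stays on a polynomial scale. Nonuniformity of the $\mathrm{vonMises}(0,\kappa)$ density together with $\tau\in[\epsilon,1-\epsilon]$ forces $p_\kappa(\tau) - \tau$ to be bounded below by a constant depending only on $\kappa$ and $\epsilon$, so $g = \Theta(K) = \Theta(N^\alpha)$ and $N\tau = \Theta(N)$. Then $g^2/(N\tau+g) = \Theta(K^2/N) = \Theta(N^{2\alpha-1}) \to \infty$, establishing \eqref{eq:vm_suff_1}. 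With $c_N = o(K/\sqrt{N})$, the correction $c_N\sqrt{N\tau+g} = O\bigl(c_N\sqrt{N}\bigr) = o(K) = o(g)$, so the ratio in \eqref{eq:vm_suff_2} is asymptotically $g^2/(2N\tau) = \Theta(N^{2\alpha-1})$, which dominates $\log N$. The constraint $\gamma_N \geq 1+(N-1)\tau$ again reduces to $g - c_N\sqrt{N\tau+g} \geq 1-\tau$ and is trivial. The main technical obstacle across both cases is the quantitative Gaussian approximation underlying bullet~1, which is needed to convert the heuristic $p_\kappa(\tau) \approx 1-2Q(\pi\tau\sqrt{\kappa})$ into a rigorous limit uniformly in the relevant compact window; once that is in place, the remainder is bookkeeping with Chernoff-type orders and the latitude in choosing $c_N$.
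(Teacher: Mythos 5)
Your proposal is correct and follows essentially the same route as the paper's own justification: verify the hypotheses of Theorem~\ref{thm:vm_flat}(a) by using the Gaussian approximation $p_\kappa(\tau)\to 1-2Q(c_2^\star)$ to get $g\sim K(1-2Q(c_2^\star))$ in the first regime, noting $g=\Theta(K)$ in the second, and then checking that the ratio in \eqref{eq:vm_suff_2} is asymptotically $g^2/(2N\tau)=(c_1/c_0+o(1))\log N$ (respectively $\Theta(N^{2\alpha-1})$), which exceeds $\log N$ exactly when $c_1>c_0$. The only difference is that you spell out the orders of $N\tau$, $g$, and $c_N\sqrt{N\tau+g}$ and the threshold inequality $\gamma_N\ge 1+(N-1)\tau$ in more detail than the paper's bracketed remarks, which is fine.
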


\bigskip 

\begin{corollary}[Impossibility regime]
\label{cor:vm_flat_impossibility}
Under the von Mises flat model, weak detection is impossible when
$K = N^{\alpha}$ for some constant $0<\alpha<1/2$, and 
$\frac{1}{\sqrt{\kappa}}=\frac{K^2}{c\,N\log N}$
for some constant  $c<\frac{1-2\alpha}{2\sqrt{\pi}} \, .$
[
    In this scaling we have $\kappa\to\infty$, hence $R(\kappa)=I_0(2\kappa)/I_0^2(\kappa)\sim \sqrt{\pi\kappa}$
    and $\log R(\kappa)\sim \frac12\log\kappa$. Therefore the converse condition \eqref{eq:vm_conv} is implied by
    \[
        \frac{K^2}{N}\sqrt{\pi\kappa} \, \leq \, \frac12\log\kappa-\omega(1).
    \]
    With $1/\sqrt{\kappa}=K^2/(cN\log N)$, the left-hand side equals $\sqrt{\pi}\,c^{-1}\log N$, while the right-hand side satisfies 
    \[
        \frac12\log\kappa=\log(cN\log N/K^2)=(1-2\alpha)\log N+O(\log\log N).
    \]    
    Thus, if $c<(1-2\alpha)/(2\sqrt{\pi})$, the inequality holds with an $\omega(1)$ slack, and \eqref{eq:vm_conv} follows.
]
\end{corollary}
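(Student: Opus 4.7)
The plan is to verify the converse condition \eqref{eq:vm_conv} of Theorem~\ref{thm:vm_flat}(b) under the stated scaling; the impossibility of weak detection is then an immediate consequence of that theorem. So the entire task reduces to showing that $\frac{K^2}{N}\bigl(R(\kappa)-1\bigr) - \log R(\kappa) \to -\infty$.

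First I would verify that we are in the large-concentration regime $\kappa\to\infty$. From $1/\sqrt{\kappa} = K^2/(cN\log N)$ together with $K=N^{\alpha}$ and $\alpha<1/2$, we have $1/\sqrt{\kappa}=N^{2\alpha-1}/(c\log N)\to 0$, so $\kappa\to\infty$. In this regime the classical Bessel asymptotic $I_0(\kappa)\sim e^{\kappa}/\sqrt{2\pi\kappa}$ applies, and a direct calculation yields
\[
R(\kappa)\;=\;\frac{I_0(2\kappa)}{I_0^2(\kappa)}\;\sim\;\sqrt{\pi\kappa},\qquad \log R(\kappa)\;=\;\tfrac12\log\kappa+O(1).
\]
Since $\sqrt{\pi\kappa}\to\infty$, one may freely replace $R(\kappa)-1$ by $R(\kappa)$ at leading order, so the converse condition reduces to showing that
\[
\frac{K^2}{N}\sqrt{\pi\kappa}\;-\;\tfrac{1}{2}\log\kappa\;\longrightarrow\;-\infty.
\]

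The remainder is an algebraic check. Substituting $\sqrt{\kappa}=cN\log N/K^2$ collapses the first term to $\sqrt{\pi}\,c\log N$. On the other side, from $\kappa=(cN\log N/K^2)^{2}$ and $K^{2}=N^{2\alpha}$ one gets $\tfrac{1}{2}\log\kappa=\log(cN\log N/K^{2})=(1-2\alpha)\log N+O(\log\log N)$. Therefore the quantity to control equals $\bigl(\sqrt{\pi}\,c-(1-2\alpha)\bigr)\log N+O(\log\log N)$, which tends to $-\infty$ provided the coefficient of $\log N$ is strictly negative. The hypothesis $c<(1-2\alpha)/(2\sqrt{\pi})$ makes this coefficient strictly negative with ample room, completing the reduction.

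I do not anticipate any serious obstacle: the proof is essentially substitution into \eqref{eq:vm_conv} once the correct Bessel asymptotic is in hand. The only point requiring care is tracking the additive $O(1)$ and $O(\log\log N)$ errors coming from the Bessel expansion and from rewriting $\log\kappa$, so as to confirm they are genuinely negligible compared to the leading $\log N$ term. Because the slack provided by the hypothesis is of order $\log N$, this bookkeeping goes through cleanly.
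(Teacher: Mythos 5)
Your proposal is correct and follows essentially the same route as the paper: verify the converse condition \eqref{eq:vm_conv} of Theorem~\ref{thm:vm_flat}(b) via the large-$\kappa$ asymptotic $R(\kappa)\sim\sqrt{\pi\kappa}$ and then substitute the scaling of $\kappa$. Your evaluation of the first term as $\sqrt{\pi}\,c\log N$ is the correct one (the paper's bracketed remark writes $\sqrt{\pi}\,c^{-1}\log N$, which appears to be a typo), and the stated hypothesis $c<(1-2\alpha)/(2\sqrt{\pi})$ indeed makes the coefficient of $\log N$ strictly negative, as you note.
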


\subsection{Achievability for von Mises flat model -- interval test}
\label{sec:proof_vm_flat}

Fix $\tau\in(0,1)$ and write $p_\kappa(\tau)$ and $g$ as in the theorem statement.
We apply the interval test with window length $2\pi\tau$ and threshold
\(
    \gamma_N \triangleq N\tau + g - c_N\sqrt{N\tau+g}.
\)

\emph{False alarm.}
Under $H_0$ the observations are i.i.d.\ uniform on $[0,2\pi)$, exactly as in the hard-cluster flat model.
Hence the false alarm probability of the interval test is identical to that case.
In particular, as in \eqref{eq:pfa_bnd},
whenever $\gamma_N \ge 1+(N-1)\tau$,
\begin{align} \label{eq:pfa_vm_repeat}
    p_{\fa}
    \, \leq \, 
    N\cdot \exp\left\{
    -\frac{\big(\gamma_N-1-(N-1)\tau\big)^2}{1+(N-1)\tau+\gamma_N}
    \right\}.
\end{align}
%
%
Noting that
\[
    \gamma_N-1-(N-1)\,\tau
    =
    g-c_N\sqrt{N\tau+g}-(1-\tau),
    ~~~~~~
    1+(N-1)\,\tau+\gamma_N
    =
    2N\tau+g-c_N\sqrt{N\tau+g} \,+\, O(1),
\]
we see that condition \eqref{eq:vm_suff_2} implies that the exponent in \eqref{eq:pfa_vm_repeat}
dominates $\log N$, and hence $p_{\fa}\to 0$.

\medskip
\emph{Missed detection.}
Under $H_1$, consider the test interval of length $2\pi\tau$ centered at the planted phase $\Theta^*$,
namely $[\Theta^*-\pi\tau,\Theta^*+\pi\tau]$ (mod $2\pi$).
Let $Y$ be the number of observations falling in this window.  Under $H_1$, $Y$ decomposes as
\[
    Y=Y_0+Y_1 \, ,
    ~~~~~~
    Y_0\sim \Binom\big(K,p_\kappa(\tau)\big) \, ,
    ~~~~~~
    Y_1\sim \Binom\big(N-K,\tau\big) \, ,
\]
with $Y_0$ and $Y_1$ independent.  In particular,
\[
    \mu_1 \triangleq \E_{\P}[Y] = (N-K)\tau + Kp_\kappa(\tau) = N\tau + g.
\]
Since the interval test rejects $H_0$ whenever $Y \geq \gamma_N$, we have
\[
    p_{\miss}
    \, \leq \,
    \P\{Y<\gamma_N\}
    \, = \,
    \P\{Y_0+Y_1<\gamma_N\} \, .
\]
Because $Y_0+Y_1$ is a sum of independent Bernoulli random variables, we may apply the standard Chernoff lower-tail bound (Poisson-binomial bound).  Writing $\gamma_N=(1-\eta)\mu_1$ yields
\[
    p_{\miss}
    \leq
    \exp\left(-\frac{\eta^2}{2}\mu_1\right)
    =
    \exp\left(
    -\frac{(\mu_1-\gamma_N)^2}{2\mu_1}
    \right).
\]
Substituting $\mu_1=N\tau+g$ and $\gamma_N=N\tau+g-c_N\sqrt{N\tau+g}$ gives
\begin{align} \label{eq:pmiss_vm}
    p_{\miss}
    \,\leq\,
    \exp\left(-\frac{c_N^2}{2}\right)
    \, \to \, 0 \,,
\end{align}
since $c_N\to\infty$.  Combining with $p_{\fa}\to 0$ proves strong detection. This proves part (a) of Theorem~\ref{thm:vm_flat}.

\medskip 

\subsection{Converse for von Mises flat model}

We bound $\Var_{\Q}(L)$, where $L=\frac{\d\P}{\d\Q}$. Since $\E_{\Q}[L] = 1$ and $\TV(\P,\Q) \leq \frac12\sqrt{\Var_{\Q}(L)}$,
it suffices to show that $\E_{\Q}[L^2]\to 1$.

For fixed $(S,\theta)$, the conditional likelihood ratio is
\begin{align} \label{eq:vm_LS}
    L_{S,\theta}(X)
    \, = \,
    I_0(\kappa)^{-K}\,
    \exp\left(\kappa\sum_{i\in S}\cos(X_i-\theta)\right).
\end{align}
Fix $S,\theta,S',\theta'$ with $|S|=|S'|=K$ and let $j=|S\cap S'|$.  Then
\begin{align}
    \E_{\Q}\big[L_{S,\theta}(X)L_{S',\theta'}(X)\big]
    & =
    I_0(\kappa)^{-2K}\cdot 
    \E_{\Q}\Big[
        \exp\Big(
        \kappa\sum_{i\in S}\cos(X_i-\theta)
        +
        \kappa\sum_{i\in S'}\cos(X_i-\theta')
        \Big)
    \Big]\nonumber\\
    &=
    I_0(\kappa)^{-2K}\cdot 
    \E_{\Q} \Big[
        \exp \Big(
        \kappa\sum_{i\in S\cap S'}\big(\cos(X_i-\theta)+\cos(X_i-\theta')\big)
        \Big)
    \Big]\label{eq:vm_step_a}\\
    &\hspace{1.86cm}\times
    \E_{\Q} \Big[
        \exp \Big(
        \kappa\sum_{i\in S\setminus S'}\cos(X_i-\theta)
        +
        \kappa\sum_{i\in S'\setminus S}\cos(X_i-\theta')
        \Big)
    \Big]\nonumber\\
    &=
    I_0(\kappa)^{-2j}\cdot 
    \E_{\Q}\left[
        \exp\left(
        \kappa\big(\cos(X_1-\theta)+\cos(X_1-\theta')\big)
        \right)
    \right]^j,\nonumber
    \end{align}
where in \eqref{eq:vm_step_a} we used independence under $\Q$, and in the last step we used that the
$X_i$ are i.i.d.\ uniform on $[0,2\pi)$, so each of the $j$ intersection terms has the same expectation.

Using the identity $\cos a+\cos b = 2\,\cos\big(\frac{a-b}{2}\big)\,\cos\big(\frac{a+b}{2}\big)$, we get
\[
    \cos(X_1-\theta)+\cos(X_1-\theta')
    =
    2\cos\Big(\frac{\theta-\theta'}{2}\Big)
    \cdot \cos\Big(X_1-\frac{\theta+\theta'}{2}\Big) \, ,
\]
and therefore
\begin{align*}
    \E_{\Q}\big[
        \exp\left(
        \kappa\big(\cos(X_1-\theta)+\cos(X_1-\theta')\big)
        \right)
    \big]
    &=
    \E_{\Q}\Big[
        \exp\Big(
        2\kappa\cos\Big(\frac{\theta-\theta'}{2}\Big) \cdot 
        \cos\Big(X_1-\frac{\theta+\theta'}{2}\Big) \Big)
    \Big]\\
    &=
    I_0\Big(2\kappa\cos\Big(\frac{\theta-\theta'}{2}\Big)\Big) \, ,
\end{align*}
since $X_1-(\theta+\theta')/2$ is uniform on $[0,2\pi)$ under $\Q$.
Hence
\begin{align}
\label{eq:vm_Eprod}
    \E_{\Q}\Big[L_{S,\theta}(X)L_{S',\theta'}(X)\Big]
    =
    \Bigg[
        \frac{I_0\left(2\kappa\cos\Big(\frac{\theta-\theta'}{2}\right)\Big)}
        {I_0^2(\kappa)}
    \Bigg]^j.
\end{align}
To compute $\E_{\Q}[L^2]$, we view $S,\theta,S',\theta'$ as independent and uniform over their domains. Letting $\vartheta\triangleq ({\theta-\theta'})/{2}$, we may write
\begin{align}
    \E_{\Q}[L^2]
    &=
    \frac{1}{2\pi}\int_0^{2\pi}
    \sum_{j=0}^K
    \P\{|S\cap S'|=j\}
    \left[\frac{I_0(2\kappa\cos\vartheta)}{I_0^2(\kappa)}\right]^j
    \,\d\vartheta.
    \label{eq:EQL2_start}
\end{align}
We bound the sum over $j$ using the fact that $|S\cap S'|$ is hypergeometric$(N,K,K)$ and is
second-order dominated by $\Binom\left(K,\frac{K}{N}\right)$ (Hoeffding~\cite{hoeffding1963probability}).
Since $z\mapsto z^j$ is convex for $z\geq 0$, this implies that for all $z\geq 0$,
\[
    \sum_{j=0}^K \P\{|S\cap S'|=j\} \cdot z^j
    \, \leq \,
    \sum_{j=0}^K
    \P\left\{\Binom \left(K,\frac{K}{N}\right)=j\right\} \cdot z^j
    \, = \,
    \left(1-\frac{K}{N}+\frac{Kz}{N}\right)^K.
\]
Applying this with $z=\rho_\kappa(\vartheta)$, where
\[
    \rho_\kappa(\vartheta)\triangleq \frac{I_0(2\kappa\cos\vartheta)}{I_0^2(\kappa)} \, ,
\]
and then using $1+x\leq e^x$ yields
\begin{align}
    \E_{\Q}[L^2]
    &\leq
    \frac{1}{2\pi}\int_0^{2\pi}
    \left(1+\frac{K}{N}\big(\rho_\kappa(\vartheta)-1\big)\right)^K
    \,\d\vartheta \nonumber\\
    &\leq
    \frac{1}{2\pi}\int_0^{2\pi}
    \exp\left(\frac{K^2}{N}\big(\rho_\kappa(\vartheta)-1\big)\right)
    \,\d\vartheta
    =
    e^{-\frac{K^2}{N}}\cdot
    \frac{1}{2\pi}\int_0^{2\pi}
    \exp\left(\frac{K^2}{N}\rho_\kappa(\vartheta)\right)
    \,\d\vartheta \, .
    \label{eq:EQL2_half}
\end{align}
Let $R(\kappa)\triangleq \max_{\vartheta}\rho_\kappa(\vartheta)=\rho_\kappa(0)=\frac{I_0(2\kappa)}{I_0^2(\kappa)}$.
For $x\in[0,R(\kappa)]$ and $a>0$, convexity of $x\mapsto e^{ax}$ on $[0,R(\kappa)]$ implies
\[
    e^{ax}
    \leq
    \left(1-\frac{x}{R(\kappa)}\right)e^{0}
    +\frac{x}{R(\kappa)}e^{aR(\kappa)}
    =
    1+\frac{e^{aR(\kappa)}-1}{R(\kappa)}\,x \, .
\]
Applying this with $a=K^2/N$ and $x=\rho_\kappa(\vartheta)$ in \eqref{eq:EQL2_half} gives
\begin{align}
    \E_{\Q}[L^2]
    &\leq
    e^{-\frac{K^2}{N}}\cdot
    \frac{1}{2\pi}\int_0^{2\pi}
    \left(
        1+\frac{e^{\frac{K^2}{N}R(\kappa)}-1}{R(\kappa)}\,\rho_\kappa(\vartheta)
    \right)
    \,\d\vartheta \, .
\label{eq:EQL2_two_thirds}
\end{align}

\begin{lemma}
\label{lem:vm_rho_mean_one}
For every $\kappa>0$,
\[
    \frac{1}{2\pi}\int_0^{2\pi}\rho_\kappa(\vartheta)\,\d\vartheta = 1 \, .
\]
\end{lemma}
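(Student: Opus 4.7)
My plan is to reduce the lemma to the classical Bessel-function identity
\[
    I_0^2(\kappa) \,=\, \frac{1}{2\pi}\int_0^{2\pi} I_0(2\kappa \cos\vartheta)\, \d\vartheta,
\]
after which the statement follows by dividing both sides by $I_0^2(\kappa)$ and using the definition of $\rho_\kappa$.

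For the main step, I would start from the integral representation $I_0(\kappa) = \frac{1}{2\pi}\int_0^{2\pi} e^{\kappa \cos u}\, \d u$ and write $I_0^2(\kappa)$ as a double integral over $(u,v)\in[0,2\pi)^2$. Applying the sum-to-product identity $\cos u + \cos v = 2\cos\bigl(\tfrac{u-v}{2}\bigr)\cos\bigl(\tfrac{u+v}{2}\bigr)$ and then substituting $w = u - v$ with $v$ held fixed, the inner integral over $v$ evaluates to $I_0\bigl(2\kappa \cos(w/2)\bigr)$ by $2\pi$-periodicity. The substitution $\vartheta = w/2$, combined with the symmetry $I_0\bigl(2\kappa \cos(\pi - \vartheta)\bigr) = I_0(2\kappa \cos\vartheta)$ (since $I_0$ is even), folds the integration domain from $[0,\pi)$ to $[0,2\pi)$ with the correct $1/(2\pi)$ normalization.

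An alternative route is probabilistic and reuses computation \eqref{eq:vm_Eprod}. Writing $L_{\{i\},\theta}(X) \triangleq I_0(\kappa)^{-1} e^{\kappa \cos(X_i - \theta)}$ for the single-sample vonMises-to-uniform likelihood ratio, the derivation of \eqref{eq:vm_Eprod} specializes (via the same sum-to-product step) to $\E_{\Q}\bigl[L_{\{i\},\theta}(X)\,L_{\{i\},\theta'}(X)\bigr] = \rho_\kappa\bigl((\theta-\theta')/2\bigr)$. Averaging over $\theta'$ uniform on $[0,2\pi)$ and exchanging order of integration via Fubini, the right-hand side becomes $\E_{\Q}\bigl[L_{\{i\},\theta}(X) \cdot 1\bigr] = 1$, because $\frac{1}{2\pi}\int_0^{2\pi} L_{\{i\},\theta'}(X)\,\d\theta' \equiv 1$ (mixing a vonMises density uniformly in its location parameter returns the uniform density). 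A change of variable $\vartheta = (\theta - \theta')/2$ then yields the claim.

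The only subtle step in either approach is the bookkeeping around periodicity and the Jacobian introduced by $\vartheta = w/2$ (equivalently $\vartheta = (\theta-\theta')/2$): one must verify that $I_0(2\kappa\cos\vartheta)$ is $\pi$-periodic via the evenness of $I_0$ so that the final normalization lands at $1/(2\pi)$ over $[0,2\pi)$ rather than at $1/\pi$ over $[0,\pi)$. This is routine but easy to miscount.
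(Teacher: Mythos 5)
Your first route is correct and is essentially the paper's own proof run in reverse: the paper starts from $\frac{1}{2\pi}\int_0^{2\pi} I_0(2\kappa\cos\vartheta)\,\d\vartheta$, expands $I_0(2\kappa\cos\vartheta)$ via its integral representation, and applies the product-to-sum identity $2\cos(a)\cos(b)=\cos(a+b)+\cos(a-b)$ to recover $I_0^2(\kappa)$, which is the same double-integral manipulation you describe from the other end (and yes, the $\pi$-periodicity of $\vartheta\mapsto I_0(2\kappa\cos\vartheta)$ via evenness of $I_0$ is exactly what makes the normalization come out right). Your second, probabilistic route is also valid and slightly different in spirit: given \eqref{eq:vm_Eprod}, the identity $\frac{1}{2\pi}\int_0^{2\pi}L_{\{i\},\theta'}(x)\,\d\theta'\equiv 1$ plus Fubini gives the result immediately, which is a clean way to see \emph{why} the lemma holds, though the trigonometric work is the same as what already went into \eqref{eq:vm_Eprod}.
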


\begin{proof}
Using $2\cos(a)\cos(b)=\cos(a+b)+\cos(a-b)$, we have
\begin{align*}
    \frac{1}{2\pi}\int_0^{2\pi} I_0 \big(2\kappa\cos\vartheta\big)\,\d\vartheta
    &=
    \frac{1}{(2\pi)^2}\int_0^{2\pi}\int_0^{2\pi}
    \exp\Big(2\kappa\cos\vartheta\cos(\varphi-\vartheta)\Big)
    \,\d\varphi\,\d\vartheta
    \\
    &=
    \frac{1}{(2\pi)^2}\int_0^{2\pi}\int_0^{2\pi}
    \exp\Big(\kappa\cos\varphi+\kappa\cos(2\vartheta-\varphi)\Big)
    \,\d\vartheta\,\d\varphi
    \\
    &=
    I_0^2(\kappa) \, ,
\end{align*}
where in the last step we used the integral representation of $I_0(\kappa)$ twice.
Dividing both sides by $I_0^2(\kappa)$ proves the claim.
\end{proof}

Applying Lemma~\ref{lem:vm_rho_mean_one} to \eqref{eq:EQL2_two_thirds} yields
\begin{align*}
    \E_{\Q}[L^2]
    & \,\leq \,
    e^{-\frac{K^2}{N}}
    \left(
    1+\frac{e^{\frac{K^2}{N}R(\kappa)}-1}{R(\kappa)}
    \right)
    \,=\,
    \frac{e^{\frac{K^2}{N}(R(\kappa)-1)}}{R(\kappa)}
    +\left(1-\frac{e^{-\frac{K^2}{N}}}{R(\kappa)}\right).
\end{align*}
Therefore a sufficient condition for $\E_{\Q}[L^2]\to 1$ (and hence $\Var_{\Q}(L)\to 0$ and
$\TV(\P,\Q)\to 0$) is
\[
    \frac{K^2}{N}\big(R(\kappa)-1\big)-\log R(\kappa)\to -\infty \, ,
\]
which is exactly \eqref{eq:vm_conv}. This proves part (b) of Theorem~\ref{thm:vm_flat}.

\section{The hard-cluster community model} \label{sec-hard-cluster-community}

\begin{theorem} \label{thm:hard_cluster_community}
Consider the hard-cluster community model with parameters $(n,k,\tau)$.
\begin{itemize}
    \item[$\mathrm{(a)}$] (Achievability)
    The interval test with window length $2\pi\tau$ and threshold $k$ yields $p_{\miss}=0$ and
\begin{align}
p_{\fa} \leq \exp\left(\log \frac { \binom k 2 } {\tau} + k\left\{ \log\frac n k +1 + \frac{k-1} 2\log \tau \right\}  \right).
\label{eq:pfa_hardc_comm_interval}
\end{align}
Thus, strong detection is achieved if the right hand side of \eqref{eq:pfa_hardc_comm_interval} converges to zero.
    \item[$\mathrm{(b)}$] (Converse)
    If
    \begin{align}
        \label{eq:uniform_comm_converse_condition}
        \frac{k^2}{n}\left(\left(\frac{1}{\tau}\right)^{\frac{k-1}{2}}-1\right) \to 0
        \qquad\text{as } n\to\infty \, ,
    \end{align}
    then weak detection is impossible.
\end{itemize}
\end{theorem}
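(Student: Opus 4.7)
The plan for part (a) is a direct union bound. Under $H_1$, the $\binom{k}{2}$ edges inside $C^*$ all have phases in the arc $[\Theta^*,\Theta^*+2\pi\tau]$, so the pair $(C^*,\Theta^*)$ witnesses $T_{\mathrm{int}}^{\mathrm{comm}}(X)=1$ and $p_{\miss}=0$. For the false alarm I would union-bound over the $\binom{n}{k}$ candidate subsets $C$. For any fixed $C$ the $m=\binom{k}{2}$ edge phases are i.i.d.\ uniform under $\Q$, and the circular scan bound (a union bound over which of the $m$ points is the left endpoint of the covering arc) gives that all $m$ points fit in some arc of length $2\pi\tau$ with probability at most $m\,\tau^{m-1}$. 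Combining with $\binom{n}{k}\le(ne/k)^k$ and taking logarithms produces the inequality~\eqref{eq:pfa_hardc_comm_interval}.

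For part (b) I would follow the second-moment template of Section~\ref{sec-2-converse}: show $\Var_{\Q}(L)\to 0$ for $L=\d\P/\d\Q$ and conclude via $\TV(\P,\Q)\le\tfrac12\sqrt{\Var_{\Q}(L)}$. With the conditional likelihood
\[
    L_{C,\theta}(X)=\tau^{-\binom{k}{2}}\,\mathbf{1}\{X_e\in[\theta,\theta+2\pi\tau]\text{ for all }e\in E(C)\},
\]
and $J=|C\cap C'|$ hypergeometric$(n,k,k)$, the only change relative to the flat converse is bookkeeping: the shared edge set $E(C)\cap E(C')=E(C\cap C')$ has size $\binom{J}{2}$ in place of $J$. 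The same arc-overlap computation gives $\E_{\Q}[L_{C,\theta}L_{C',\theta'}]=\delta^{\binom{J}{2}}\tau^{-2\binom{J}{2}}$, and averaging $\theta,\theta'$ via~\eqref{eq:delta_moment} with exponent $\binom{J}{2}$ in place of $j$ (assuming $\tau\le 1/2$ as in the flat case; the regime $\tau>1/2$ is handled similarly using the second branch of~\eqref{eq:delta_general}) yields
\[
    \E_{\Q}[L^2]=\P\{J\le1\}+\sum_{j=2}^k\P\{J=j\}\,\frac{2\tau^{1-\binom{j}{2}}}{\binom{j}{2}+1}.
\]
A crucial observation is that the $j=2$ term equals $\P\{J=2\}$ exactly (since $\binom{2}{2}=1$ makes the ratio $2\tau^0/2=1$), so it cancels in $\Var_{\Q}(L)=\E_{\Q}[L^2]-1$, leaving
\[
    \Var_{\Q}(L)\,\le\,\sum_{j=3}^k\P\{J=j\}\,\tau\,r^{\binom{j}{2}},\qquad r\triangleq 1/\tau.
\]

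The main obstacle is that $\binom{j}{2}$ is quadratic in $j$, so the integrate-in-$z$ trick that handled the linear exponent $j-1$ in the flat converse cannot be applied directly. I would resolve this with the elementary bound $\binom{j}{2}\le(k-1)j/2$ valid for $j\le k$, giving $r^{\binom{j}{2}}\le w^j$ with $w\triangleq r^{(k-1)/2}$. Since $j\mapsto w^j$ is convex, the Hoeffding convex-order comparison between hypergeometric$(n,k,k)$ and $\Binom(k,k/n)$ (the same tool used in the flat converse) yields $\sum_j\P\{J=j\}w^j\le(1+k(w-1)/n)^k\le\exp(k^2(w-1)/n)$. Subtracting the $j\le 2$ contributions and dropping the resulting nonnegative $\P\{J=1\}(w-1)$ and $\P\{J=2\}(w^2-1)$ terms gives
\[
    \sum_{j=3}^k\P\{J=j\}\,w^j\,\le\,\bigl(\exp(k^2(w-1)/n)-1\bigr)+\P\{J\ge3\}.
\]
The hypothesis \eqref{eq:uniform_comm_converse_condition} makes the first summand $o(1)$; moreover, since $w\ge 2$ whenever $\tau\le 1/2$ and $k\ge 3$, the same hypothesis forces $k^2/n\to 0$, so the moment identity $\E[\binom{J}{3}]=\binom{k}{3}^2/\binom{n}{3}$ gives $\P\{J\ge 3\}=O(k^6/n^3)=o(1)$. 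Multiplying by $\tau\le 1$ preserves the limits, yielding $\Var_{\Q}(L)\to 0$ and hence the impossibility of weak detection.
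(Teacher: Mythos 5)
Your part (a) is correct and is essentially the paper's argument verbatim: $p_{\miss}=0$ by construction, a union bound over the $\binom{n}{k}$ candidate communities, and the circular scan bound $m\,\tau^{m-1}$ with $m=\binom{k}{2}$ for each fixed community. Part (b) follows the paper's second-moment template, and your steps for $\tau\le 1/2$ --- the identity $\E_{\Q}[L_{C,\theta}L_{C',\theta'}]=\delta^{\binom{J}{2}}\tau^{-2\binom{J}{2}}$, the exact $\theta$-average, the observation that the $j\le 2$ terms cancel, the linearization $\binom{j}{2}\le\frac{k-1}{2}j$, and the Hoeffding convex-order comparison --- are all sound, including the moment bound $\P\{J\ge3\}\le\binom{k}{3}^2/\binom{n}{3}$.

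The gap is in the deferred case $\tau>1/2$, which is not in fact ``handled similarly.'' Because you dropped the $-1$ from $\Var_{\Q}(L)=\sum_j\P\{J=j\}(\cdot-1)$ before applying the convex-order bound, you are forced to control the leftover term $\P\{J\ge3\}$, and your justification for $\P\{J\ge3\}\to0$ rests on ``$w\ge2$ whenever $\tau\le1/2$ and $k\ge3$,'' which forces $k^2/n\to0$. For $\tau>1/2$ this breaks: $w=\tau^{-(k-1)/2}$ can be arbitrarily close to $1$, so \eqref{eq:uniform_comm_converse_condition} no longer implies $k^2/n\to0$. For instance, $k=n^{3/5}$ and $\tau=1-n^{-1}$ give $w-1\sim\frac{k-1}{2n}$ and $\frac{k^2}{n}(w-1)\sim\frac12 n^{-1/5}\to0$, yet $k^2/n\to\infty$ and $\P\{J\ge3\}\to1$, so your final bound does not vanish. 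This is not a vacuous corner: the $k=\omega(\log n)$, $\tau\to1$ impossibility regime of Corollary~\ref{cor:uniform_comm_impossibility} lives exactly there. The repair is to avoid splitting off the small-overlap terms: since $\E[\delta^m]\le\tau^{m-1}\E[\delta]=\tau^{m+1}$ for every $\tau\in(0,1)$, each summand $\tau^{-2\binom{j}{2}}\E_{\theta,\theta'}[\delta^{\binom{j}{2}}]$ is at most $\tau\, (1/\tau)^{\binom{j}{2}}\le w^j$ for $j\ge 3$ and equals $1\le w^j$ for $j\le2$, whence
\[
\Var_{\Q}(L)\ \le\ \E\big[w^{J}\big]-1\ \le\ \exp\Big(\tfrac{k^2}{n}(w-1)\Big)-1\ \to\ 0
\]
with no case split on $\tau$ and no need to control $\P\{J\ge3\}$. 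This is the paper's route, which moreover bypasses the $\theta$-average entirely by using $\delta\le\tau$ pointwise.
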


\bigskip 

\begin{corollary}[Achievability regimes]
\label{cor:uniform_comm_achievability}
Under the hard-cluster community model, each of the following conditions implies strong detection by the interval test:
\begin{itemize}
    \item $k=c\log n$ for a constant $c>0$ and $\ \tau \leq e^{-2/c} \, .$

    \item $3/\epsilon < k \leq n$ and $\ \tau \le \left(\frac{k}{ne}\right)^{\frac{2+\epsilon}{k-1}}$
    for a fixed $\epsilon\in (0,1) \, .$  

         \item  $k=\omega(\log n)$ and $\tau \leq 1 - \frac{2(1+\epsilon)}{k-1}\log \frac{ne}{k}$  
    
    \item $3\le k\le n$ and $\tau = \left( \frac k {ne} \right)^{\frac{(1+\epsilon)k}{\binom k 2 -1}}$ for a fixed  $\epsilon>0 \, .$

     \item   $3/\epsilon <  k = o(n)$ and $\tau =  \tau_{\epsilon} \triangleq \left( \frac k n \right)^{\frac{2+\epsilon}{k-1}}$ for some $\epsilon\in (0,1) \, .$
\end{itemize}
\end{corollary}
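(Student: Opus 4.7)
The plan is to apply Theorem~\ref{thm:hard_cluster_community}(a), which already yields $p_{\miss}=0$, and reduce each of the five regimes to verifying that the exponent in~\eqref{eq:pfa_hardc_comm_interval} tends to $-\infty$. First I would collect the two occurrences of $\log\tau$ and absorb the additive $k$ into a single logarithm, rewriting the exponent in the standardized form
\[
    \Lambda \;\triangleq\; \log\binom{k}{2} \,+\, k\log\frac{ne}{k} \,+\, \left(\binom{k}{2}-1\right)\log\tau \, .
\]
The structural observation driving the whole proof is that in every regime the prescribed $\tau$ is calibrated so that the negative contribution $\bigl(\binom{k}{2}-1\bigr)\log\tau$ slightly overshoots the positive term $k\log(ne/k)$; the residual excess then carries $\Lambda$ to $-\infty$, overwhelming the vestigial $\log\binom{k}{2}$.

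The calculation is then a case-by-case substitution. The fourth regime is cleanest: $\tau = (k/(ne))^{(1+\epsilon)k/(\binom{k}{2}-1)}$ gives $\bigl(\binom{k}{2}-1\bigr)\log\tau = -(1+\epsilon)k\log(ne/k)$ exactly, so $\Lambda = \log\binom{k}{2} - \epsilon k\log(ne/k)$, which tends to $-\infty$ as $n\to\infty$ since $k\log(ne/k) \geq k \geq 3$ while $\log\binom{k}{2}=O(\log k)$. For the second and fifth regimes I would use the algebraic identity $(k(k-1)-2)/(k-1)=k-2/(k-1)$ to isolate, after substitution of $\tau=(k/(ne))^{(2+\epsilon)/(k-1)}$ (resp.\ $\tau=(k/n)^{(2+\epsilon)/(k-1)}$), a dominant negative term of the form $-\frac{\epsilon k}{2}\log(ne/k)$ (resp.\ $-\frac{\epsilon k}{2}\log(n/k)$); the hypothesis $k>3/\epsilon$ with $\epsilon\in(0,1)$ is exactly what ensures that the bracket $-\frac{\epsilon k}{2}+\frac{2+\epsilon}{k-1}$ multiplying $\log(ne/k)$ is negative and of order $k$. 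The third regime, with $\tau$ near $1$, is handled by the tangent bound $\log\tau \leq -(1-\tau)$ applied to the hypothesis on $1-\tau$; this reduces it to essentially the same analysis as the fourth, yielding $\Lambda \leq \log\binom{k}{2} - \epsilon k\log(ne/k) + O(1)\to -\infty$ provided $k = \omega(\log n)$.

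The first regime, $k=c\log n$ with $\tau\leq e^{-2/c}$, is the borderline case and is where I expect the main obstacle to appear. Here substitution produces two terms of magnitude $c\log^2 n$ that cancel at leading order, so one must keep a next-order expansion $k\log(ne/k)=c\log^2 n - c\log n\log\log n + O(\log n)$ and combine it with $\bigl(\binom{k}{2}-1\bigr)\log\tau \leq -(k(k-1)-2)/c = -c\log^2 n + O(\log n)$ to extract the surviving residual $-c\log n\log\log n$. Confirming that this residual dominates $\log\binom{k}{2}=O(\log\log n)$ then gives $\Lambda \to -\infty$, completing all five cases. Throughout, one should also check that the prescribed $\tau$ lies in $(0,1)$ (in particular, that $1 - \frac{2(1+\epsilon)}{k-1}\log(ne/k) > 0$ in Case~3, which follows from $k=\omega(\log n)$), but this is a minor sanity check rather than a real obstacle.
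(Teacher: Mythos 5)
Your proposal is correct and follows essentially the same route as the paper: apply Theorem~\ref{thm:hard_cluster_community}(a), rewrite the exponent of~\eqref{eq:pfa_hardc_comm_interval} as $\log\binom{k}{2}+k\log\frac{ne}{k}+\big(\binom{k}{2}-1\big)\log\tau$, and verify case by case that it tends to $-\infty$ (the paper handles the third regime by reducing it to the second via $1-cx\le e^{-x'}$ rather than your $\log\tau\le-(1-\tau)$, but this is cosmetic). The only nit is in the fourth regime: for fixed $k$ the stated inequalities $k\log(ne/k)\ge k\ge 3$ alone do not give divergence — one should add that $\log(ne/k)\to\infty$ there — but this is immediate and not a gap.
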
 

\begin{proof}[Proof of Corollary \ref{cor:uniform_comm_achievability}]
For $k=c\log n$ and $\tau = e^{-2/c}$ the bound
\eqref{eq:pfa_hardc_comm_interval} becomes
\begin{align*}
    p_{\fa} & \leq \exp\left(\log \frac { \binom k 2}  {\tau} + c\log n \left\{ \log\frac n k +1 + \frac{c\log n-1} 2\left(-\frac 2 c\right) \right\}  \right) \\
    & = \frac 1 \tau \exp\left( \log\binom k 2 + (\log n )(-c \log k + c +1)   \right) \to 0 \, .
\end{align*}
For  $\tau = \left( \frac k {ne} \right)^{\frac{2+\epsilon}{k-1}}$  the bound \eqref{eq:pfa_hardc_comm_interval} becomes:
\begin{align*} 
    p_{\fa} & \leq \exp\left(\frac{2+\epsilon}{k-1} \log \frac n k + \log  \binom k 2  + k\left\{ -\frac {\epsilon} 2\log\frac n k -\frac {\epsilon} 2 \right\}  \right) \\
    & = \exp\left(\left[-\frac{k\epsilon} 2 +\frac{2+\epsilon}{k-1} \right]\log \frac n k + \log  \binom k 2  - \frac{\epsilon k} 2   \right) \\
    & \to 0 \mbox{ if } 3/\epsilon < k \leq n \, .
\end{align*}
For $\tau \leq 1 - \frac{2(1+\epsilon)}{k-1}\log \frac{ne}{k}$ and $k=\omega(\log n)$, since
\[
    \frac{2(1+\epsilon)}{k-1}\log \frac{ne}{k} \to 0 \, , ~~\mbox{and } ~~
    \frac{2(1+\epsilon)}{k-1}\log \frac{ne}{k} = \left(\frac{1+\epsilon}{1 + \epsilon/2 }\right)\frac{2+\epsilon}{k-1}\log \frac{ne}{k}\,,
\] 
it follows that $1 - \frac{2(1+\epsilon)}{k-1}\log \frac{ne}{k} \leq  \left(\frac{k}{ne}\right)^{\frac{2+\epsilon}{k-1}}$ so $p_{\fa}\to 0$ by the previous case.

For  $\tau =  \left( \frac k {ne} \right)^{\frac{(1+\epsilon)k}{\binom k 2 -1}}$  and $3\leq k \leq n$ the bound \eqref{eq:pfa_hardc_comm_interval} becomes:
\begin{align*} 
    p_{\fa} & \leq \exp\left( \log  \binom k 2  - \epsilon k \log \frac{ne} k  \right) \to 0 \, . 
\end{align*}
For  $\tau =  \tau_{\epsilon} \triangleq 
\left( \frac k n \right)^{\frac{2+\epsilon}{k-1}}$  the bound \eqref{eq:pfa_hardc_comm_interval} becomes:
\begin{align*} 
    p_{\fa} & \leq \exp\left(\frac{2+\epsilon}{k-1} \log \frac n k + \log  \binom k 2  + k\left\{ -\frac {\epsilon} 2\log\frac n k +1 \right\}  \right) \\
    & = \exp\left(\left[-\frac{k\epsilon} 2 +\frac{2+\epsilon}{k-1} \right]\log \frac n k + \log  \binom k 2  + k   \right) \\
    & \to 0 \mbox{ if } k = o(n) \mbox{ and $k> 3/\epsilon $} \, .
    \qedhere
\end{align*}
\end{proof}

\bigskip 

\begin{corollary}[Impossibility regimes]
\label{cor:uniform_comm_impossibility}
Under the hard-cluster community model, each of the following conditions implies the impossibility of weak detection:
\begin{itemize}
    \item $k=c\log n$ for a constant $c>0$ and a constant $\tau>e^{-2/c}$;

    \item $k=o(\sqrt{n})$ and $\ \tau \ge \exp\!\left(-\frac{(2-\epsilon)\log(n/k^2)}{k-1}\right)$
    for a constant $\epsilon>0$;

    \item $k = \omega(\log n)$ and $\tau \geq 1 - \frac{2(1-\epsilon)}{k-1}\log\left(1 + \frac{\epsilon_n n}{k^2}\right)$ for positive constants $\epsilon$ and $\epsilon_n$ with $\epsilon_n\to 0$; [These conditions imply $\tau \to 1$ so $\tau \geq \exp\left\{ \frac{2}{k-1}\log\left(1 + \frac{\epsilon_n n}{k^2}\right)\right\}$ for sufficiently large $n,$ where $\exp\left\{ \frac{2}{k-1}\log\left(1 + \frac{\epsilon_n n}{k^2}\right)\right\}$ is the value of $\tau$ obtained by setting the
     left hand side of \eqref{eq:uniform_comm_converse_condition} to $\epsilon_n.$]

\end{itemize}
\end{corollary}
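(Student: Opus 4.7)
The plan is to reduce each of the three items to the sufficient condition of Theorem~\ref{thm:hard_cluster_community}(b),
\[
\frac{k^2}{n}\Big((1/\tau)^{(k-1)/2}-1\Big) \to 0,
\]
by substituting the stated lower bound on $\tau$ and verifying that the resulting expression vanishes. In each case $(1/\tau)^{(k-1)/2}$ is by design close to the threshold of the converse condition, so each verification is short.

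For the first item ($k=c\log n$, $\tau > e^{-2/c}$ a constant), I would write $\log(1/\tau) = 2/c - \eta$ with $\eta>0$. Then $(1/\tau)^{(k-1)/2} = n^{1-c\eta/2}\,e^{O(1)}$, and multiplying by $k^2/n = (c\log n)^2/n$ gives $(c\log n)^2\, n^{-c\eta/2}\to 0$, since polynomial decay beats polylogarithmic growth. For the second item, direct substitution of $\tau \geq \exp\!\big(-(2-\epsilon)\log(n/k^2)/(k-1)\big)$ yields $(1/\tau)^{(k-1)/2}\leq (n/k^2)^{1-\epsilon/2}$, so the left-hand side of the converse condition is at most $(k^2/n)^{\epsilon/2}$, which vanishes because $k=o(\sqrt{n})$.

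For the third item, the bracketed hint in the statement supplies most of the argument; the only non-trivial step is the passage from the additive lower bound $\tau \geq 1-(1-\epsilon)A$, where $A \triangleq \frac{2}{k-1}\log(1+\epsilon_n n/k^2)$, to the exponential form $\tau \geq e^{-A}$ that makes Theorem~\ref{thm:hard_cluster_community}(b) directly applicable. Since $k=\omega(\log n)$ and $\log(1+\epsilon_n n/k^2) = O(\log n)$, one has $A\to 0$, so the inequality $1-(1-\epsilon)A \geq e^{-A}$ follows from the Taylor expansion $e^{-A}=1-A+A^2/2+\cdots$ together with $\epsilon A > A^2/2$ for all sufficiently small $A$. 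Once the exponential lower bound is in place, $(1/\tau)^{(k-1)/2} - 1 \leq \epsilon_n n/k^2$, and the converse condition reduces to $\epsilon_n\to 0$. This Taylor-expansion step is the only part of the proof not given by a direct algebraic substitution, and it is the one I would write out most carefully.
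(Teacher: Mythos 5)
Your proof is correct and follows exactly the route the paper intends: substitute each stated lower bound on $\tau$ into the converse condition \eqref{eq:uniform_comm_converse_condition} of Theorem~\ref{thm:hard_cluster_community}(b) and check that the resulting expression vanishes. Your Taylor-expansion step for the third item correctly supplies the detail behind the paper's terse bracketed hint (which, as written there, is missing a minus sign in the exponent --- the threshold value is $\exp\{-\tfrac{2}{k-1}\log(1+\epsilon_n n/k^2)\}$), and your verifications of the first two items are sound.
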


\subsection{Achievability for hard-cluster community model -- interval test}
\label{sec:uniform_community_converse}

Under $H_1$, for the planted pair $(C^*,\Theta^*)$, we have
\(
    X_{ij}\in[\Theta^*,\Theta^*+2\pi\tau]
\)
for all $i,j\in C^*$, $i\neq j$. Hence the test succeeds with $\theta=\Theta^*$ and $C=C^*$, so
\(
    p_{\miss}=0\, .
\)

We now compute the false alarm probability for the interval test with $\ell = 2\pi\tau$. Under $H_0$, the edge labels $(X_{ij})_{1\leq i<j\leq n}$ are i.i.d.\ uniform on $[0,2\pi)$.
For a fixed subset $C\subset[n]$ with $|C|=k$, let
\[
    E_C \,\triangleq\,
    \Big\{\exists\,\theta\in[0,2\pi):\ X_{ij}\in[\theta,\theta+2\pi\tau]\ \ \forall\, i,j\in C,\ i\neq j\Big\}.
\]
By a union bound over all communities,
\begin{align}\label{eq:pfa_union_C}
    p_{\fa} \,=\, \Q\,\Big(\bigcup_{C:|C|=k} E_C\Big)\,\le\, \binom{n}{k}\,\Q(E_C),
\end{align}
where $\Q(E_C)$ does not depend on the particular choice of $C$.

To bound $\Q(E_C)$ while scanning over $\theta$, note that if $E_C$ occurs, then in particular all
\(
\binom{k}{2}
\)
angles $\{X_{ij}: i<j,\ i,j\in C\}$ lie in some arc of length $2\pi\tau$.
Let $e^\star$ denote an edge in $C$ attaining the minimum angle among these $m$ angles.
Then \(E_C\) implies that all remaining \(\binom k 2 - 1\) angles lie in the arc
\([X_{e^\star},X_{e^\star}+2\pi\tau]\).
Taking a union bound over the choice of the minimizing edge yields
$\Q(E_C)\leq \binom k 2 \tau^{\binom k 2 -1}.$
 Therefore,
\begin{align*} 
p_{\fa} & \leq  \binom k 2 \binom{n}{k} \tau^{\binom{k}{2}-1} 
  \leq   \binom k 2  \left( \frac{ne} k \right)^k 
                      \tau^{\binom{k}{2}-1}  \nonumber\\
            & = \exp\left(\log \frac { \binom k 2 } {\tau} + k\left\{ \log\frac n k +1 + \frac{k-1} 2\log \tau \right\}  \right).
\end{align*}
\noindent
This proves Theorem~\ref{thm:hard_cluster_community}(a).

\subsection{Converse for hard-cluster community model}
We bound $\Var_{\Q}(L)$ above, where $L(X)=\frac{\d\P}{\d\Q}(X)$ and
$X=(X_{ij})_{1\le i<j\le n}$. Since $\P$ and $\Q$ are absolutely continuous,
$\TV(\P,\Q) \le \frac{1}{2}\sqrt{\Var_{\Q}(L)}$, so if $\Var_{\Q}(L)\to 0$ (equivalently $\E_{\Q}[L^2]\to 1$),
then $\TV(\P,\Q)\to 0$ and weak detection is impossible.

Averaging over the possible values $(C,\theta)$ of $(C^*,\Theta^*)$ we find
\[
    L(X)=\int_{0}^{2\pi}\frac{1}{\binom{n}{k}}
    \sum_{C\subset[n]:|C|=k} L_{C,\theta}(X)\,\frac{\d\theta}{2\pi},
\]
where
\[
    L_{C,\theta}(X)=
    \left\{
    \begin{array}{cl}
        \tau^{-\binom{k}{2}}
        & \mbox{if } X_{ij}\in[\theta,\theta+2\pi\tau] \mbox{ for all } i,j\in C,\ i<j,\\
        0 & \mbox{otherwise.}
    \end{array}
    \right.
\]
For $C,\theta,C',\theta'$ fixed, let $S=|C\cap C'|$ and
\[
    \delta \triangleq \frac{1}{2\pi}\,\mbox{length}\big([\theta,\theta+2\pi\tau]\cap[\theta',\theta'+2\pi\tau]\big) \, .
\]
Then
\begin{align*}
    \E_{\Q}\!\big[L_{C,\theta}(X)L_{C',\theta'}(X)\big]
    &=
    \frac{1}{\tau^{2\binom{k}{2}}}\, 
    \Q\left\{
    \begin{array}{l}
        X_{ij}\in[\theta,\theta+2\pi\tau]\cap[\theta',\theta'+2\pi\tau]
        \ \text{ for } i,j\in C\cap C',\ i<j,\\
        X_{ij}\in[\theta,\theta+2\pi\tau]
        \ \text{ for } i,j\in C,\ \{i,j\}\not\subset C\cap C',\ i<j,\\
        X_{ij}\in[\theta',\theta'+2\pi\tau]
        \ \text{ for } i,j\in C',\ \{i,j\}\not\subset C\cap C',\ i<j
    \end{array}\right\} \\
    &=
    \frac{\delta^{\binom{S}{2}}\tau^{k(k-1)-S(S-1)}}{\tau^{k(k-1)}}
    = \delta^{\binom{S}{2}}\,\tau^{-2\binom{S}{2}}.
\end{align*}
Therefore
\begin{align*}
    \E_{\Q}[L^2]
    &= \E_{C,C',\Theta,\Theta'}\!\left[\E_{\Q}\big[L_{C,\Theta}(X)L_{C',\Theta'}(X)\big]\right] \\
    &= \E_S\!\left[\E_{\Theta,\Theta'}\!\left[\delta^{\binom{S}{2}}\tau^{-2\binom{S}{2}}\right]\right]
    = \E_S\!\left[\E_{\Theta,\Theta'}\!\left[\left(\frac{\delta}{\tau^2}\right)^{\binom{S}{2}}\right]\right].
\end{align*}
Since the maximum overlap length is $2\pi\tau$, we have $\delta\le \tau$, hence $\delta/\tau^2\le 1/\tau$ and
\begin{align*}
    \E_{\Q}[L^2]
    \leq \E_S \bigg[\bigg(\frac{1}{\tau}\bigg)^{\binom{S}{2}} \, \bigg]
    \, \stepa{=} \, 
    \E_S \bigg[ \left(\frac{1}{\tau}\right)^{\frac{k-1}{2} \times S} \, \bigg] \, ,
\end{align*}
where (a) used $\binom{S}{2}=\frac{S(S-1)}{2} \leq \frac{k-1}{2}\,S$ for $0\leq S\leq k$.

The variable $S$ is distributed as $\mathrm{hypergeometric}(n,k,k)$, which is dominated in the convex order by
$\Binom(k,p)$ with $p=k/n$. Let $z=(1/\tau)^{\frac{k-1}{2}}$. Using that $s\mapsto z^s$ is convex for $z\ge 0$,
\[
    \E_S[z^S]\le \left(1-p+pz\right)^k
    = \left(1+\frac{k}{n}(z-1)\right)^k.
\]
Finally, since $1+x\le e^x$,
\[
    \E_{\Q}[L^2]
    \le \exp\!\left(\frac{k^2}{n}(z-1)\right)
    = \exp\!\left(\frac{k^2}{n}\left(\left(\frac{1}{\tau}\right)^{\frac{k-1}{2}}-1\right)\right).
\]
If \eqref{eq:uniform_comm_converse_condition} holds, the exponent tends to $0$, hence $\E_{\Q}[L^2]\to 1$ and
$\Var_{\Q}(L)=\E_{\Q}[L^2]-1\to 0$, which implies $\TV(\P,\Q)\to 0$ and completes the proof of Theorem~\ref{thm:hard_cluster_community}(b).

\section{The von Mises community model} \label{sec-von-mises-community}

Recall the functions
\[ 
    R(\kappa) = \frac{I_0(2\kappa)}{I_0^2(\kappa)} \, ~~~~ ~~~~ A(\kappa) = \frac{I_1(\kappa)}{I_0(\kappa)} \,,
\]
where $I_0$ and $I_1$ are the modified Bessel functions of the first kind, of order $0$ and $1$ respectively. See Appendix~\ref{app:function_bounds} for definitions and properties of these functions.

\medskip

\begin{theorem} \label{thm:vonmises-community}
Consider the von Mises community model with parameters $(n,k,\kappa).$ 
\begin{itemize}
    \item[$\mathrm{(a)}$] (Achievability)
    The interval test with window length $2\pi\tau$ and threshold $k$ satisfies
    \begin{align}
        p_{\fa} &\leq \exp\bigg(\log \frac { \binom k 2 } {\tau} + k \cdot \bigg\{ \log\frac n k +1 + \frac{k-1} 2\log \tau \bigg\}  \bigg) \, ,
        \label{eq:pfa-vM-community} 
        \\[2pt]
         p_{\miss} & \leq \binom k2 \cdot \frac{\exp\{ (\cos(\pi \tau) - 1) \kappa \}}{2\pi^2 \, I_0(\kappa) \, e^{-\kappa} \cdot \kappa \cdot |\sin(\pi \tau)|} \, .
        \label{eq:pmiss-vM-community}
    \end{align}
    Thus, strong detection is achieved by the interval test if the right hand sides of~\eqref{eq:pfa-vM-community} and~\eqref{eq:pmiss-vM-community} converge to $0$.
    Moreover, the coherence test for any integer $B\geq 3$, $\epsilon \in (0,1)$ and suitable threshold $\beta$ satisfies
    \begin{align} 
        p_\fa & \, \leq \, B \cdot \exp \left\{k\left( \log\left( \frac{ne}{k}\right)-(1-\epsilon/4)^2 \cdot (k-1) \cdot A^2(\kappa) \cdot \cos^2(\pi/B)/2\right) \right\} \, , \label{eq: pfa-vM-community-2}
        \\[2pt]
        p_\miss &  \, \leq  \, \exp\left(- \frac{\epsilon^2 K \, A^2(\kappa)}{32}\right) \, . \label{eq: pmiss-vM-community-3}
    \end{align}
    Thus, strong detection is achieved by the coherence test if the right hand sides of~\eqref{eq: pfa-vM-community-2} and~\eqref{eq: pmiss-vM-community-3} converge to $0$.
    \item[$\mathrm{(b)}$] (Converse)
    Weak detection is impossible if
    \begin{align}
        \label{eq:matrix-necessary-general}
        \frac{k^2}{n}\left(\exp\left(\frac{(k-1)\log R(\kappa)}{2}\right)-1\right)\ \xrightarrow[n\to\infty]{}\ 0 \, .
    \end{align}
\end{itemize}
\end{theorem}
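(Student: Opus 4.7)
The theorem has three essentially independent components: the two interval-test bounds, the two coherence-test bounds, and the second-moment converse. The interval-test bounds and the converse are adaptations of machinery already built in Sections~\ref{sec-hard-cluster-community} and~\ref{sec-von-mises-flat}; the coherence-test bounds require new concentration arguments.

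\textbf{Interval test.} The null law of $(X_{ij})$ is i.i.d.\ uniform on $[0,2\pi)$, identical to that in the hard-cluster community model, so the false-alarm bound~\eqref{eq:pfa-vM-community} is inherited verbatim from Section~\ref{sec:uniform_community_converse}. For the miss probability, centering the test interval at $\Theta^*$ and taking a union bound over the $\binom{k}{2}$ intra-community edges gives
\[
  p_{\miss} \,\leq\, \binom{k}{2}\cdot \P\{\mathrm{vonMises}(0,\kappa) \notin [-\pi\tau,\pi\tau]\}.
\]
To bound the von Mises tail I would apply a Laplace-type estimate to $\int_{\pi\tau}^{\pi} e^{\kappa\cos\theta}\,\d\theta$: the substitution $u = \kappa(\cos(\pi\tau) - \cos\theta)$ extracts the factor $1/(\kappa\sin(\pi\tau))$, and since $e^{\kappa\cos\theta}$ is maximized on $[\pi\tau,\pi]$ at the boundary $\theta = \pi\tau$, the integral is dominated by $e^{\kappa\cos(\pi\tau)}/(\kappa\sin(\pi\tau))$ (the exponentially suppressed contribution near $\theta = \pi$ is negligible). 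Dividing by $2\pi I_0(\kappa)$ yields~\eqref{eq:pmiss-vM-community}.

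\textbf{Coherence test.} For false alarms, fix $C$ with $|C| = k$ and observe that under $\Q$ the complex variables $Z_e = e^{\i X_e}$, $e \in E(C)$, are i.i.d.\ with $\E_\Q[Z_e] = 0$ and $|Z_e| = 1$. I would combine an $\epsilon$-net over directions with Hoeffding: placing $B$ equally spaced angles $\omega_b = 2\pi b/B$ on the unit circle, every direction is within angle $\pi/B$ of some $\omega_b$, so $|\sum_e Z_e| \geq \beta$ implies $\sum_e \cos(X_e - \omega_b) \geq \beta \cos(\pi/B)$ for some $b$. Hoeffding on each of the $B$ bounded mean-zero real sums, followed by union bounds over $B$ directions and $\binom{n}{k} \leq (ne/k)^k$ communities, with threshold $\beta = (1-\epsilon/4)\binom{k}{2} A(\kappa)$, yields~\eqref{eq: pfa-vM-community-2}. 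For the miss bound, under $\P$ the intra-community edges satisfy $\E[Z_e] = A(\kappa)e^{\i\Theta^*}$, so $|\sum_e Z_e| \geq \Re\big(e^{-\i\Theta^*}\sum_e Z_e\big) = \sum_e \cos(X_e - \Theta^*)$ is a sum of $\binom{k}{2}$ i.i.d.\ bounded variables with mean $A(\kappa)$; Hoeffding with deviation $(\epsilon/4)\binom{k}{2} A(\kappa)$ then produces~\eqref{eq: pmiss-vM-community-3}.

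\textbf{Converse.} I would repeat the likelihood-ratio factorization of Section~\ref{sec:proof_vm_flat} with community edges in place of individual observations. For any $(C,\theta)$ and $(C',\theta')$ with $S = |C\cap C'|$, the $\binom{S}{2}$ shared edges contribute by the same computation leading to~\eqref{eq:vm_Eprod}, so
\[
  \E_\Q\big[L_{C,\theta}(X) L_{C',\theta'}(X)\big] \,=\, \rho_\kappa(\vartheta)^{\binom{S}{2}}, \qquad \rho_\kappa(\vartheta) \triangleq \frac{I_0(2\kappa\cos\vartheta)}{I_0^2(\kappa)},
\]
with $\vartheta = (\theta-\theta')/2$. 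Bounding $\rho_\kappa(\vartheta) \leq R(\kappa)$ pointwise, using $\binom{S}{2} \leq (k-1)S/2$ for $0 \leq S \leq k$, and applying convex-order domination of $\mathrm{hypergeometric}(n,k,k)$ by $\Binom(k,k/n)$ to the convex function $s \mapsto R(\kappa)^{(k-1)s/2}$ yields
\[
  \E_\Q[L^2] \,\leq\, \left(1 + \frac{k}{n}\big(R(\kappa)^{(k-1)/2} - 1\big)\right)^{\!k} \,\leq\, \exp\!\left(\frac{k^2}{n}\big(R(\kappa)^{(k-1)/2} - 1\big)\right),
\]
which tends to $1$ under~\eqref{eq:matrix-necessary-general}, forcing $\Var_\Q(L) \to 0$ and $\TV(\P,\Q) \to 0$. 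The least automatic step is the coherence-test false-alarm bound~\eqref{eq: pfa-vM-community-2}: matching the stated per-vertex rate $(1-\epsilon/4)^2(k-1) A^2(\kappa) \cos^2(\pi/B)/2$ requires choosing $B$ so that $\cos^2(\pi/B)$ is close to $1$ while $\log B$ remains absorbable, and a sub-Gaussian constant sharp enough to produce the stated coefficient — possibly a Bernstein refinement exploiting $\Var_\Q(\cos(X_e - \omega)) = 1/2$ under uniform sampling.
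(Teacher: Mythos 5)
Your proposal tracks the paper's proof essentially step for step: the interval-test false-alarm bound is inherited from the hard-cluster community model, the miss bound comes from a union bound over the $\binom k2$ intra-community edges followed by a Laplace-type estimate of the von Mises tail (the paper performs the same estimate via the density of $\cos\Theta$, with the same implicit dismissal of the contribution near $\theta=\pi$); the coherence false-alarm bound uses the same inscribed-$B$-gon net over directions; the coherence miss bound uses the same projection onto $e^{\i\Theta^*}$ and Hoeffding around the mean $A(\kappa)$; and the converse is the same second-moment computation with $\rho_\kappa(\vartheta)\le R(\kappa)$, $\binom S2\le\frac{k-1}2 S$, and hypergeometric-to-binomial convex-order domination.

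The one genuine gap is the constant in the coherence false-alarm bound \eqref{eq: pfa-vM-community-2}, which you correctly flag but do not close. Plain Hoeffding for $K$ i.i.d.\ variables in $[-1,1]$ gives $\Q(S_c\ge s)\le e^{-s^2/(2K)}$, which with $\beta=(1-\epsilon/4)KA(\kappa)$ produces $(1-\epsilon/4)^2(k-1)A^2(\kappa)\cos^2(\pi/B)/4$ in the exponent rather than the stated $/2$; this factor of $2$ propagates to the corollaries (e.g.\ it would force $\kappa\gtrsim\sqrt{16\log n/(k-1)}$ instead of $\sqrt{8\log n/(k-1)}$ when $k=\omega(\log n)$). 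The Bernstein refinement you suggest does not fully repair this: with $\Var_\Q(\cos(X_e-\omega))=1/2$ and $|{\cos}|\le 1$ it yields $\exp\bigl(-s^2/(K+2s/3)\bigr)$, which matches $e^{-s^2/K}$ only when $s/K= A(\kappa)\cos(\pi/B)=o(1)$, and hence fails in the $k=c\log n$ regime where $\kappa$ (and so $A(\kappa)$) is a fixed constant. The paper instead computes the moment generating function exactly, $\E_\Q[e^{\theta\cos X_e}]=I_0(\theta)\le e^{\theta^2/4}$ for all $\theta$ (Appendix~\ref{app:function_bounds}), i.e.\ $\cos(X_e)$ is sub-Gaussian with variance proxy $1/2$ at every scale; optimizing the Chernoff bound then gives $\Q(S_c\ge s)\le e^{-s^2/K}$ and exactly the stated coefficient.
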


\bigskip 

\begin{corollary}[Achievability regimes] \label{cor-vM-community-achievability}
Under the von Mises community model, strong detection is possible in each of the following regimes.
\begin{itemize}
    \item $3/\epsilon < k=o(\log n)$ for some $\epsilon \in(0,1)$ and $\kappa \, \geq\, \Big(\frac{4}{\pi^2}+\epsilon\Big)\,(\log k)\,\Big(\frac{n}{k}\Big)^{\frac{4(1+\epsilon)}{k-1}} \, .$
    [Use the interval test.
        Setting
        \(
            \tau \equiv \tau_\epsilon \triangleq \big({k}/{n}\big)^{\frac{2+\epsilon}{k-1}} 
        \)
        and using the interval test with window length $2\pi\tau$ and subset size $k$, we have from~\eqref{eq:pfa-vM-community} that $p_\fa\to 0$ (see proof of Corollary~\ref{cor:uniform_comm_achievability}). 
        Further, using that $\tau \to 0$, $\kappa \to \infty$, $\cos(\pi \tau)-1 = -\frac{(\pi\tau)^2} 2(1+o(1)),$ $\sin(\pi\tau) = \pi\tau (1+ o(1)),$  and $I_0(\kappa)\sim \frac{e^{\kappa}}{\sqrt{2\pi \kappa}}$, the equation~\eqref{eq:pmiss-vM-community} yields
        \begin{align} \label{eq:pmiss-vM-community-cor}
            \log p_{\miss} \leq (\const) + 2\log k - \frac 1 2 \log (\kappa\tau^2)
            -\frac{\pi^2 \kappa\tau^2(1+o(1))} 2 \, ,
        \end{align}
        Therefore, since
        $\kappa\tau^2 \geq (\frac{4}{\pi^2}+\epsilon)(\log k)\,(\frac{n}{k})^{\frac{2\epsilon}{k-1}}$,
        the last (negative) term in~\eqref{eq:pmiss-vM-community-cor} dominates and $\log p_{\miss}\to -\infty$.
    ]
    \item $k=c\log n$ for some $c>2$ and fixed $\kappa$ with $A^2(\kappa) \, c > 2.$ [Use the coherence test. Take $B$ sufficiently large and $\epsilon$ sufficiently small.]
    \item $k = c\log n$ and $\kappa \geq \frac{2\log k}{1-\cos(\pi e^{-2/c})}$. [Set $\tau = e^{-2/c}$ and use the interval test with window size $2 \pi \tau$. Note $p_{\fa} \to 0$ for this choice of $k$ and $\tau$ (see proof of Corollary~\ref{cor:uniform_comm_achievability}). Using $I_0(\kappa)\sim \frac{e^{\kappa}}{\sqrt{2\pi \kappa}}$ (because $\kappa \to \infty$),~\eqref{eq:pmiss-vM-community} yields
    \(
        \log p_{\miss} \leq (\const) + 2\log k - \frac 1 2 \log (\kappa)
    +(\cos(\pi \tau) - 1)\, \kappa  \to -\infty.
    \)  
    ]
    \item $k = \omega(\log n)$ and $\kappa \geq (1+\epsilon)\sqrt{\frac{8\log n} {k-1}}$ for some $\epsilon$ with $0<\epsilon < 1.$  [Use the coherence test.  The righthand sides of \eqref{eq: pmiss-vM-community-3} and \eqref{eq: pfa-vM-community-2} are monotone decreasing in $\kappa$ so we may assume
    $\kappa = (1+\epsilon)\sqrt{\frac{8\log n} {k-1}}.$  The conditions imply $\kappa\to 0$ so that $A(\kappa) \sim \frac{\kappa} 2$ (see the appendix).  Since $(1-\epsilon/4)(1+\epsilon) \geq 1 + \epsilon/2$ we can select $B$ large enough that $(1-\epsilon/4)(1+\epsilon)\cos \big( \pi/ B \big) \geq 1 + \epsilon / 4.$ Then the exponent in \eqref{eq: pfa-vM-community-2} is less than or equal to $k\left\{ \log\left( \frac{ne} k\right) - (1+\epsilon/4)^2 \log n \right\}$
    so that $p_{\fa}\to 0.$   Also, $K\kappa^2 \to \infty$ so $p_{\miss} \to 0.$]
\end{itemize}
\end{corollary}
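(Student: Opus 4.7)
The plan is to verify the four regimes separately, applying the false alarm and miss bounds \eqref{eq:pfa-vM-community}--\eqref{eq: pmiss-vM-community-3} from Theorem \ref{thm:vonmises-community}(a). The first and third regimes use the interval test, while the second and fourth use the coherence test. For the interval-test regimes, the false alarm bound \eqref{eq:pfa-vM-community} is identical in form to the hard-cluster case, so its convergence to zero is inherited from the computations in the proof of Corollary \ref{cor:uniform_comm_achievability} for the same choice of $\tau$; the work reduces to controlling the miss bound \eqref{eq:pmiss-vM-community}. For the coherence-test regimes, I would choose the block count $B$ large and the tolerance $\epsilon$ small so that $(1-\epsilon/4)^2\cos^2(\pi/B)$ is as close to $1$ as needed, which makes the exponent in \eqref{eq: pfa-vM-community-2} negative with a margin; the miss bound \eqref{eq: pmiss-vM-community-3} then only needs $k A^2(\kappa)\to\infty$.

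For the first regime ($3/\epsilon < k = o(\log n)$), I would set $\tau = (k/n)^{(2+\epsilon)/(k-1)}$, so that $p_{\fa}\to 0$ follows from Corollary \ref{cor:uniform_comm_achievability}. Since the hypothesis on $\kappa$ forces $\kappa\to\infty$ and $\tau\to 0$, the expansions $\cos(\pi\tau)-1 = -\pi^2\tau^2(1+o(1))/2$, $\sin(\pi\tau) = \pi\tau(1+o(1))$, and $I_0(\kappa)\sim e^\kappa/\sqrt{2\pi\kappa}$ substitute into \eqref{eq:pmiss-vM-community} to give
\[
    \log p_{\miss} \,\leq\, \const + 2\log k - \frac{1}{2}\log(\kappa\tau^2) - \frac{\pi^2\kappa\tau^2}{2}(1+o(1)).
\]
The hypothesis $\kappa \geq (4/\pi^2 + \epsilon)(\log k)(n/k)^{4(1+\epsilon)/(k-1)}$ gives $\kappa\tau^2 \geq (4/\pi^2 + \epsilon)\log k$, so the last term dominates $2\log k$ and $p_{\miss}\to 0$. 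The third regime ($k = c\log n$, $\tau = e^{-2/c}$ fixed) is simpler because no expansion of $\cos$ is needed: the Bessel asymptotic together with the hypothesis $\kappa \geq 2\log k/(1-\cos(\pi\tau))$ yields $(\cos(\pi\tau)-1)\kappa \leq -2\log k$, absorbing the $2\log k$ term.

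For the coherence-test regimes, the miss bound requires $k A^2(\kappa)\to\infty$. In the second regime ($k = c\log n$, $c > 2$, $A^2(\kappa) c > 2$) this is immediate; in the fourth ($k = \omega(\log n)$, $\kappa\to 0$), the asymptotic $A(\kappa)\sim \kappa/2$ from Appendix \ref{app:function_bounds} yields $k A^2(\kappa) \gtrsim 2(1+\epsilon)^2\log n$. For $p_{\fa}$, the per-node exponent in \eqref{eq: pfa-vM-community-2} is $\log(ne/k) - (1-\epsilon/4)^2(k-1)A^2(\kappa)\cos^2(\pi/B)/2$. In the second regime, $(k-1)A^2(\kappa)/2 \sim (A^2(\kappa) c/2)\log n$ with $A^2(\kappa) c/2 > 1$, so $B$ large and $\epsilon$ small yield a negative exponent. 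In the fourth, $(k-1)A^2(\kappa)/2 \geq (1+\epsilon)^2\log n$, and $B$ and $\epsilon$ can be chosen so that $(1-\epsilon/4)(1+\epsilon)\cos(\pi/B) \geq 1 + \epsilon/4$, making the per-node exponent at most $\log(ne/k) - (1+\epsilon/4)^2\log n \to -\infty$.

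The main technical obstacle is matching the constants in the interval-test miss bound for the first regime: the factor $4/\pi^2$ in the hypothesis arises because the dominant negative contribution to $\log p_{\miss}$ is $-\pi^2\kappa\tau^2/2$, which must absorb the additive $2\log k$ term, yielding precisely the threshold $\kappa\tau^2 \gtrsim (4/\pi^2)\log k$. The coherence-test regimes are routine once the Bessel-function asymptotics are established, with the main care needed in coordinating $B$ and $\epsilon$ so that the slack factor $(1-\epsilon/4)^2\cos^2(\pi/B)$ does not eat into the margin.
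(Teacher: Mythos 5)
Your proposal follows the paper's argument essentially verbatim: the same choices of $\tau$ in the interval-test regimes, the same reduction of $p_{\fa}$ to the hard-cluster computation of Corollary~\ref{cor:uniform_comm_achievability}, the same expansions of $\cos(\pi\tau)$, $\sin(\pi\tau)$ and $I_0(\kappa)$ in the miss bound, and the same coordination of $B$ and $\epsilon$ (together with $A(\kappa)\sim\kappa/2$) in the coherence-test regimes. The one imprecision is in the first regime, where you weaken the hypothesis to $\kappa\tau^2\geq(4/\pi^2+\epsilon)\log k$: since the regime $3/\epsilon<k=o(\log n)$ permits bounded $k$, this only shows $\log p_{\miss}$ is bounded above, not that it tends to $-\infty$; retaining the full factor, $\kappa\tau^2\geq(4/\pi^2+\epsilon)(\log k)\,(n/k)^{2\epsilon/(k-1)}$, whose second factor diverges precisely when $k$ stays bounded, closes this gap and is what the paper does.
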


\bigskip 

\begin{corollary}[Impossibility regime] \label{cor-vM-community-impossibility}
Under the von Mises community model, each of the following conditions implies the impossibility of weak detection.
Note that they require $\kappa$ to be smaller for larger $k$ and in particular $\kappa\to 0$ for $k = \omega(\log n).$
\begin{itemize}
    \item $k=o(\log n)$ and $\kappa \leq  \left({n}/{k^2}\right)^{\frac{4-\epsilon}{k-1}}$ for some $\epsilon > 0.$
    [
        These conditions imply $\kappa\to\infty$ so $R(\kappa) \sim \sqrt{\pi \kappa}.$ The conditions also imply 
        \(
            \log\frac{k^2} n + \frac{(k-1)\log(\sqrt{\pi \kappa} (1+o(1))) } 2 \to -\infty \, .
        \)
    ]
    \item $k = c\log n$ for some $c>0$ and $c \log R(\kappa) < 2.$  
    [
        This implies $\log\frac{k^2} n + \frac{(k-1)\log(R(\kappa))} 2 \to -\infty.$
    ]
    \item  $k = \omega(\log n)$ and $k\leq n^c$ for some $c$ with $0 < c < 0.5$
    and $\kappa^2 \leq  \frac{4(1-2c-\epsilon) \log n}{k-1}$ for some $\epsilon > 0.$
    [
        These conditions imply $\kappa \to 0$ so that $\log R(\kappa)\sim \frac{\kappa^2} 2$  and they also imply $ \log\frac{k^2} n + \frac{(k-1)\kappa^2(1+o(1))}{4}  \to -\infty .$ 
    ]
\end{itemize}    
\end{corollary}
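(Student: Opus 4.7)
The plan is to verify, in each of the three regimes, the general impossibility criterion
\[
    \frac{k^2}{n}\left(\exp\left(\frac{(k-1)\log R(\kappa)}{2}\right)-1\right) \to 0
\]
of Theorem~\ref{thm:vonmises-community}(b). Since $R(\kappa)$ is increasing in $\kappa$ and the left hand side is monotone in $R(\kappa)$, it suffices in each regime to check the criterion when $\kappa$ is taken at its stated upper bound. Two asymptotic facts drive the analysis, both recorded in Appendix~\ref{app:function_bounds}: $R(\kappa)\sim\sqrt{\pi\kappa}$ as $\kappa\to\infty$ (so $\log R(\kappa)=\tfrac12\log\kappa+\tfrac12\log\pi+o(1)$), and $\log R(\kappa) = \tfrac12\kappa^2(1+o(1))$ as $\kappa\to 0$. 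The bracketed hints in the corollary statement already sketch these reductions; my task is to carry them out.

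For the first regime, $k=o(\log n)$ and $\kappa \le (n/k^2)^{(4-\epsilon)/(k-1)}$, I first observe that this upper bound on $\kappa$ tends to infinity (since $(\log(n/k^2))/(k-1)\to\infty$ when $k=o(\log n)$), so the large-$\kappa$ asymptotic of $R$ applies. Taking logarithms of the hypothesis, $\log\kappa \le \tfrac{4-\epsilon}{k-1}\log(n/k^2)$, and substituting into $\log R(\kappa)\sim\tfrac12\log\kappa$ yields $\tfrac{(k-1)}{2}\log R(\kappa)\le \tfrac{4-\epsilon}{4}\log(n/k^2)+O(k)$. Adding $\log(k^2/n)=-\log(n/k^2)$ gives $-\tfrac{\epsilon}{4}\log(n/k^2)+O(k)\to-\infty$, since $\log(n/k^2)\sim\log n$ and $k=o(\log n)$ absorbs the $O(k)$ term. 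Exponentiating then shows the criterion holds. (In the easier sub-case where $\kappa$ is actually bounded, $R(\kappa)$ is bounded and the $k^2/n$ prefactor decays polynomially in $n$ while $R(\kappa)^{(k-1)/2}$ grows only sub-polynomially, so the criterion holds a fortiori.)

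For the logarithmic regime, $k=c\log n$ and $c\log R(\kappa)<2$, I interpret the strict inequality as $c\log R(\kappa)\le 2-\delta$ for some fixed $\delta>0$. Then
\[
    \frac{(k-1)\log R(\kappa)}{2} \le \frac{(c\log n-1)(2-\delta)}{2c}=(1-\delta/2)\log n+O(1),
\]
so $\tfrac{k^2}{n}\exp(\cdots)\le c^2(\log n)^2\, n^{-\delta/2}\cdot O(1)\to 0$. For the large-$k$ regime, $k=\omega(\log n)$, $k\le n^c$, $c<1/2$, and $\kappa^2\le 4(1-2c-\epsilon)\log n/(k-1)$, the hypotheses force $\kappa^2 = O(\log n/k)=o(1)$, so $\kappa\to 0$ and the small-$\kappa$ asymptotic applies: $\tfrac{(k-1)\log R(\kappa)}{2} = \tfrac{(k-1)\kappa^2}{4}(1+o(1))\le (1-2c-\epsilon)\log n\cdot(1+o(1))$. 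Combined with $\log(k^2/n)\le(2c-1)\log n$, the sum is at most $-\epsilon\log n(1+o(1))\to-\infty$; together with $k^2/n\to 0$ (which disposes of the $-1$ inside the criterion), this verifies the impossibility condition.

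The main subtlety is matching the sharp constants in the three hypotheses on $\kappa$ to the critical exponents emerging from the Bessel-function asymptotics: the factor $4$ in regime 1 arises by combining the $(k-1)$ in the criterion with the $\tfrac12$ from $\log R(\kappa)\sim\tfrac12\log\kappa$; the constant $2$ in regime 2 comes from $k$ multiplying $\log R(\kappa)$ directly against $\log n$ in the $k^2/n$ prefactor; and the factor $4$ in regime 3 comes from $\tfrac{(k-1)\kappa^2}{4}$. In each case the additive $\epsilon$ (or $\delta$) slack in the hypothesis on $\kappa$ produces a matching $\epsilon$-level logarithmic excess decay in the criterion, which is all that is needed to drive the full expression to zero.
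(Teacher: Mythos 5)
Your proposal is correct and follows the paper's own route: each regime is handled by plugging the hypothesis on $\kappa$ into the second-moment criterion \eqref{eq:matrix-necessary-general} of Theorem~\ref{thm:vonmises-community}(b) and using the large-$\kappa$ asymptotic $R(\kappa)\sim\sqrt{\pi\kappa}$, respectively the small-$\kappa$ asymptotic $\log R(\kappa)\sim\kappa^2/2$, exactly as in the bracketed justifications of the corollary. Your treatment is in fact slightly more careful than the paper's sketch in two spots: you reduce to $\kappa$ at its stated upper bound via monotonicity (and treat bounded $\kappa$ separately), rather than asserting that the upper bound on $\kappa$ ``implies $\kappa\to\infty$,'' and you make explicit that $c\log R(\kappa)<2$ must be read as a fixed gap $c\log R(\kappa)\le 2-\delta$ for the second regime's conclusion to hold.
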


\medskip 

\begin{remark}
The first condition of Corollary~\ref{cor-vM-community-impossibility} matches the first sufficient condition for strong recovery in Corollary~\ref{cor-vM-community-achievability}, up to the $\log k$ factor. Moreover, if we strengthen the condition on $k$ a bit more to $k = o\left(\frac{\log n}{\log\log n}\right)$ then $\log k = o\left( \left( \frac n {k^2} \right)^\epsilon   \right)$ in which case the factor $\log k$ can be absorbed into the other term and is thus not meaningful.  
The second condition of Corollary~\ref{cor-vM-community-impossibility} lines up well with the second sufficient condition for strong recovery in Corollary~\ref{cor-vM-community-achievability} -- there is a factor of $2$ difference for $\kappa^2$ for the second condition and large values of $c$. In particular, as $\kappa\to 0$, $\log R(\kappa) \sim \frac{\kappa^2} 2$ and $A^2(\kappa)\sim \frac{\kappa^2} 4.$ Similarly, the third condition of Corollary~\ref{cor-vM-community-impossibility} lines up well with the fourth sufficient condition for strong recovery in Corollary~\ref{cor-vM-community-achievability}; there is again a factor of $2$ difference for the value of $\kappa^2$ when $c$ is close to $0$.
\end{remark}

\subsection{Achievability for von Mises community model -- interval test}

Under $H_0$, the edge variables are i.i.d.\ uniform on $[0,2\pi)$, exactly as in the hard-cluster community model. Therefore, the false-alarm bound
\eqref{eq:pfa-vM-community} is the same as~\eqref{eq:pfa_hardc_comm_interval}, already proven.

The interval detector returns decision $H_1$ if the labels of all intracommunity edges fall into the detection window centered at the true value of $\Theta.$  So $p_{\miss}$ is less than or equal to the probability under $\P$ that at least one of the $\binom {k}{2}$ such labels does not fall into the detection interval centered at the true value of $\Theta.$  So by the union bound,
\begin{align*}
    p_{\miss} \leq \binom {k}{2}(1-p_{\kappa}(\tau)) \, ,
\end{align*}
where $p_{\kappa}$ is defined in Theorem~\ref{thm:vm_flat}.
%

Using the fact that if $\Theta$ is uniformly distributed over $[0,2\pi]$ then $\cos \Theta$ has pdf $\frac 1 {\pi\sqrt{1-t^2}}$ for $t\in [-1,1]$, we find
\begin{align*}
    1-p_{\kappa}(\tau) 
    & \,=\, \frac 1 {2\pi I_0 (\kappa)} \int_{ \left\{\theta\in [0,2\pi] : \,\cos \theta \,\leq\, \cos \left(\pi \tau \right) \right\} }   e^{\kappa \cos \theta} \, \d\theta 
    \\[2pt]
    & \,=\,
    \frac 1 {2\pi^2 I_0(\kappa)} \, \int_{-1}^{\cos(\pi \tau)} \frac{e^{\kappa t}}{\sqrt{1-t^2}} \, \d t 
    \\[2pt]
    & \, \leq \, \frac 1 {2\pi^2 \,  I_0(\kappa) \, \sqrt{1-\cos^2(\pi \tau)}} \, \int_{-\infty}^{\cos(\pi \tau)} e^{\kappa t } \, \d t  \\[2pt]
    & \,=\, \frac{\exp\{ (\cos(\pi \tau) - 1) \kappa \}}{2\pi^2 \, I_0(\kappa) \, e^{-\kappa} \cdot \kappa \cdot |\sin(\pi \tau)|} 
\end{align*}
%
which establishes~\eqref{eq:pmiss-vM-community}. 

\medskip 

\begin{remark} Consider the interval test. If instead of using window size $2\pi \tau_{\epsilon}$ we were to use window size $2\pi\tau_f$
where 
\[
    \tau_f = \left( \frac k {ne} \right)^{\frac{(1+\epsilon)k}{\binom k 2 -1}}
\]
and if
\[
    \kappa \geq \tau_f^{-2}\left(\frac 4 {\pi^2} + \epsilon\right) (\log k)\left(\frac n k \right)^{\frac{2\epsilon}{k-1}}
\]
then strong recovery would be achieved for $k=o(\log n)$ (i.e. the condition $k\geq 3/\epsilon$ would not be needed).
\end{remark}

\subsection{Achievability for von Mises community model -- coherence test}

We bound $p_{\fa}$. By a union bound, we have for any $C\subset [n]$ with $|C|=k,$
\begin{align} \label{eq: pfa-1}
    p_{\fa} \leq \binom n k \cdot  \Q \bigg(  \bigg| \sum_{e\in E(C)} Z_e\bigg| > \beta \bigg).
\end{align}
Let $B$ be an integer with $B\geq 3$ and consider a \emph{$B$-gon} inscribed in the unit circle in the complex plane.  The distance of each side to the origin is $\cos\left( \frac{\pi}{B} \right).$ 
Thus by symmetry and a union bound,
\begin{align}
    p_{\fa}
    & \,\leq\, 
    \binom n k  \cdot  B \cdot \Q\bigg( \Re \bigg( 
    \sum_{e\in E(C)} Z_e  \bigg) > \beta\cos\Big( \frac{\pi}{B} \Big)  \bigg) \nonumber \\
    & \,=\, 
    \binom n k \cdot 
    B \cdot \Q\left( S_c > \beta\cos\Big( \frac{\pi}{B}   \Big) \right) \, ,  \label{eq:B-gon_bnd}
\end{align}
where $S_c \triangleq \sum_{e\in E(C)} \cos(X_e).$ For any $s > \E_{\Q}[S_c] = 0$, with $K=\binom k2,$
\begin{align} \label{eq: pfa-3}
    \Q (S_c \geq s) \,
    \, \stackrel{\mathrm{(A)}}{\leq} \, \inf_{\theta > 0} \, \exp\left\{-\theta \, s + K \log \mathbb{E}\big[e^{\theta \cos(X_{12})} \big] \right\}  \,
    \, \stackrel{\mathrm{(B)}}{\leq} \, \inf_{\theta > 0} e^{-\theta \, s + K \, \frac{\theta^2}{4} } \,
    \, \stackrel{\mathrm{(C)}}{=}    \,  e^{-s^2/K} \, ,
\end{align}
where (A) uses independence and identical distributions of $X_e$ under $\Q$, (B) uses that
\[ 
    \mathbb{E}_{\Q}[e^{\theta \cos(X_i)}] \,=\, \frac{1}{2\pi} \int_0^{2\pi} e^{\theta \cos t} \, \mathrm{d} t 
    \,=\, I_0(\theta)
    \,\leq\, e^{\frac 14 \theta^2},
\]
and (C) uses that $\theta^* = 2s/K$ optimizes the expression. Combining ~\eqref{eq:B-gon_bnd} and~\eqref{eq: pfa-3} yields:
\begin{align}  \label{eq:pfa_bnd_vM_coherence_community}
    p_{\fa} \leq \binom n k \cdot B \cdot \exp\left\{-\frac{\beta^2\cos^2(\pi/B)}{K} \right\} 
    \,\leq\, 
    B \cdot \exp \left\{k\log\left( \frac{ne}{k}\right)-\frac{\beta^2\cos^2(\pi/B)}K \right\} \, .
\end{align} 
Setting $\beta = (1-\frac{\epsilon}{4}) K \, A(\kappa)$ for an arbitrary $\epsilon$ with $0 < \epsilon < 1$ yields~\eqref{eq: pfa-vM-community-2}. 

\medskip

We turn to bounding the miss probability. For any set $E$ of edges,
\[ 
    \left|\sum_{e \in E} Z_e \right| = \max_{\theta} \sum_{e\in E}\cos(X_e - \theta) \geq \sum_{e \in E} \cos(X_e - \Theta^*),
\]
where $\Theta^*$ is the planted angle. Therefore, 
\begin{align} \label{eq: pmiss-vM-community-1}
    p_\miss = \P\bigg(\max_{C: |C| = k}\bigg|\sum_{e \in E(C)} \!\! Z_e  \bigg| \leq \beta \bigg) \leq \P\bigg( \,\bigg| \sum_{e\in E(C^*)} \!\! Z_e \bigg| \leq \beta\bigg) \leq \P \bigg( \sum_{e\in E(C^*)} \!\!\! \cos(X_e - \Theta^*) \leq \beta\bigg) \, ,
\end{align}
where $C^*$ is the planted set. It is easily verified that the random variables $\{ \cos(X_e - \Theta^*)\}_{e \in E(C^*) }$ are i.i.d. and supported on $[-1,1]$, with mean
\[ 
\int_0^{2\pi} \cos(x - \Theta^*) \cdot \frac{\exp(\kappa \, \cos(x - \Theta^*))}{2\pi I_0(\kappa)} \, \d x \stepa{=} \frac{1}{I_0(\kappa)} \cdot \left[\frac{1}{2\pi} \int_0^{2\pi} \cos(y) \, e^{\kappa \cos(y) } \, \d y \right] \stepb{=} \frac{I_1(\kappa)}{I_0(\kappa)} =: A(\kappa),
\]
where (a) uses the change of variable $y = x - \Theta^*$ and (b) uses the definition of the modified Bessel function $I_1(\cdot)$. Applying Hoeffding's inequality yields for any $u>0$,
\begin{align} \label{eq: pmiss-vM-community-2}
    \P\, \bigg(\sum_{e\in E(\mathcal{C}^*)}\cos(X_e-\Theta^*) \leq   K \, A(\kappa)-u\bigg) \, \leq \, \exp\!\bigg(-\frac{u^2}{2K}\bigg).
\end{align}
In particular, combining~\eqref{eq: pmiss-vM-community-1} and~\eqref{eq: pmiss-vM-community-2} and setting $u = \epsilon K\, A(\kappa)/4$ and $\beta = (1-\frac{\epsilon} 4) K \, A(\kappa)$ yields~\eqref{eq: pmiss-vM-community-3}.

\subsection{Converse for the von Mises community model}

Let $K\equiv\binom{k}{2}$. As in the flat model, the conditional likelihood ratio for a fixed candidate $(C,\theta)$ is given by 
\[
    L_{C,\theta}(X)
    = I_0(\kappa)^{-K}\exp\Big\{\kappa\sum_{e \in E(C)}\cos(X_e-\theta)\Big\} \, ,
\]
where $E(C)$ is the set of $\binom{k}{2}$ unordered pairs inside $C$. The (unconditional) likelihood
ratio is the mixture $L(X)=\mathbb{E}_{C,\Theta}[L_{C,\Theta}(X)]$. 

Let $(C,\Theta)$ and $(C',\Theta')$ be independent draws of the planted parameters. Proceed as in the flat model, 
using independence of edges under $\Q$:
\[
    \mathbb{E}_\Q\big[L_{C,\Theta}(X) \, L_{C',\Theta'}(X)~\big| ~C,\Theta, C', \Theta' \big]
    =\bigg[\frac{I_0\big(2\kappa\cos\frac{\Theta-\Theta'}{2}\big)}{I_0^2(\kappa)}\bigg]^{\binom S2} \, ,
\]
where $S \stackrel{\triangle}{=} |C\cap C'|$ and we use the fact that $|E(C)\cap E(C')|=\binom S2$ is the number of overlapping intra community edges. 
Averaging over $(C,\Theta)$ and $(C',\Theta')$ thus yields
\begin{align} \label{eq:matrix-second-moment-exact}
    \mathbb{E}_Q[L^2]
    \,= \, \mathbb{E}_{C,C'}\bigg[ \frac{1}{2\pi}\int_0^{2\pi} \, \Bigg(\frac{I_0\big(2\kappa\cos\frac{\phi}{2}\big)}{I_0^2(\kappa)}\Bigg)^{\binom{S}{2}}\, \d\phi\bigg], 
    ~~~~
    \phi\equiv \Theta-\Theta' \, .
\end{align}
We further upper bound by maximizing over $\phi$. Since $I_0\left(2\kappa\cos\frac {\phi} 2\right) \leq I_0(2\kappa)$ for all $\phi$,
\[
    \mathbb{E}_\Q[L^2]  \, \leq \, \mathbb{E}_{C,C'} \left[\,R(\kappa)^{\binom{S}{2}}\,\right] \, ,
\]
where $R(\kappa) \triangleq \frac{I_0(2\kappa)}{I_0^2(\kappa)}. $ 
The random variable $S$ has the $\mathrm{hypergeometric}(n,k,k)$ distribution. By convex ordering, the $z$--transform of this distribution is dominated by the $z$-transform of the $\mathrm{Binom}(k,k/n)$ distribution:
\[
    \mathbb{E}\big[z^{S}\big]\ \le\ \big(1-p+p z\big)^{k} \, , ~~~~~~ \text{for all } z\geq 0.
\]
Using $\binom{s}{2}\leq \frac{k-1}{2}\,s$ for $0\leq s\leq k$,
\begin{align}
    R(\kappa)^{\binom{S}{2}} 
    \,=\, 
    \exp \left( \big(\log R(\kappa) \big) \cdot \binom{S}{2}\right)
    \leq \exp \Big(\tfrac{(k-1)\log R(\kappa)} 2 S\Big) \, . \label{eq: log-ineq-loose}
\end{align}
Setting $t\equiv \tfrac{(k-1)\log R(\kappa)}{2}\,$ and $z=e^{t}$, we obtain
\[
    \E_\Q[L^2]\ \leq \ \E\big[e^{t S}\big]
    \, \leq \,   \big(1-p+p e^{t}\big)^{k}
    \, \leq \,   \exp\Big(k\,p\,(e^{t}-1)\Big)
    \, = \,      \exp\Big(\frac{k^2}{n}\big(e^{t}-1\big)\Big) \, .
\]
This proves the bound 
\begin{align}
\label{eq:matrix-second-moment-bound}
    \E_\Q[L^2] \,\leq\,  \exp \Big\{\frac{k^2}{n}\Big(\exp\!\big(\tfrac{(k-1)\log R(\kappa)}{2}\big)-1\Big)\Big\}\, .
\end{align}

Therefore, $\E_\Q[L^2]\to 1$ (and so detection is impossible) whenever the exponent in~\eqref{eq:matrix-second-moment-bound} goes to $0$. This is precisely the necessary condition~\eqref{eq:matrix-necessary-general}. This completes the proof of Theorem~\ref{thm:vonmises-community}(b).


\appendix
\section{Bounds and approximations of Bessel functions} \label{app:function_bounds}

This appendix collects bounds and asymptotic values related  to the modified Bessel functions of the first kind, $I_0(\kappa).$
\begin{itemize}

\item  
The power series expansion of $I_0$ and comparison with the power series expansion of $e^x$ evaluated at $x= {\kappa^2}/{4}$ gives, for all $\kappa,$
\begin{align*}
I_0(\kappa) & = \sum_{k=0}^\infty \frac 1 {(k!)^2}\left( \frac{\kappa}{2}  \right)^{2k}  \leq e^{\frac 14 \kappa^2}   
\end{align*}

\item  
Let $\kappa > 1/2.$  For $\Theta$ uniformly distributed over $[0,2\pi]$, the probability density of $\cos \Theta$ is $\frac 1 {\pi\sqrt{1-t^2}}$ for $-1 < t < 1.$ This gives the integral relation (known classically \cite{NIST:DLMF}) and bound:
\begin{align*}
I_0(\kappa) & = \frac 1 {\pi} \int_{-1}^1 \frac{e^{\kappa t}}{\sqrt{1-t^2} } \, \d t  \geq  \frac 1 {\pi} \int_{1 - \frac 1 {\kappa}}^1 \frac{e^{\kappa t}}{\sqrt{1-t^2} } \, \d t 
\end{align*}
Using $1-t^2 \leq 2(1-t)$ and $e^{\kappa t} \geq e^{\kappa}(1-(1-t))\kappa$ over the interval of integration and integrating yields
\begin{align*}
    I_0(\kappa) \geq \frac{2\sqrt{2}}{3\pi}\frac{e^{\kappa}}{\sqrt{\kappa}}
    \geq (0.3) \frac{e^{\kappa}}{\sqrt{\kappa}}.
\end{align*}
This lower bound is close to the large $\kappa$ asymptotics:
\begin{align*}
    I_0(\kappa) \sim \frac{e^{\kappa}}{\sqrt{2\pi \kappa}}
    \approx (0.39894)  \frac{e^{\kappa}}{\sqrt{2\pi \kappa}} \, .
\end{align*}

Accounting for the remainder term, it holds that as $\kappa \to \infty$,
\begin{align} \label{eq: I0-remainder}
    I_0(\kappa) = \frac{e^\kappa}{\sqrt{2\pi \kappa}}\big( 1+ O(\kappa^{-1})\big) \, .
\end{align}

\item  
Let
\begin{align*}
A(\kappa) = 
    \frac{1}{I_0(\kappa)} \cdot \left[\frac{1}{2\pi} \int_0^{2\pi} \cos(y) \, e^{\kappa \cos(y) } \, \d y \right] = \frac{I_1(\kappa)}{I_0(\kappa)}
\end{align*}
A Taylor expansion yields (see also Figure~\ref{fig: A-and-R-v2})
\begin{itemize}
    \item As $\kappa\to 0$, $A(\kappa) = \frac \kappa 2 + O(\kappa^3)$.
    \item As $\kappa\to \infty$, $A(\kappa) \to 1$.
\end{itemize}

\begin{figure}[t]
    \centering
    \includegraphics[width=0.6\linewidth]{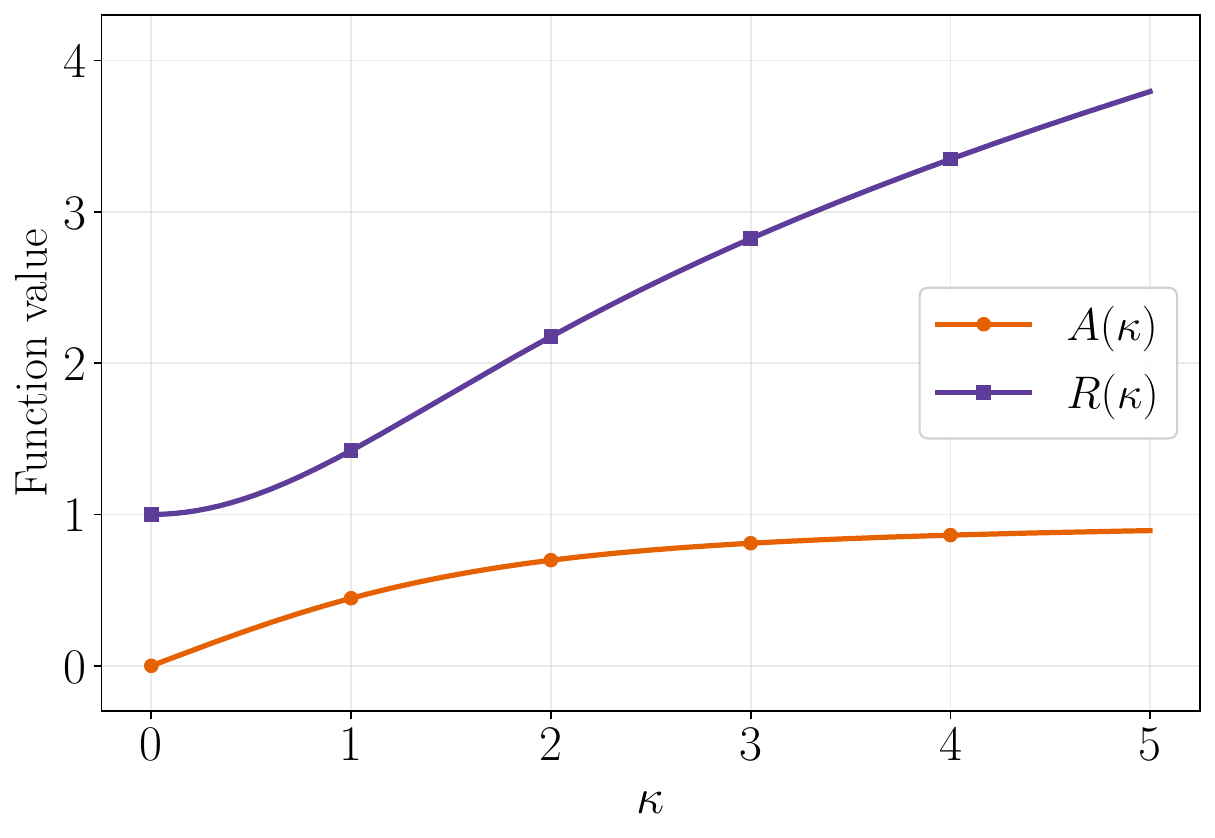}
    \caption{The functions $A(\kappa)$ and $R(\kappa)$}
    \label{fig: A-and-R-v2}
\end{figure}

\item 
Let $\rho_\kappa(\theta) \triangleq {I_0(2\kappa \cos\theta)}/{I_0^2(\kappa)}.$  Lemma \ref{lem:vm_rho_mean_one}    shows that  for every $\kappa > 0$,
$
\frac 1 {2\pi} \int_0^{2\pi} \rho_\kappa(\theta) \,\d\theta = 1.  
$

\item 
Let  $R(\kappa) \triangleq {I_0(2\kappa)}/{I^2_0(\kappa)}.$
The maximum of $\rho_\kappa(\theta)$ over $\theta$ is $R(\kappa) = \rho_\kappa(0).$ The asymptotic behavior of $I_0$ described above implies the following asymptotic behavior of $R.$
\begin{itemize}
    \item As $\kappa\to 0,$ $R(\kappa) = 1 + \frac{\kappa^2} 2 + O(\kappa^4).$
    \item As  $\kappa\to \infty,$ $R(\kappa)  = \sqrt{\pi\kappa} \, \big( 1 + O(\kappa^{-1}) \big) $.
\end{itemize}

\item 
As $\kappa\to 0$,
\begin{align*}
\KL( \mathrm{von Mises}(0,\kappa) || \uniform ([0,2\pi]) 
&= \kappa \frac{I_1(\kappa)}{I_0(\kappa)} - \log I_0(\kappa) \\
&= \frac{\kappa (\kappa/2 + o(\kappa))}{I_0(\kappa)} - \log I_0(\kappa) \\
& = \frac{\kappa^2} 2 - \frac{\kappa^2} 4 + o(\kappa^2) = \frac{\kappa^2} 4 + o(\kappa^2)
\end{align*}

\end{itemize}

\section{Generalized likelihood ratio tests} \label{app: GLRT}

\subsection{The interval test for the hard-cluster flat model}  \label{sec:scan_as_GLRT}
For $\theta\in[0,1]$, define the count
\[
    N(\theta) \triangleq \sum_{i=1}^N \ones{X_i\in [\theta,\theta+ 2\pi\tau] ~~ \mathrm{(mod~1) }},
    ~~~~~~
    M_\tau(X) \triangleq \sup_{\theta\in[0,2\pi]} N(\theta),
\]
i.e. $M_\tau(X)$ is the largest number of points that can be captured in an interval of length $2\pi\tau$. Recall that the interval test above with window $2\pi\tau$ and threshold $\gamma$ is
\[
    \phi_{\tau,\gamma}(X) \triangleq \ones{\,M_\tau(X)\geq \gamma\,},\qquad \gamma\in\{1,\dots,n\}.
\]

\paragraph{Likelihood ratios.}
Recall the conditional likelihood ratio given $(S,\theta)$,
\begin{align}
    L_{S,\theta}(x) 
    =\prod_{i\in S}\frac{\ones{x_i\in I_\theta}}{\tau}
    =\tau^{-K}\,\ones{\,x_i\in I_\theta\ \forall i\in S\,}.
\end{align}
Averaging over $S$ and $\Theta$ gives the (unconditional) likelihood ratio,
\begin{align} \label{eq: LR}
\begin{aligned}
    L(x)& \triangleq \frac{\d P}{\d Q}(x)
    =\frac{1}{\binom{N}{K}\,\tau^K}\int_0^{2\pi}
    \sum_{\substack{S\subset[N]\\ |S|=K}}\mathbf{1}\big\{\,S\subset\{i:x_i\in I_\theta\} \big\}\, \frac{\d\theta}{2\pi}
    \\
    &=\frac{1}{\binom{n}{K}\,\tau^K}\int_0^{2\pi} \binom{N(\theta)}{K}\, \frac{\d\theta}{2\pi}.
\end{aligned}
\end{align}
The last step used that the number of $K$-subsets contained in a set of size $N(\theta)$ is $\binom{N(\theta)}{K}$.  We consider two GLRTs, depending on how we eliminate the parameters $(S,\theta)$, as follows:

\medskip\noindent
\begin{itemize}
    \item[(A)] \textit{Maximize over both $S$ and $\theta$.} In this case,
    \begin{align}
        T_{A}(x) \triangleq \sup_{\theta\in[0,1]}\ \sup_{\substack{S\subset[N]\\|S|=K}} L_{S,\theta}(x)
        \stepa{=}
        \sup_{\theta}\ \tau^{-K}\,\ones{\,N(\theta)\ge K\,}
        \stepb{=}
        \tau^{-K}\,\ones{\,M_\tau(x)\ge K\,},
    \end{align}
    where (a) used that given $\theta$, there exists some index set $S$ of points in $[\theta,\theta+2\pi\tau]$ iff $N(\theta)\geq K$, and (b) uses the definition of $M_\tau(x)$.
    
    Therefore, the decision rule in this case is precisely to compare $T_{A}$ to any $t\in(0,\tau^{-K}]$:
    \[
        \text{reject }H_0\ \Longleftrightarrow\ M_\tau(X)\ge K,
    \]
    i.e. the interval test with threshold $\gamma=K$.
    \item[(B)] \textit{Average over $S$, then maximize over $\theta$.} 
    Following~\eqref{eq: LR}, we have
    \begin{align}
        T_{B}(x) \triangleq \sup_{\theta\in[0,1]} L_\theta(x)
        =
        \frac{1}{\binom{N}{K}\,\tau^{K}}\ \sup_{\theta}\binom{N(\theta)}{K}.
    \end{align}
    Since $m \mapsto \binom{m}{K}$ is strictly increasing for $m\ge K$, thresholding $T_{B}$ is equivalent to thresholding $M_\tau(X)$:
    \begin{align}
    T_{B}(x)>t\ \Longleftrightarrow\ M_\tau(X)\ge \gamma
    \quad\text{for some integer }\gamma\ge K \text{ depending on }t,
    \end{align}
    i.e. the interval test with tunable $\gamma\ge K$.
\end{itemize}

\subsection{The coherence test for the von Mises community model}
\label{sec:coherence_test_vM_community}

Consider the detection problem for the von Mises community model with parameters $n,k, \kappa.$
The log likelihood ratio given $\Theta^*=\theta$ and $C^*=C$ is given by:
\begin{align*}
    \log L_{\theta,C}(X) & = \kappa \sum_{e\in E(C)}  \cos(X_e - \theta)  - K\log(I_0(\kappa))  \\
    & =  \kappa \cos(\theta) \sum_{e\in E(C)}  \cos(X_e) + \kappa \sin(\theta) \sum_{e\in E(C)}  \sin(X_e) - K\log(I_0(\kappa))
\end{align*}
For each edge $e$, let $Z_e = e^{\i X_e} = \cos(X_e) + \i \sin(X_e)$
Maximizing over $\theta$ yields:
\begin{align*}
    \max_{\theta \in [0,2\pi] } \log L_{\theta,S}(X) & = \kappa \bigg|\sum_{e \in E(C)} Z_e \bigg|  - K\log(I_0(\kappa)) 
\end{align*}
If we further maximize over $C$ and select a threshold $\beta$ we arrive at the GLRT test
\[
    \psi_n(\beta) \triangleq \mathbf{1} \left\{ \,\, \max_{\substack{C\subseteq [n] \\ |C| =k } } \,\,\left| \sum_{e\in E(C)} Z_e\right| \,\geq \, \beta \right\}.
\]
This is our {\em coherence test} for the von Mises community model.

\section{On polynomial-time tests} \label{app: poly-time}

The \emph{Rayleigh test}, defined below, has been successfully used in literature for detecting that a sample of observations is drawn from a von Mises distribution versus a uniform distribution. We show that it is not as effective in the planted setting, for the von Mises community model.

Let $Z_{ij} = e^{\i X_{ij}}$, and consider the test statistic
\[
    T_{\text{Rayleigh}} \,\triangleq\, \bigg|\sum_{1\le i < j \le n} Z_{ij}\bigg| \, .
\]
We analyze for threshold $\beta$ the test $\mathbf{1}\{T_{\text{Rayleigh}} \geq \beta\}$. Let $N_E = \binom{n}{2}$ be the total number of edges.

\textit{Bounding false alarm probability.~} Under $H_0$, the $Z_{ij}$ are i.i.d. uniform on the unit circle. Using the concentration bound~\eqref{eq:B-gon_bnd} and~\eqref{eq: pfa-3}:
\begin{equation}
p_{\fa} = \mathbb{Q}\,(T_{\text{Rayleigh}} \geq \beta) \leq 4 \exp\left(-\frac{\beta^2}{2N_E}\right).
\end{equation}

\textit{Bounding the miss probability.~} Under $H_1$, let $\Theta$ be the planted angle and let $m = \binom{k}{2}$. Using the same idea as~\eqref{eq: pmiss-vM-community-1}~and~\eqref{eq: pmiss-vM-community-3}, we have that the expected value of the sum $S = \sum_{i<j} Z_{ij}$ given $\Theta$ is $\mathbb{E}_P[S|\Theta] = m A(\kappa) e^{\i\Theta}$. Let $\mu_1 = m \,  A(\kappa)$.
We lower bound the statistic by projecting onto the true direction $\Theta$:
\[
    T_{\text{Rayleigh}} = |S| \, \geq \,  \Re (S e^{-\i\Theta}) \, =\,  \sum_{i<j} \cos(X_{ij}-\Theta)  \, \triangleq\, S_\Theta \, .
\]
$S_\Theta$ is a sum of $N_E$ independent real random variables bounded in $[-1, 1]$, with mean $\mu_1$. By Hoeffding's inequality:
\[
    p_{\miss} \leq P(S_\Theta < \beta) \leq \exp\left(-\frac{(\mu_1-\beta)^2}{2N_E}\right) \, ,
\]
provided $\beta < \mu_1$.

To ensure both error probabilities vanish, we must be able to select $\beta$ such that $\beta^2 = \omega\big(N_E\big)$ (for $p_{\fa}$) and $\beta < \mu_1$ (for $p_{\miss}$). This requires $\mu_1^2 = \omega \big( N_E \big)$. Choose $\beta = \mu_1/2$, so the total error probability
\[
    p_{\fa} + p_{\miss} \le 5 \exp\left(-\frac{\mu_1^2}{8N_E}\right) = 5 \exp\left(-\frac{m^2 A^2(\kappa)}{8N_E}\right).
\]
Substituting $m\approx k^2/2$ and $N_E\approx n^2/2$, the sufficient condition for this test to succeed is
\[
    k^4 A^2(\kappa) = \omega \big( n^2 \big) \quad \mbox{or equivalently} \quad k^2 A(\kappa) =  \omega \big( n \big) \, .
\]
This test is computationally efficient but requires a larger community size (e.g., $k  = \omega \big(\sqrt{n} \big)$ if $A(\kappa)=\Theta(1)$), compared to the (computationally hard) coherence test.

\section{On the relationship between interval and coherence test} \label{app-relationship-test}

We comment on the relationship between the interval test and the coherence test for the community models. In Theorem~\ref{thm:vonmises-community}, sufficient conditions for strong detection for the von Mises community model are given for $k=o(\log n)$ using the interval test, and for $k=c\log n$ and $k=\omega(\log n)$ using the coherence test.  While those sections are complementary, the estimators themselves are rather similar in the regime of small $k$ (and so large $\kappa$) as we explain in this section.

The statistic used in the coherence test for a given $C\subset [n]$ with $|C|=k$ satisfies
\begin{align*}
    \bigg|\sum_{e\in E(C)}  Z_e  \bigg| = \sum_{e\in E(C)} \cos(X_e-\hat{\Theta}_C)
    ~\mbox{  where  }~ \hat{\Theta}_C = \arg\max_{\theta}  \sum_{e\in E(C)}  \cos(X_e-\theta).
\end{align*}
If the threshold $\beta$ has the form $K-\epsilon$ for some $\epsilon\in (0,1)$ and if the threshold is exceeded:
\begin{align}
  \sum_{e\in E(C)}  \cos(X_e-\hat{\Theta}_C)  \geq K - \epsilon     \label{eq:exceed_threshold}
\end{align}
then $\cos(X_e - \hat{\Theta}_C)  \geq 1 - \epsilon$ for each $e.$ If $\epsilon$ is close to zero then by the Taylor approximation of $\cos$ we have
\begin{align*}
    \bigg|\sum_{e\in E(C)}  Z_e  \bigg|
    & \approx K - \frac 1 2 \sum_{e\in E(C)} |X_e -  \hat{\Theta}_C |^2  \\
    \hat{\Theta}_C &\approx \arg\min_{\theta} \sum_{e\in E(C)} |X_e - \theta|^2
\end{align*}
Therefore, if we define $V_C = \min_{\theta} \frac 1 {K-1} \sum_{e\in E(C)} |X_e-\theta|^2,$ the event \eqref{eq:exceed_threshold} is approximately the same as $V_C \leq \frac{2\epsilon}{K-1}.$  If the values $X_e$ for $e \in E(C)$ are all on the same half of the unit circle then $V_C$ is the usual unbiased estimator of variance for real valued observations. With this motivation we define the {\em variance test} to be
\begin{align*}
    \psi_n(\tau) \triangleq \mathbf{1} \left\{  \min_{\substack{C\subseteq [n] \\ |C| =k } } V_C(X)  \leq \sigma^2  \right\} \, .
\end{align*}
for some threshold $\sigma^2$.

In summary, the coherence test with threshold $\beta = K-\epsilon$ is approximately the same as the variance test with threshold $\sigma^2 = \frac{2\epsilon}{K-1}$.   In turn, the variance test is rather similar to the interval test with threshold $K$ and interval width $\tau = \sigma$ in the sense that the square root of sample variance and the width of the range of $(X_e : e\in E(C))$ are two similar measures of the spread of those random variables.

\section{On knowing~\texorpdfstring{$\Theta^*$}{}} \label{app: knowing-theta}

For the community models, our achievability results do not rely on knowledge of the planted phase $\Theta^*$, while  our converse bounds continue to hold even if $\Theta^*$ is revealed to the decision maker. For the flat models, we can characterize how access to $\Theta^*$ can improve performance by reducing the optimal error probabilities, as quantified in the following proposition.

\medskip

\begin{proposition} \label{prop:unif_flat_detection_known_theta}
Consider the detection problem for the flat uniform model such that
$\Theta^*$ is known to the decision maker.   If $K^2\leq N$ and $\tau = o\left({K^2} /N \right)$ then strong detection is possible.  If $K^2\leq N$ and  $\tau = \omega \left({K^2} / N \right)$ then weak detection is impossible.
\end{proposition}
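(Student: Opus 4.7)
My plan is to reduce to a one-dimensional test on a sufficient statistic, then mirror the Chernoff and second-moment arguments of Section~\ref{sec-hard-cluster-flat}. Since $\Theta^*$ is known, I would rotate the circle and assume $\Theta^* = 0$. The count
\(
    N_0(X) \triangleq \sum_{i=1}^N \ones{X_i \in [0,\, 2\pi\tau]}
\)
is then a sufficient statistic, because the unconditional likelihood ratio (a mixture only over $S^*$) simplifies to $L(X) = \binom{N_0(X)}{K}\big/\big(\binom{N}{K}\,\tau^{K}\big)$, which is monotone in $N_0$. Under $\Q$, $N_0 \sim \Binom(N,\tau)$ with mean $N\tau$; under $\P$, $N_0$ is distributed as $K + \Binom(N-K,\tau)$ with mean $K + (N-K)\tau$. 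The mean gap $K(1-\tau)$ and the common noise scale $\sqrt{N\tau}$ meet precisely at $\tau \asymp K^{2}/N$.

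For \textbf{achievability} under $\tau = o(K^{2}/N)$, I would threshold $N_0$ at $\gamma = \max\big(K,\,\lceil N\tau + K/2\rceil\big)$. Because $N_0 \geq K$ under $H_1$, the case $\gamma = K$ yields $p_{\miss} = 0$, and the complementary case gives $p_{\miss} \leq \exp(-c\,K^{2}/(N\tau)) \to 0$ by a Chernoff lower tail on $\Binom(N-K,\tau)$. For the false alarm, when $K$ is constant the hypothesis forces $N\tau \to 0$ and the union bound $p_{\fa} \leq \binom{N}{K}\tau^{K} \leq (N\tau)^{K}/K!$ suffices; when $K \to \infty$, a Chernoff upper tail on $\Binom(N,\tau)$ yields $p_{\fa} \leq \exp(-c'\,\min(K,\,K^{2}/(N\tau))) \to 0$.

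For the \textbf{converse} under $\tau = \omega(K^{2}/N)$, I would repeat the second-moment computation of Section~\ref{sec-2-converse}, but with $\Theta^*$ held fixed rather than integrated out. Writing $L_S(X) = \tau^{-K}\prod_{i\in S}\ones{X_i \in [0,\,2\pi\tau]}$ and letting $S, S'$ be independent uniform $K$-subsets of $[N]$ with overlap $J = |S\cap S'|$, the identity $\ones{\cdot}^{2} = \ones{\cdot}$ gives
\[
    \E_{\Q}\big[L_S L_{S'}\big] \,=\, \tau^{-2K}\,\tau^{|S\cup S'|} \,=\, \tau^{-J}.
\]
Hence $\E_{\Q}[L^{2}] = \E[\tau^{-J}]$ with $J \sim \mathrm{hypergeometric}(N,K,K)$, and the Hoeffding convex-order step used for~\eqref{eq:varQL2_upper_bnd_unif_a}, now applied to $y \mapsto \tau^{-y}$, yields
\[
    \E_{\Q}[L^{2}] \,\leq\, \Big(1 + \tfrac{K(1-\tau)}{N\tau}\Big)^{K} \,\leq\, \exp\!\Big(\tfrac{K^{2}(1-\tau)}{N\tau}\Big),
\]
whose exponent is $o(1)$ under $\tau = \omega(K^{2}/N)$. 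Therefore $\Var_{\Q}(L) \to 0$ and $\TV(\P,\Q) \to 0$, ruling out weak detection.

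The main obstacle is essentially bookkeeping: the achievability calls for light case analysis to cover bounded and growing $N\tau$, but each subcase reduces to a standard binomial tail bound. The converse is in fact cleaner than its unknown-$\Theta^*$ counterpart in Theorem~\ref{thm:hard-cluster-flat}(b), because fixing $\Theta^*$ eliminates the overlap factor $\E_{\theta,\theta'}[\delta^{j}]$ in~\eqref{eq:EQ_L2_general} and replaces the second-moment bound there by the sharper $\exp(K^{2}/(N\tau))$; this is what moves the detection threshold from $\tau \asymp K^{2}/(N\log N)$ in the unknown case up to $\tau \asymp K^{2}/N$ when $\Theta^*$ is known.
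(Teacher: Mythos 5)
Your proposal is correct and follows essentially the same route as the paper: test by thresholding the count of observations in the known window (the paper's $Y$, your $N_0$) with Chernoff bounds on both tails for achievability, and for the converse run the second-moment method with $\theta=\theta'=0$ so that $\delta=\tau$ and $\E_\Q[L^2]=\E[\tau^{-J}]\leq\exp(K^2(1-\tau)/(N\tau))$ via the Hoeffding convex-order comparison. The only cosmetic differences are your explicit sufficiency/monotone-likelihood-ratio remark (which the paper relegates to a follow-up remark) and your particular choice of threshold $\gamma=\max(K,\lceil N\tau+K/2\rceil)$ in place of the paper's $c_N$-parametrized one.
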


\medskip

\begin{remark}
Comparing Corollary~\ref{cor:flat_uniform_achievability} and Proposition \ref{prop:unif_flat_detection_known_theta} shows that the value of knowing $\Theta$ is that $\tau$ can be larger by a factor $N^{\frac 1 {K-1}}$ for constant $K$ and by a factor $\log N$ for $K=N^{\alpha}$ for $0 < \alpha < 1/2.$
\end{remark}

\medskip

\begin{proof}
Without loss of generality, by symmetry, assume it is known that $\Theta^* = 0.$

(Achievability)
Suppose $K^2\leq N$ and $\tau = o\left({K^2}/ N \right).$
Let $Y$ denote the number of observations that fall into the interval $[0,\tau]$ and consider the test with threshold $\gamma$ based on $Y.$   The error probabilities are given by:
\begin{align*}
    p_{\miss} = P(K + \mbox{binom}(N-K,\tau) < \gamma) ~~\mbox{and}~~ p_{\fa} = P(\mbox{binom}(N,\tau) \geq \gamma).
\end{align*}
Note $p_{\miss}$ here is equal to the upper bound on $p_{\miss}$ used  in Section \ref{sec-2-achievability}.  Hence, we can apply the same upper bound on $p_{\miss}$ as in that section.  Also, $p_{\fa}$ is less than $\Q(B_1)$ which is less than the right hand side of \eqref{eq:pfa_bnd} (for $\gamma \geq 1 + (N-1)\tau$) and the right hand side of \eqref{eq:pfa_unif_up_bound1}, both with the leading factor of $N$ removed. 
If $K$ is constant then $n\tau \to 0$ and we take $\gamma =K$ leading to $p_{\miss}=0$ and $p_{\fa}\to 0$ by \eqref{eq:pfa_unif_up_bound1} with the factor of $N$ removed.
If $K\to\infty$ such that $K^2\leq N$ and $\tau = o(K^2/N)$ then~\eqref{eq:suff_cond_1} and~\eqref{eq:suff_cond_2} with the term $-\log(N)$ removed both hold and together are sufficient to prove $p_{\miss}\to 0$ and $p_{\fa}\to 0.$   

(Converse) Suppose that  $K^2\leq N$ and  $\tau = \omega \left(\frac{K^2} N \right)$. 
We follow the moment method of the proof of Theorem~\ref{thm:hard-cluster-flat}(b).   With it known that $\Theta^*\equiv 0$, only the case $\theta=\theta'=0$ is relevant so the variable $\delta$ is equal to $\tau.$  We thus find:
\begin{align*}
    E_{\Q}[L^2] & =   \sum_{j=0}^K P\{ |S\cap S'| = j\} \,  \tau^{-j}  \leq  \sum_{j=0}^K P\left\{ \mbox{binom}\left( K, \frac K N\right) = j\right\} \, \tau^{-j}  =  \left( 1 - \frac K N + \frac{K}{N\tau}\right)^K  \\
    & \leq \exp\left( \frac{K^2}{N\tau} \right)  \to 1.
\end{align*}
Therefore, weak detection is impossible.
\end{proof}

\medskip

\begin{remark}
    The hypothesis test used in the above proof of Proposition \ref{prop:unif_flat_detection_known_theta} is the maximum likelihood ratio test and is hence Pareto optimal.   So another method of proof of the converse part of the Proposition would be to show $\min_{\gamma} p_{\fa} + p_{\miss} \to 1/2$ for that test.
\end{remark}

\section{Community recovery and connection to stochastic block models} \label{app: SBM}

The community models in this paper differ from stochastic block models for a planted dense community in that the marginal distributions  of the edge labels is the same under both distributions, $\P$ and $\Q.$
However, if a genie were to reveal the value of the parameter $\Theta$ then the community models in this paper fall into the realm of stochastic block models with general edge label distributions $P$ and $Q$ as in    \cite{wu2021planted,hajek2017information}.  Under that model for given $n,k,P,Q$ it is assumed there is a subset $C^*$ of cardinality $k$ drawn uniformly at random from among subsets of $[n]$ of cardinality $k.$   A symmetric $n\times n$ matrix $A$ with zero diagonal is observed such that for all $1\leq i < j \leq n$, $A_{ij}$ are independent and $A_{ij}\sim P$ if $i,j \in C^*$ and $A_{ij}\sim Q$ otherwise.   Necessary and sufficient information theoretic conditions for weak and strong detection are given in \cite{wu2021planted} and for weak and exact recovery are given in \cite{hajek2017information}.  The necessary conditions there translate to necessary conditions for the model here because the genie giving $\Theta$ can be ignored.   For simplicity we assume without loss of generality that $\Theta = 0.$

To apply \cite{hajek2017information} we let $P$ denote the von Mises distribution with parameters $\kappa, 0$ and Q be the uniform distribution on $[0,2\pi].$  
The likelihood ratio is given by $\frac{\d P}{\d Q}(x) = \frac{e^{\kappa\cos(x)}}{I_0(\kappa)}$ which is bounded if $\kappa$ is bounded in which case Assumption 1 of \cite{hajek2017information} holds by Lemma 1 in  \cite{hajek2017information}.   Thus, by Theorem 1 in \cite{hajek2017information} in a regime with $\kappa$ bounded a necessary condition on $\kappa$ for weak recovery is $K\cdot \KL(P||Q)\to \infty$ and $\lim\inf \frac{(K-1)\, \KL(P||Q)}{\log (n/ K)}\geq 2.$
Note that
\begin{align*}
    \KL(P||Q) = \E_P\left[\log \frac{\d P}{\d Q}\right] = \E_P[\kappa \cos(\Theta)] - \log I_0(\kappa) = \kappa \,A(\kappa) - \log I_0(\kappa)
\end{align*}
For $\kappa \to 0$, $\KL(P||Q) \sim \frac{\kappa^2}{4}.$   Thus, if we consider $k$ varying with $n$ such that  $\Omega(\log n) \leq k \leq o(n)$ we find that a necessary condition for weak recovery is
$\lim\inf \frac{(k-1)\kappa^2}{\log (n/k) } \geq 2$, which is satisfied for example if $\kappa^2 \geq \frac{2\log(n/k)}{k}.$
[ We can also try using Theorem 2 of \cite{hajek2017information} to get a necessary condition for exact recovery.]

If $\Theta$ were known we could also apply results from \cite{HajekWuXu_MP_submat18} (related to \cite{deshpande2015finding}) to yield a polynomial time message passing algorithm for weak recovery and, after a cleanup procedure, exact recovery, if $\Omega(\sqrt{n}) \leq k \leq o(n)$ under a signal-to-noise ratio condition (that is not information theoretically tight for weak recovery but might be for exact recovery).   The analysis in those papers is based on the moments of the observations so we change variables and work with the observations $W_{ij}$ given by  $W_{ij}=\sqrt{2}\cos(X_{ij} - \Theta).$  The new observations form a sufficient statistic (assuming $\Theta$ is known) so there is no information theoretic loss.   We have that if $\{i,j\}\not\subset C^*$
then $\E[W_{ij}]=0$ and $\E[W^2_{i,j}]=1$ and if $\{i,j\}\subset C^*$ then $\mu = \E[W_{ij}] = \sqrt{2} A(\kappa) = {\kappa}/{\sqrt{2}} + O(\kappa^3)$ so the message passing algorithm succeeds if $\lambda = \frac{\kappa^2K^2}{2n} > \frac 1 e.$

The message passing algorithm can be adapted to work without knowing $\Theta$ as follows.   Given a large integer $B$ we could run $B$ copies of the message passing algorithm in parallel, where the $b^{th}$ version is applied to the observation matrix $W^{(b)}=\sqrt{2}\cos(X_{ij} + \frac{b}{2\pi}).$   Then for each $b$,  $\mu_b = E[W_{ij}|\Theta^*=\theta] = \cos(\theta - \frac{b}{2\pi})A(\kappa)$  Therefore, $\max_b \mu_b \geq \cos\left(\frac{1}{4\pi B}\right) A(\kappa). $

Thus, if $\lambda$ is fixed with $\lambda > \frac 1 e$ then we select a positive integer $B$ large enough that $\lambda \cos\left(\frac{1}{4\pi B}\right)  > \frac 1 e.  $   Then if $\kappa$ is chosen so that
$\lambda = \frac{\kappa^2 K^2}{2n}$ the combined message passing algorithm can achieve both error probabilities converging to zero -- that is, strong detection in polynomial time.


\bibliographystyle{alpha}
\bibliography{graphical_combined.bib}

\end{document}